\newtheorem{lemma}{Lemma}[section]
\newtheorem{theorem}{Theorem}[section]
\newtheorem{definition}{Definition}[section]
\newtheorem{property}{Property}[section]                 
\begin{document}

\title{\textbf{Detection of bistable structures via the Conley index and applications to biological systems}}

\author{Junbo Jia$^{1,2}$, Pan Yang$^3$, Huaiping Zhu$^4$, Zhen Jin$^5$, Jinqiao Duan$^{6}$, Xinchu Fu$^{7,}$\thanks{Corresponding author. Tel: +86-21-66132664; Fax: +86-21-66133292; Email address: xcfu@shu.edu.cn} \\ \\
	$^1${\small Key Laboratory of Systems Biology, Hangzhou Institute for Advanced Study,}\\
		{\small  University of Chinese Academy of Sciences, Hangzhou 310024, China}\\
	$^2${\small Center for Excellence in Molecular Cell Science, Shanghai Institute of Biochemistry and Cell Biology,}\\
		{\small Chinese Academy of Sciences, Shanghai 200031, China}\\
	$^3${\small School of Mathematical Sciences, Changsha Normal University, Changsha, Hunan 410100, China}\\
	$^4${\small Laboratory of Mathematical Parallel Systems (LAMPS), Department of Mathematics and Statistics,}\\
		{\small York University, Toronto, ON, M3J 1P3, Canada}\\
	$^5${\small  Complex Systems Research Center, Shanxi University, Taiyuan, Shanxi 030051, China}\\
	$^6${\small Department of Applied Mathematics, Illinois Institute of Technology, Chicago, IL 60616, USA}\\
	$^7${\small Department of Mathematics, Shanghai University, Shanghai 200444, China}\\
}

\date{(\today)}


\maketitle

\begin{abstract}

\noindent Bistability is a ubiquitous phenomenon in life sciences. In this paper,
two kinds of bistable structures in dynamical systems are studied: One is two one-point attractors,
another is a one-point attractor accompanied by a cycle attractor. By the Conley index theory,
we prove that there exist other isolated invariant sets besides the two attractors, and also
obtain the possible components and their configuration. Moreover, we find that there is always
a separatrix or cycle separatrix, which separates the two attractors. Finally, the biological meanings
and implications of these structures are given and discussed.

\medskip

\noindent \textbf{Key words:} Bistable structure; Dynamical system; Biological system; Conley index
\end{abstract}

\section{Introduction}

Bistability is a widespread phenomenon in everyday life and life sciences. Here bistability means
that a dynamical system is in one of two stable states, and its state does
not change under a slight disturbance. A simple example of this kind of phenomenon is
the lamp switch, which has two mutually exclusive and stable states, `on' and `off.' The
memory unit in the electronic device, flip-flop circuit, which is always in one of two states,
`0' and `1', is also an example of bistability. Besides, interestingly, there are many examples
of such bistability in life-related fields ranging from microscopic gene expression to
macroscopic species competition. For example, at the molecular level, whether or not genes
in cells are expressed~\cite{Griffith1971}, the switching of different genes~\cite{tian2004bistability},
and the differentiation of cells in developmental epistemology~\cite{ferrell2012bistability},
can be considered as bistable phenomena. There are also many examples at the population level,
such as the outbreak of disease~\cite{Zhou2012}, the number of single species~\cite{Ludwig1978,Ludwig1997},
the interaction between different species~\cite{Freedman1986}, including competition,
symbiosis, predation.

The study of bistability is of considerable significance in decoding the mysteries of life
because this phenomenon can be served as a commonality shared by many life-related fields.
On the one hand, bistability helps to understand how the biological object switches between
different stable states so that we can assess conditions towards the desired state.
On the other hand, bistability is a bridge between monostability and multistability. It can
be regarded as a building block for the study of more complicated structures and dynamics.

Stable states may be described by attractors in dynamical systems. Typically, an attractor
refers to the whole asymptotically stable orbit in the dynamical system. So, as its name suggests,
the orbits neighboring the attractor will be attracted and will finally approach the attractor as
time goes on. Some asymptotically stable orbits, such as asymptotically stable equilibria,
asymptotically stable periodic orbits, and strange attractors, are attractors. However,
sometimes attractors also refer to the attracting invariant sets that are composed of whole
orbits. To avoid ambiguity, the attractor in this paper adopts the previous definition,
and the one-point attractor and the cycle attractor refer to the asymptotically stable equilibrium
and the asymptotically stable periodic orbit, respectively.

In this paper, two classes of dynamical systems with precisely two attractors are studied.
The first is the system with two one-point attractors, and the second is that with a one-point
attractor and a cycle attractor. They are the two most common bistable structures in
biological fields. With the help of the Conley index, we find that there exists another
invariant set except for these two attractors, and also obtained its Conley index. We also get
the possible compositions of this invariant set and their connecting structure with the
attractors. Moreover, we find that there is always a separatrix or cycle separatrix between two
attractors, which divide the region we considered into two sub-regions so that almost all the
orbits in different subregion flow to different attractors.

Conley index, which is named after Charles Conley, is the significant generalization of Morse
index~\cite{Conley1978,Smoller2012,Mischaikow2002}. In Morse index theory, the study
object is the hyperbolic equilibrium, and its Morse index is defined as the dimension of the
unstable manifold. However, in the Conley index theory, a hyperbolic equilibrium is
generalized as an isolated invariant set. The primary strategy of the Conley index is to find
a neighborhood that isolates the invariant set, then deduce the dynamical structure or the
properties inside by examining the behavior of flow at the boundary.

Conley index can be used as a topological tool for the studying of a dynamical system~\cite{Conley1978,Mischaikow2002}.
One characteristic of this theory is that it captures the stable property of a dynamical system.
In other words, according to its Continuation Property \ref{th-3} listed in Appendix \ref{sec-appendix},
even though the isolated invariant set can bifurcate to a different
structure, its Conley index would stay the same. So sometimes we cannot get the specific
structure of the invariant set only from its Conley index, but we can exclude some structure.
However, if we want to get a more refined structure about this invariant set, some additional
information becomes indispensable, which is the research idea of this paper.

The rest of the paper is organized as follows. In Section~\ref{sec-2}, two kinds of bistable
structures are studied, and the main results are presented as well. In Section~\ref{sec-3},
we present four examples and two applications that exactly have a bistable structure. Finally,
the conclusion and further discussion are given in Section~\ref{sec-4}.

\section{Main results}
\label{sec-2}

The research object of this paper is the bistable structure in a dynamical system, which can be
written as follows:
\begin{equation}\label{eq-2-1}
	\frac{d\mathbf{x}(t)}{dt}=\mathbf{f(x}(t)).
\end{equation}
Here $\mathbf{x}(t)$ is the state vector in $\mathbb{R}^n$, and the
vector-valued function $\mathbf{f(x}(t))$ is differentiable in its domain. In this section, we
consider this system two cases: two one-point attractors and coexistence of a cycle attractor and
a one-point attractor, since they are the two most common bistable structures in biological models.

\subsection{Case 1: Two one-point attractors}
\label{sec-2-1}

In this case, we first make the following assumption:

\begin{description}
	\item[\textbf{H1}] There exists a bounded and closed region $U$ in the domain of system (\ref{eq-2-1}), such that
	there are precisely two asymptotically stable equilibria in the interior of $U$, say $A_1$ and
	$A_2$, and on the boundary $\partial U$, all orbits run from outside into interior immediately.
\end{description}

Usually, researchers only care about the behaviors within some regions or the structure they
are interested in. For example, for biological models, only the quadrants where all variables
are non-negative are focused, since variables usually represent the number of population or
concentration, and only the positive quantities make sense. Sometimes whether the model
has a periodic orbit is concerned as well because this orbit can cause the variable to oscillate.
However, in this paper, we are only interested in the structure in assumption \textbf{H1},
and there are three main reasons. First, $U$ is an attracting region from the outside view of it,
though there are two stable equilibria within $U$. That makes it possible to bifurcate between
monostable and this kind of bistable structure. Second, the region with the bistable structure
as a whole can be considered as one of the attractors in another bistable structure. That is,
a bistable structure can be embedded into another bistable structure. As a result, we may
be able to use the bistable structure as the cornerstone to study the multi-stable structure.
Third, the whole region $U$ is attracting, which is based on the fact that, basically, the
number of biological objects can neither become negative nor increase indefinitely due to
limited environmental carrying capacity, such as species population, and densities of cells or
microorganism.

The main results are presented below.

\begin{theorem}\label{th-2-1}
	For system (\ref{eq-2-1}), if \textbf{H1} is satisfied, then we have the following conclusions:
	\par (i) There exists other isolated invariant set, denoted by $S$, besides of $A_1$ and $A_2$,
	and its Conley index is $\Sigma^1$.
	\par (ii) There also exists connecting orbits from the invariant set $S$ to attractors, $A_1$ and
	$A_2$.
\end{theorem}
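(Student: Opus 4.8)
The plan is to treat $U$ as an isolating neighborhood and to produce $S$ as the dual repeller of the union of the two sinks, reading off its index from the attractor--repeller long exact sequence. First I would record the two elementary index computations that drive everything. Each asymptotically stable equilibrium $A_i$ admits an arbitrarily small contractible isolating neighborhood with empty exit set, so $h(A_1)=h(A_2)=\Sigma^0$. Moreover, because \textbf{H1} forces the flow strictly inward along $\partial U$, the exit set of $U$ is empty and $\mathrm{Inv}(U)$ is the maximal attractor in $U$, whose Conley index is the pointed homotopy type of $U$ with an adjoined basepoint. Since $U$ is connected and contractible (as it is in every application considered here), this gives $h(\mathrm{Inv}(U))=\Sigma^0$; in homological terms $CH_0(\mathrm{Inv}(U))\cong\mathbb{Z}$ and $CH_k=0$ for $k\ge 1$.

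For part (i), set $A=A_1\cup A_2$. As a union of two attractors, $A$ is itself an attractor in $\mathrm{Inv}(U)$, and by the summation property $h(A)=\Sigma^0\vee\Sigma^0$, so $CH_0(A)\cong\mathbb{Z}^2$ and $CH_k(A)=0$ for $k\ge1$. Let $S=A^{*}$ be its dual repeller. If $S$ were empty then $A=\mathrm{Inv}(U)$, which is impossible since $\Sigma^0\vee\Sigma^0\neq\Sigma^0$; hence $S$ is a nonempty isolated invariant set disjoint from $A_1$ and $A_2$. Its index comes from the long exact sequence of the attractor--repeller pair $(A,S)$,
\begin{equation*}
\cdots\to CH_k(A)\to CH_k(\mathrm{Inv}(U))\to CH_k(S)\to CH_{k-1}(A)\to\cdots,
\end{equation*}
which collapses to $0\to CH_1(S)\to \mathbb{Z}^2\xrightarrow{\iota_*}\mathbb{Z}\to CH_0(S)\to 0$ with all higher groups zero. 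The inclusion-induced map $\iota_*$ sends each generator (the piece containing $A_1$, resp.\ $A_2$) to the single generator of $CH_0(\mathrm{Inv}(U))$, hence is onto; therefore $CH_0(S)=0$ and $CH_1(S)\cong\ker\iota_*\cong\mathbb{Z}$, i.e.\ the Conley index of $S$ is $\Sigma^1$.

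For part (ii), I would exploit the defining property of the dual repeller: every $x\in\mathrm{Inv}(U)\setminus(A\cup S)$ satisfies $\alpha(x)\subseteq S$ and $\omega(x)\subseteq A$, so each such orbit is a connection from $S$ to $A$. This connecting region is nonempty, for otherwise $\mathrm{Inv}(U)=A\sqcup S$ and summation would force $CH_0\cong\mathbb{Z}^2$, contradicting $CH_0(\mathrm{Inv}(U))\cong\mathbb{Z}$. Because the $\omega$-limit of a single orbit is connected, it lands entirely in $A_1$ or entirely in $A_2$; to obtain connections to both I argue by contradiction. If no orbit connected $S$ to $A_2$, then no point outside $A_2$ would have its $\omega$-limit in $A_2$, so the basin of $A_2$ in $\mathrm{Inv}(U)$ is $A_2$ itself; being open (as a basin) and closed (as a compact invariant point), it is a whole component, and splitting it off via summation again yields $CH_0\cong\mathbb{Z}^2$, a contradiction. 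The symmetric argument rules out the absence of connections to $A_1$, completing (ii).

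The main obstacle I anticipate is twofold. The conceptually delicate point is the passage from the homological computation to the genuine homotopy index $\Sigma^1$, together with the justification that $h(\mathrm{Inv}(U))=\Sigma^0$: the latter rests on $U$ being contractible, which is not literally part of \textbf{H1} but does hold in the intended convex, biologically meaningful regions, so I would either add this hypothesis explicitly or state the conclusion at the level of the homological Conley index $CH_*$. A secondary technical point is verifying that $\iota_*$ is surjective and that $\ker\iota_*\cong\mathbb{Z}$, which depends on the component structure of $\mathrm{Inv}(U)$; this is precisely where the connectedness encoded in the index $\Sigma^0$ is used.
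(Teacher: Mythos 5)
Your part (i) is a correct alternative route: where the paper builds the explicit index pair $(N,N_0)$ with $N=U\setminus\{U_1\sqcup U_2\}$, reads off $h=\Sigma^1$ by coning the exit set, and invokes the Wa\.zewski property, you realize $S$ as the dual repeller of $A_1\cup A_2$ and extract its index from the attractor--repeller exact sequence. Both computations rest on the same unstated hypothesis that $U$ is connected and contractible (the paper's Fig.~1 tacitly assumes a disk), and you are right to flag it; the only residual weakness on your side is that the exact sequence delivers $CH_*$ rather than the homotopy index, whereas the paper's explicit quotient $N/N_0$ gives $\Sigma^1$ on the nose.

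Part (ii), however, has a genuine gap. Your contradiction for ``$S$ connects to $A_2$'' assumes that if $A_2$ splits off as a clopen piece of $\mathrm{Inv}(U)$, then the summation property forces $CH_0(\mathrm{Inv}(U))\cong\mathbb{Z}^2$. That presupposes $CH_0\bigl(\mathrm{Inv}(U)\setminus A_2\bigr)\cong\mathbb{Z}$, which is false: the complementary piece is $A_1\cup S\cup C(S,A_1)$, an attractor--repeller pair whose total Conley index is $\bar{0}$ (this is exactly the paper's computation for the index pair $(U\setminus U_2,\partial U_2)$: an annulus with its inner boundary collapsed is contractible). So the hypothetical decomposition gives $h(\mathrm{Inv}(U))=\bar{0}\vee\Sigma^0=\Sigma^0$, which is perfectly consistent, and no contradiction appears. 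The underlying issue is that the global index of $\mathrm{Inv}(U)$, together with the indices of $A_1$, $A_2$, $S$, cannot distinguish ``$S$ connects to both attractors'' from ``$S$ connects only to $A_1$''; $CH_0$ of an isolated invariant set does not count its connected components once the exit set is nonempty. The missing idea is localization: excise a neighborhood of one attractor at a time, observe that the remaining isolated invariant set (containing only $A_1$ and $S$, or only $A_2$ and $S$) has index $\bar{0}$, and then apply the wedge-sum property there, since $\bar{0}\neq\Sigma^0\vee\Sigma^1$. Equivalently, run your attractor--repeller exact sequence for the pair $(A_1,S)$ inside $\mathrm{Inv}(U\setminus U_2)$ rather than for $(A_1\cup A_2,S)$ inside $\mathrm{Inv}(U)$. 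Your first step in (ii) --- that the connecting region to $A_1\cup A_2$ as a whole is nonempty --- is fine; it is the attribution to each attractor separately that fails.
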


\begin{proof}
	(i) Firstly, let us construct an isolating neighborhood for invariant set $S$. Since $A_i, i=1, 2$,
	is asymptotically stable equilibrium, there must exist two small open neighborhood $U_1$ and
	$U_2$ which are disjoint in $\mathrm{Int}(U)$, i.e., the interior of $U$, such that $A_i\in U_i$
	and all orbits that pass through the boundary of $U_i$ will gradually approach the equilibrium
	$A_i$ as time goes to infinity, $t\rightarrow \infty$. Then the region
	$N=U\setminus \{U_1\sqcup U_2\}$ is an isolating neighborhood, which is shown in
	Fig.~\ref{figure-1}.
	
	It is not difficult to find that the exit set $N_0$ of isolating neighborhood $N$ is the boundary
	of $U_1$ and $U_2$, namely, $N_0=\partial U_1 \sqcup \partial U_2$. Then by Definition \ref{def-1}
	of the index pair in Appendix~\ref{sec-appendix}, we can easily verify that $(N,N_0)$ is an index
	pair. By forming the cone over the exit set $N_0$, we can obtain that the Conley index of index
	pair $(N,N_0)$ is $\Sigma^1$ (see also Example 3 in~\cite{Kappos1996}). According to the
	Wa\.zewski Property~\ref{th-1} listed in Appendix~\ref{sec-appendix}, the
	interior of isolating neighborhood $N$ must contain isolated invariant set $S$.
	
	(ii) If we dig $U_2$ out from region $U$, then the remaining region and the corresponding exit set can
	be constructed as an index pair $(U\setminus U_2,\partial U_2)$, and its Conley index is
	$\bar{0}$. According to the conclusion (i) above, we know that the attractor $A_1$ and
	invariant set $S$ are contained in $U\setminus U_2$. Thus by Wedge Sum Property~\ref{th-2} listed
	in Appendix~\ref{sec-appendix}, there must exist connecting orbit from $S$ to attractor $A_1$.
	Otherwise, their Conley index will not $\bar{0}$ but $\Sigma^1 \vee \Sigma^0$, which is a
	contradiction. Similarly, there must be connecting orbit from $S$ to the attractor $A_2$.
	So the proof is completed.
\end{proof}

\begin{figure}[tbhp]
	\centering
	\includegraphics[width=0.5\textwidth]{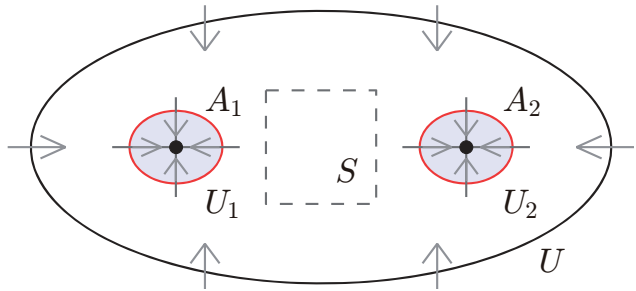}
	\caption{Diagram of the bistable structure. Only $A_1$ and $A_2$ are the two
		asymptotically stable equilibria in the interior of $U$. $U_1$ and $U_2$, which are disjoint
		in $\mathrm{Int}(U)$, are the open neighborhoods of $A_1$ and $A_2$, respectively.}
	\label{figure-1}
\end{figure}

As mentioned in the introduction, more additional information would be necessary if one
wants to get more details about an invariant set. So some of the prerequisites or limitations of
the following study are based on this idea.

To begin with, if it is a one-dimensional system and all equilibria are hyperbolic, i.e., the
derivative of function $f$ evaluated at each equilibrium is not zero, then the invariant set
$S$ is an asymptotically unstable equilibrium. In this case, the bistable structure is relatively
simple, and it must be an unstable equilibrium between two stable equilibria.

Then, if (\ref{eq-2-1}) is a two-dimensional system, we make the following assumption:

\begin{description}
	\item[\textbf{H2}] The dynamical system is structurally stable.
\end{description}

This assumption is mainly based on the fact that most biological objects, such as human
metabolism or species population, are insensitive to small disturbances in the environment
unless some massive changes are encountered. Corresponding to the mathematical model,
it means that the dynamical behavior of the system is unaffected by small perturbations.
Namely, the system is structurally stable, as we assumed above.

Before giving detailed results, we present the following lemma.

\begin{lemma}\label{lemma-2-1-finite}
	For the $n$-dimensional system (\ref{eq-2-1}), there are at most a finite number of hyperbolic
	equilibria within a bounded and closed region $D\subset \mathbb{R}^n$ in the domain of the system.
\end{lemma}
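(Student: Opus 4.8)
The plan is to prove Lemma 2.1 by contradiction, exploiting the compactness of $D$ together with the fact that hyperbolic equilibria are isolated. The statement asserts that within a bounded closed region $D \subset \mathbb{R}^n$, the system $\frac{d\mathbf{x}}{dt} = \mathbf{f}(\mathbf{x})$ can have only finitely many hyperbolic equilibria. First I would recall what hyperbolicity buys us locally: an equilibrium $\mathbf{x}^*$ is hyperbolic when the Jacobian $D\mathbf{f}(\mathbf{x}^*)$ has no eigenvalue with zero real part, and in particular $D\mathbf{f}(\mathbf{x}^*)$ is invertible. The key local consequence is that hyperbolic equilibria are \emph{isolated}: there is a neighborhood of $\mathbf{x}^*$ containing no other equilibrium.

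The main step is to establish this isolation rigorously, and I expect this to be the principal obstacle, since everything else is a soft compactness argument. The cleanest route is the Inverse Function Theorem: because $\mathbf{f}$ is differentiable and $D\mathbf{f}(\mathbf{x}^*)$ is nonsingular (an immediate consequence of hyperbolicity, as no eigenvalue is zero), $\mathbf{f}$ is a local diffeomorphism near $\mathbf{x}^*$, hence locally injective. Since $\mathbf{f}(\mathbf{x}^*) = \mathbf{0}$ and $\mathbf{f}$ is one-to-one on a neighborhood $V$ of $\mathbf{x}^*$, no other point of $V$ can map to $\mathbf{0}$, so $\mathbf{x}^*$ is the unique equilibrium in $V$. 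One subtlety to address is whether $\mathbf{f}$ is merely differentiable or continuously differentiable: the standard Inverse Function Theorem requires $C^1$. If only pointwise differentiability is assumed, I would instead argue directly from the definition of the derivative, writing $\mathbf{f}(\mathbf{x}^* + \mathbf{h}) = D\mathbf{f}(\mathbf{x}^*)\mathbf{h} + o(\|\mathbf{h}\|)$ and using that $\|D\mathbf{f}(\mathbf{x}^*)\mathbf{h}\| \geq c\|\mathbf{h}\|$ for some $c > 0$ by invertibility, so that $\mathbf{f}(\mathbf{x}^* + \mathbf{h}) \neq \mathbf{0}$ for small nonzero $\mathbf{h}$; this gives isolation without the full $C^1$ hypothesis.

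With isolation in hand, the finiteness argument is the standard compactness wrap-up. Suppose for contradiction that $D$ contains infinitely many hyperbolic equilibria, and let $\{\mathbf{x}_k\}$ be an infinite sequence of distinct such points. Since $D$ is bounded and closed, hence compact, by the Bolzano--Weierstrass theorem a subsequence converges to some limit $\mathbf{x}_\infty \in D$. By continuity of $\mathbf{f}$, the limit satisfies $\mathbf{f}(\mathbf{x}_\infty) = \mathbf{0}$, so $\mathbf{x}_\infty$ is itself an equilibrium, and every neighborhood of $\mathbf{x}_\infty$ contains infinitely many of the $\mathbf{x}_k$. The remaining delicate point is that $\mathbf{x}_\infty$ need not a priori be hyperbolic, so I cannot directly invoke its isolation. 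I would handle this by noting that the accumulation of equilibria at $\mathbf{x}_\infty$ forces $\mathbf{x}_\infty$ to be a non-hyperbolic (degenerate) equilibrium, which is the genuine content: if one strengthens the hypothesis so that \emph{all} equilibria in $D$ are hyperbolic, then isolation applies to $\mathbf{x}_\infty$ as well, producing a neighborhood of $\mathbf{x}_\infty$ with no other equilibrium and contradicting the accumulation. This contradiction establishes that the set of hyperbolic equilibria in $D$ is finite.

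A cleaner packaging, which I would prefer if space allows, avoids singling out $\mathbf{x}_\infty$: cover $D$ by the isolating neighborhoods guaranteed above (one per equilibrium), note that by isolation each such neighborhood contains exactly one equilibrium, and extract a finite subcover by compactness; since finitely many neighborhoods each contain at most one equilibrium, there are finitely many equilibria in total. This phrasing sidesteps the limit-point subtlety entirely, provided the open isolating neighborhoods together with the equilibrium-free region form an open cover of the compact set $D$, which they do once every equilibrium is hyperbolic and hence isolated.
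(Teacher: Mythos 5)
Your proposal is correct and follows the same overall skeleton as the paper's proof: assume infinitely many equilibria, extract a convergent subsequence by Bolzano--Weierstrass, and derive a contradiction from the local isolation of a hyperbolic equilibrium. The difference lies in how the isolation step is justified and in your handling of a subtlety the paper glosses over. The paper obtains local uniqueness of the equilibrium near the limit point $e^*$ via the Hartman--Grobman theorem (the flow is locally conjugate to its linearization, which has a unique equilibrium); you instead use the Inverse Function Theorem, or the bare estimate $\|\mathbf{f}(\mathbf{x}^*+\mathbf{h})\| \geq c\|\mathbf{h}\| - o(\|\mathbf{h}\|)$, which is more elementary and avoids invoking a conjugacy theorem where only nondegeneracy of the Jacobian is needed. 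More importantly, you correctly flag that the limit point $\mathbf{x}_\infty$ need not itself be hyperbolic, so its isolation cannot be invoked without further argument; the paper silently assumes this away by asserting that the set $E$ of hyperbolic equilibria is sequentially compact (which requires $E$ to be closed in $D$, not automatic) and then applying Hartman--Grobman at $e^*$. As stated, the lemma is in fact false without the strengthening you propose: in one dimension, $f(x)=x^3\sin(1/x)$, $f(0)=0$, is $C^1$ on $[-1,1]$ and has infinitely many hyperbolic zeros $x_n=1/(n\pi)$ accumulating at the degenerate zero $0$. Your repair --- assume all equilibria in $D$ are hyperbolic, so that $\mathbf{x}_\infty$ is also isolated --- is exactly what is needed, and it is harmless for the paper since every invocation of the lemma occurs under hypotheses (\textbf{H2} with the Andronov--Pontryagin criterion, or \textbf{H2$^\prime$}) guaranteeing that all equilibria in the region are hyperbolic. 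Your alternative covering argument is an equally valid packaging under that same strengthened hypothesis.
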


\begin{proof}
	We prove this lemma by contradiction. We first assume that there are an infinite number of
	hyperbolic equilibria within the bounded and closed region $D$. By Bolzano-Weierstrass Theorem
	(sometimes also called Sequential
	compactness theorem) we know that the subset consisting of all possible hyperbolic equilibria,
	denoted by $E$, is sequentially compact due to the compactness of $D$. That is, for each
	sequence of points in $E$ it has a convergent subsequence converging to a point in $E$. For example,
	for sequence $\{e_n\}_{n=1}^{\infty}\subset E$, its subsequence $\{e_{n_k}\}_{k=1}^{\infty}$ converge
	to point $e^*\in E$. In other words, for all $\varepsilon>0$ there exists an $N\in \mathbb{N}^+$
	such that when $k>N$ we have $|e_{n_k}-e^*|<\varepsilon$.
	
	On the other hand, by Hartman-Grobman Theorem, we know that there exists a small neighborhood $U^*$
	of $e^*$, such that the solutions to system (\ref{eq-2-1}) are homeomorphic to that
	of linearization of (\ref{eq-2-1}) at $e^*$ as long as it is inside $U^*$. System (\ref{eq-2-1})
	have the unique equilibrium $e^*$ inside $U^*$, due to the uniqueness of the equilibrium to linearization.
	That is, there exists $\varepsilon_1>0$, such that for any other equilibrium $e$ if it exists we have
	$|e-e^*|>\varepsilon_1$. As a result, this contradicts the statement above, so there are at most a finite
	number of hyperbolic equilibria within $D\subset\mathbb{R}^n$. The proof is completed.
\end{proof}

Then we introduce the concept of `loop' to simplify the description of the following content.
By `loop' we mean the invariant set that consists of equilibrium and complete orbits and
is homeomorphic to one-sphere, $S^1$. Some examples are a periodic orbit, and an invariant
set that is consist of a homoclinic orbit and the corresponding saddle, and invariant set that
is composed of heteroclinic orbits and related equilibria.

\begin{theorem}\label{th-2-2}
	If the system (\ref{eq-2-1}) is two-dimensional and \textbf{H1-H2} are satisfied, then
	\par (i) There must be a finite and odd number of equilibria in $S$, namely
	$2k+1, k\in \mathbb{N}$ equilibria, in which $k+1$ equilibria are the saddles, and $k$ equilibria
	are unstable nodes or focuses;
	\par (ii) There is at most one loop in $S$, which is an unstable limit cycle or a loop consisting
	of saddles, unstable nodes or focuses, and the heteroclinic orbits that flow from
	the latter to the former. Besides, if the loop does exist, it contains an attractor
	inside it, and another attractor is outside of it.
\end{theorem}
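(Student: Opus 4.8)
The plan is to first convert the structural-stability hypothesis \textbf{H2} into concrete dynamical information, and then split the argument into a homological count for part (i) and a planar-topology count for part (ii). The starting point is that in dimension two structural stability is equivalent (Peixoto's theorem) to the flow being Morse--Smale: all equilibria are hyperbolic and finite in number (finiteness also following from Lemma~\ref{lemma-2-1-finite}), all periodic orbits are hyperbolic and finite, there are no saddle-to-saddle connections, and the chain-recurrent set is exactly the union of these equilibria and periodic orbits. Moreover $S$ can contain no asymptotically stable equilibrium, and since the structure under study has $A_1,A_2$ as its only attractors it contains no stable limit cycle either; hence every equilibrium of $S$ is a saddle ($\dim W^u=1$) or an unstable node/focus ($\dim W^u=2$), and every periodic orbit in $S$ is an unstable limit cycle.

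For part (i) I would use the Euler characteristic of the Conley index. By Theorem~\ref{th-2-1} the index of $S$ is $\Sigma^1$, whose Euler characteristic is $-1$. Evaluating the Morse equation of the decomposition of $S$ into its equilibria and periodic orbits at $t=-1$ kills the $(1+t)Q(t)$ term, so the Euler characteristic is additive over the decomposition: each hyperbolic equilibrium contributes its Poincar\'e--Hopf index $(-1)^{\dim W^u}$ (namely $-1$ for a saddle and $+1$ for an unstable node/focus), while each periodic orbit contributes $0$. Writing $s$ and $u$ for the numbers of saddles and of unstable nodes/focuses, this yields $-s+u=-1$, i.e. $s=u+1$; setting $k:=u$, the set $S$ contains $2k+1$ equilibria, of which $k+1$ are saddles and $k$ are unstable nodes or focuses, as claimed.

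For part (ii) the key is a planar separation argument. Because $S$ has no stable equilibrium, the only connecting orbits available inside $S$ run from an unstable node/focus to a saddle (saddle--saddle connections are excluded by Morse--Smale, and one cannot flow into a source); tracing a subset homeomorphic to $S^1$ through the equilibria then forces the sources and saddles to alternate, joined by such node-to-saddle orbits, whereas a loop carrying no equilibrium must be a periodic orbit, necessarily an unstable limit cycle. The central observation is that any loop $L$ is an invariant Jordan curve, so by uniqueness of solutions no orbit may cross it; consequently each region into which $L$ cuts $U$ is positively invariant and compact, and by the Poincar\'e--Bendixson theorem together with the Morse--Smale structure every such region must contain an attractor of the system. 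Since $A_1$ and $A_2$ are the only attractors, a single loop produces exactly two complementary regions, each holding exactly one attractor, which is precisely the asserted separation; and two distinct loops would cut $U$ into at least three disjoint positively invariant regions, forcing at least three attractors, a contradiction. Hence there is at most one loop.

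I expect the main obstacle to be the topological bookkeeping in the uniqueness step: one must check that two distinct loops always create at least three complementary regions even in the degenerate case where two heteroclinic loops share a source vertex (producing a figure-eight rather than two disjoint Jordan curves), and one must justify cleanly that every compact positively invariant planar region in a Morse--Smale flow contains an attractor. A secondary but pervasive point is the reduction of \textbf{H2} to the Morse--Smale property and the explicit use of the standing hypothesis that $A_1,A_2$ are the only attractors, which is exactly what rules out stable limit cycles and makes the region count decisive.
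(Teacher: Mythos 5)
Your argument is correct, but it takes a genuinely different route from the paper's in both parts. For part (i) the paper never invokes the Morse equation: it computes the winding number of $\partial U$ (which is $+1$), uses additivity of the Poincar\'e index to subtract the contributions $+1$ of the two attractors, and concludes that the equilibria of $S$ carry total index $-1$; you instead read the same number $-1$ off as the Euler characteristic of the Conley index $\Sigma^1$ already computed in Theorem~\ref{th-2-1}(i) and distribute it over a Morse decomposition of $S$ into hyperbolic equilibria and (Euler-characteristic-zero) unstable cycles. The two counts are equivalent, but yours reuses the Conley index directly and accounts for periodic orbits in $S$ explicitly, while the paper's is more elementary, relying only on classical planar index theory. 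For part (ii) the divergence is more substantial: the paper runs a four-step case analysis (a loop enclosing no attractor, a loop enclosing both, two nested loops around one attractor, one loop around each attractor), in each case trapping an infinite family of orbits in a sub-region and showing they would have no admissible $\omega$-limit set; you compress all of these cases into the single principle that every compact positively invariant complementary region of the union of loops must contain an attractor, and then count regions against the two available attractors. This is cleaner and more uniform, but it shifts the burden onto exactly the two points you flag: that a compact positively invariant region of a planar Morse--Smale flow contains an attractor (which is precisely what the paper's $\omega$-limit eliminations establish case by case via Poincar\'e--Bendixson and the exclusion of saddle connections by \textbf{H2}), and that two distinct invariant loops, even when they share equilibria or heteroclinic edges, cut $U$ into at least three positively invariant regions. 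Since invariant loops can only meet along equilibria and whole orbits, never transversally, an Euler-characteristic count of the resulting planar graph settles the latter, so neither point is a genuine gap---but both should be written out if your version is to replace the paper's.
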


\begin{proof}
	(i) We prove that the invariant set $S$ contains a finite number of equilibria first. By \textbf{H1}, we
	know that region $U$ is bounded and closed. According to \textbf{H2} and the
	Andronov-Pontryagin criterion~\cite{Andronov1937, Kuznetsov2013}, we obtain all the equilibria
	in $U$ are hyperbolic. Then by Lemma~\ref{lemma-2-1-finite}, we can conclude that there are
	at most a finite number of hyperbolic equilibria in $U$. So the number of equilibria in $S$ is
	also finite.
	
	Then, we prove that $S$ contains an odd number of equilibria. The winding numbers both
	of the boundary $\partial U$ and each attractor are +1. Then by the fact that the winding number
	of a closed curve is equal to the sum of the winding numbers of all isolated equilibria contained
	in it, we know that $S$ must contain equilibria other than the two attractors, and the sum of
	the winding number of these equilibria is -1. Also, all equilibria are hyperbolic due to \textbf{H2}.
	So we can divide these equilibria into two types according to their winding number: saddle,
	whose index is -1, and non-saddle, whose index is +1. Node and focus belong to the
	non-saddle class. Then $S$ must contain an odd number of equilibria, where the amount of
	the saddle is one more than the number of non-saddle. Otherwise, the sum of the winding
	number of total equilibria in $S$ can not be -1. So in mathematical terms, $S$ contains $2k+1$
	equilibria, where $k+1$ ones are saddles, and $k$ are non-saddle. Besides, because of
	\textbf{H1}, the stable equilibrium cannot be included in $S$. Thus, the non-saddle can only
	be unstable nodes or focuses.
	
	(ii) We prove it by the following four steps:
	
	The first step is to show that the possible loop is an unstable limit cycle or a loop consisting
	of saddles, unstable nodes or focuses, and the heteroclinic orbits that flow from
	the latter to the former. If there is no equilibrium on the loop, it will be an
	unstable limit cycle. That is because $S$ does not contain other attractors, including cycle
	attractor, by \textbf{H1} and the system is also structurally stable by \textbf{H2}. If the loop
	contains equilibrium, it will be the loop consisting of saddles, unstable nodes or focuses, and
	the heteroclinic orbits that flow from the latter to the former. Because the asymptotically
	stable equilibria cannot appear again on the loop by \textbf{H1}, and both the homoclinic
	orbits and the heteroclinic orbits that flow from one saddle to another saddle are not present
	due to their structural instability by \textbf{H2}. So the loop can only be one of these two
	candidates if any.
	
	The second step is to give the position of the loop if it exists, and we will show that the loop
	must be around an attractor. In other words, two attractors are located inside and outside the
	loop, respectively. If it is not the case, then the loop either does not contain any attractor or
	contains two attractors.
	
	For the first case, we assume that the loop does not contain any attractor. So there must be
	an infinite number of orbits in any small neighborhood inside the loop, and they flow further
	into the interior of the loop. Then their $\omega-$limit sets are either cycles or points. The
	cycles are impossible because they must be hyperbolic according to structural stability.
	Stable cycles cannot appear, and unstable cycles cannot be served as $\omega-$limit sets.
	Besides, it is also impossible for their $\omega-$limit sets to be points. Because an unstable
	node or focus cannot be served as the limit set as well, and the stable equilibrium is not
	contained inside this loop. The saddle is also impossible because the amount of saddles is
	finite by conclusion (i), and they cannot be the $\omega-$limit sets of an infinite number of
	orbits. Therefore, the assumption is not correct, and the loop must contain at least one attractor.
	
	For the second case, we assume that there are two attractors inside the loop. Then there
	must be an infinite number of orbits in the small neighborhood outside the loop, which are
	further away from this loop but still stay inside the region $U$. Their $\omega-$limit sets are
	either cycles or points, but both are impossible based on the same analysis as the first case.
	So the loop cannot contain two attractors either.
	
	In short, neither case is possible. So if the loop does exist, it must precisely contain one
	attractor.
	
	The third step is to prove that there is at most one loop around each attractor. Assume there
	are two or more loops around each attractor. Then one can find two adjacent loops, say
	$C_1$, $C_2$, and $C_1$ is inside $C_2$. There must be an infinite number of orbits in the
	small neighborhoods outside of $C_1$ and inside of $C_2$. These orbits flow away from the
	two loops, respectively, and remain between these two loops. Then contradictions can be
	obtained based on the same analysis as the second step once again. Therefore, there is at most
	one loop around each attractor.

	The final step is to show that it is impossible to have a loop for each attractor. We assume
	that is not true, then the outer neighborhood of each loop has an infinity number of orbits,
	which respectively away from these two loops and are still stay in the region $U$. Here the
	contradiction occurs again. So the assumption is not valid.
	
	From the four steps above, we conclude that $S$ contains at most one loop, which is an
	unstable limit cycle or a loop composed of saddles, unstable nodes or focuses, and
	heteroclinic orbits. Moreover, if the loop does exist, it must contain an attractor.
	The proof is completed.
\end{proof}

In the context above, the Conley index of the invariant set $S$ and its possible components were
studied. Now, we are going to consider the configuration of the orbits globally outside of the
invariant set $S$.

Here according to whether the invariant set $S$ contains the loop structure, we make the
following two mutually exclusive assumptions:

\begin{description}
	\item[\textbf{H3}] Invariant set $S$ does not contain loop structure;
	\item[\textbf{H3$^\prime$}] Invariant set $S$ contains loop structure;
\end{description}

Furthermore, we also consider two additional cases according to the connectivity of $S$:

\begin{description}
	\item[\textbf{H4}] The invariant set $S$ is connected;
	\item[\textbf{H4$^\prime$}] The invariant set $S$ is disconnected, in which it has a finite number of
	connected components.
\end{description}

Note that it is beyond the capability of the Conley index to judge whether the invariant set $S$ is
connected or not, unless with the help of other information. Here, we are trying to study all
possible configurations to understand clearly the internal structure of region $U$.

In the case of connected, we can obtain the following lemma.

\begin{lemma}\label{le-2-1}
	If the system (\ref{eq-2-1}) is two-dimensional and \textbf{H1-H4} are satisfied, then for
	invariant set $S$, there must be precisely two disjoint orbits, such that the points on it will
	gradually approach $S$ as time goes to infinity. However, on the other orbits nearing $S$
	points will be gradually away from $S$ finally.
\end{lemma}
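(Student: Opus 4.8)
The plan is to exploit the fine structure of $S$ furnished by Theorem~\ref{th-2-2} together with \textbf{H3} and \textbf{H4}, thereby reducing the statement to a finite count of the stable branches of the saddles inside $S$. First I would argue that, under \textbf{H4} (connectedness) and \textbf{H3} (no loop), $S$ is a finite topological tree. By Theorem~\ref{th-2-2}(i) it contains $2k+1$ hyperbolic equilibria, namely $k+1$ saddles and $k$ unstable nodes or foci. Since the system is structurally stable, the Poincar\'e--Bendixson theorem forces every point of $S$ to lie on an orbit whose $\alpha$- and $\omega$-limit sets are equilibria of $S$; periodic orbits and homoclinic or heteroclinic graphics are excluded, since each would be a subset homeomorphic to $S^1$, contradicting \textbf{H3}. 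Ruling out the impossible connections — a source can never be an $\omega$-limit, an attractor is absent from $S$ by \textbf{H1}, and saddle-to-saddle orbits are forbidden by structural stability (\textbf{H2}) — leaves only heteroclinic orbits running from an unstable node or focus to a saddle. Hence $S$ is a graph whose vertices are the $2k+1$ equilibria and whose edges are these heteroclinic orbits; \textbf{H3} forbids both graph cycles and double edges (either would yield a subset homeomorphic to $S^1$), so with \textbf{H4} the graph is a tree and therefore has exactly $2k$ edges.

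Second I would carry out the branch count. Each saddle has a one-dimensional stable manifold consisting of the saddle together with two stable branches, each branch being a single orbit, so the $k+1$ saddles contribute $2(k+1)=2k+2$ stable branches in total. Every edge of the tree enters its terminal saddle along one such branch, and distinct edges must use distinct branches because a branch is one orbit; thus the $2k$ edges consume exactly $2k$ of the stable branches. Exactly $2k+2-2k=2$ stable branches therefore remain free. Traced backward in time these two cannot converge to a source (else they would be one of the $2k$ heteroclinic edges already in $S$), so by Poincar\'e--Bendixson they must leave the isolating neighborhood $N$ backward; forward in time each still converges to its saddle, hence to $S$. These two distinct, and therefore disjoint, orbits approach $S$ as $t\to\infty$.

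Third I would show these are the only approaching orbits and that everything else departs. An orbit not contained in $S$ whose forward trajectory approaches $S$ must have an equilibrium of $S$ as its $\omega$-limit; this equilibrium cannot be a source, which repels, so it is a saddle, placing the orbit on that saddle's stable manifold, i.e. it is precisely one of the two free branches identified above. Conversely, for a point of a small isolating neighborhood of $S$ lying neither in $S$ nor on a free branch, its forward orbit does not converge to $S$; since the neighborhood is isolating the orbit exits it, and because $U$ is forward invariant with only $A_1$ and $A_2$ available as $\omega$-limits away from $S$, the orbit is carried to an attractor and thus moves away from $S$. This yields the dichotomy asserted in the lemma.

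The step I expect to be the main obstacle is the reduction in the first paragraph: certifying that $S$ is genuinely a finite one-complex (a tree) rather than a more complicated invariant continuum, and that every connecting orbit is of source-to-saddle type, so that the edge count is exactly $2k$ and the edge-to-branch assignment is injective. Once this combinatorial structure is secured, the tally of two free branches is forced and the remainder is a routine Poincar\'e--Bendixson argument; the base case $k=0$, in which $S$ is a single saddle whose two stable branches are exactly the two approaching orbits, serves as a useful consistency check.
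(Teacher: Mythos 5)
Your argument is correct, but it takes a genuinely different route from the paper. The paper's proof is a Conley-index contradiction: assuming $n\neq 2$ approaching orbits, it collapses the simply connected set $S$ homotopically to a single point with $n$ stable sectors separated by $n$ unstable sectors, reads off the index of that point as a wedge of $n-1$ copies of $\Sigma^1$, and compares with $H(S)=\Sigma^1$ from Theorem~\ref{th-2-1} to force $n=2$. You instead never invoke the index of $S$ at all: you use Theorem~\ref{th-2-2}(i) to fix the population of equilibria, show via Poincar\'e--Bendixson, hyperbolicity, and \textbf{H2}--\textbf{H3} that every nontrivial orbit of $S$ is a source-to-saddle heteroclinic, conclude from \textbf{H3}--\textbf{H4} that $S$ is a tree on $2k+1$ vertices with exactly $2k$ edges, and then count: $2(k+1)$ stable branches minus $2k$ consumed by edges leaves exactly $2$ free branches, which are precisely the approaching orbits. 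The step you flag as the main obstacle --- that $S$ is a finite one-complex with only source-to-saddle edges --- is in fact secured by the argument you give (limit sets in $S$ cannot be periodic orbits or graphics by \textbf{H3}, cannot be sources forward or attractors backward, and saddle connections are excluded by \textbf{H2}), so there is no real gap. What your approach buys is concreteness: it replaces the paper's least rigorous step (the homotopy collapse of $S$ to a multi-sectored rest point and the assertion about its index) with an explicit Euler-type count, it identifies exactly which two orbits approach $S$ (the two unused stable branches), and it handles the ``all other nearby orbits depart'' clause via the isolating neighborhood rather than by assertion. What the paper's approach buys is uniformity: the same collapse argument is reused verbatim for Lemma~\ref{le-2-2} (components of index $\bar{0}$ admitting exactly one approaching orbit), whereas your count would have to be redone with the modified equilibrium tally for that case.
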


\begin{proof}
	We prove this lemma by contradiction. We assume that there exist $n$ orbits, on which points
	gradually approach the invariant set $S$, but $n$ is not equal to 2. Then these $n$ orbits must
	be located on the stable manifolds of the saddles, and the orbits nearing these $n$ orbits will
	away from the invariant set $S$ finally. From \textbf{H4}, we know that $S$ is connected, and from
	\textbf{H2-H3}, we know that $S$ does not have the loop structure, which can be thought of as a simple
	closed curve in a plane. Thus $S$ is simply connected,
	and it is homotopic to a point, denoted by $P_s$. Then $P_s$ have $n$ stable manifolds, which are
	separated by $n$ unstable manifold. So the Conley index of this point is the wedge sum of $n-1$
	multiple of $\Sigma^1$. Since the Conley index is algebraic topological invariant, the Conley index
	of original invariant set $S$ is the wedge sum of $n-1$ multiple of $\Sigma^1$ as well.  This result
	contradicts the conclusion (i) in Theorem~\ref{th-2-1}, so there must be precisely two orbits
	considered above. The proof is completed.
\end{proof}

\begin{theorem}\label{th-2-3}
	If system (\ref{eq-2-1}) is two-dimensional and \textbf{H1-H4} are satisfied, then there must
	exist two different points, $p_1$ and $p_2$, on $\partial U$, such that the orbits through $p_1$
	or $p_2$ will approach invariant set $S$, that is, their $\omega-$limit sets are subset to $S$.
	In addition, the orbits passing through other points on $\partial U$ will approach the
	corresponding attractor $A_1$ or $A_2$.
\end{theorem}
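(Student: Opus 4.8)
The plan is to realize the two boundary points $p_1,p_2$ as the backward-in-time exit points of the two orbits that approach $S$, and then to use the Poincaré–Bendixson theorem to pin down the asymptotic fate of every other boundary orbit. First I would invoke Lemma~\ref{le-2-1}: under \textbf{H1--H4} there are exactly two disjoint orbits, call them $\gamma_1$ and $\gamma_2$, whose points tend to $S$ as $t\to+\infty$, while every nearby orbit is eventually repelled from $S$. Together with $S$ itself these two orbits constitute the entire stable set of $S$, so any orbit whose $\omega$-limit set lies in $S$ must coincide with $\gamma_1$ or $\gamma_2$.

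Next I would trace $\gamma_1$ and $\gamma_2$ backward in time. Because $U$ is compact and, by \textbf{H1}, orbits cross $\partial U$ strictly inward, an orbit that remained in $U$ for all negative time would possess a nonempty compact $\alpha$-limit set contained in $U$. Since $A_1,A_2,S$ exhaust the invariant sets of $U$ (being the maximal invariant set outside the attractor neighborhoods, $S$ absorbs any other invariant recurrence), this $\alpha$-limit set would have to be $A_1$, $A_2$, or contained in $S$. The attractors are excluded, as an asymptotically stable equilibrium has trivial unstable manifold and hence cannot be an $\alpha$-limit set; and $S$ is excluded, since an orbit both $\alpha$- and $\omega$-asymptotic to $S$ would form a homoclinic loop, which is forbidden by \textbf{H3} together with the structural stability \textbf{H2}. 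Hence neither $\gamma_1$ nor $\gamma_2$ can stay in $U$ backward in time: each must cross $\partial U$, at a point I label $p_1$ and $p_2$, respectively. By \textbf{H1} an orbit that has entered $U$ never returns to $\partial U$, so each crossing point is unique, and since $\gamma_1,\gamma_2$ are disjoint we get $p_1\neq p_2$. By construction the forward orbit through $p_i$ is $\gamma_i$, whose $\omega$-limit set lies in $S$.

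Finally I would treat an arbitrary $p\in\partial U$ with $p\neq p_1,p_2$. By \textbf{H1} its forward orbit stays in the compact set $U$, so the Poincaré–Bendixson theorem forces its $\omega$-limit set to be a nonempty compact invariant subset of $U$; under \textbf{H3} there is no loop, hence no periodic orbit or polycycle in $U$, and Lemma~\ref{lemma-2-1-finite} guarantees finitely many equilibria, so the $\omega$-limit set reduces to a single equilibrium among $A_1,A_2$ and the equilibria of $S$. An unstable node or focus cannot serve as an $\omega$-limit set, and approaching a saddle of $S$ forward in time would place $p$ on the stable set of $S$, forcing its orbit to coincide with $\gamma_1$ or $\gamma_2$ and hence $p\in\{p_1,p_2\}$, a contradiction. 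Therefore the $\omega$-limit set of the orbit through $p$ is $A_1$ or $A_2$, which completes the argument.

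The step I expect to be the main obstacle is the backward-tracing argument of the second paragraph — specifically, excluding rigorously that the $\alpha$-limit set of $\gamma_i$ is contained in $S$ (a homoclinic loop back to $S$) and confirming that the backward orbit genuinely leaves $U$ through a single well-defined boundary point rather than accumulating on some hidden recurrence inside. Making this airtight requires combining \textbf{H2} and \textbf{H3} with the Conley-index count $\Sigma^1$ from Theorem~\ref{th-2-1}, so that no additional invariant set can conceal the separatrix inside $U$.
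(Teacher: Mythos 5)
Your proof is correct and follows the same backbone as the paper's own argument: Lemma~\ref{le-2-1} supplies the two distinguished orbits, backward tracing places their entry points on $\partial U$, and Poincar\'e--Bendixson handles the remaining boundary points. Two differences are worth noting. First, for the step you flag as the main obstacle --- excluding that $\gamma_i$ stays in $U$ for all negative time with $\alpha$-limit set inside $S$ --- the paper dispatches this in one line, and the cleanest justification is not a homoclinic-loop argument: such a $\gamma_i$ would be a complete orbit contained in the isolating neighborhood $N=U\setminus(U_1\sqcup U_2)$, hence contained in $\mathrm{Inv}(N)=S$ itself, contradicting its role as an orbit merely approaching $S$. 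Your loop-based exclusion is repairable but less direct, since a connection from an unstable node of $S$ to a saddle of $S$ is a structurally stable heteroclinic orbit, not literally a homoclinic loop. Second, in the final step the paper does not argue orbit-by-orbit: it uses $\varphi_1\cup S\cup\varphi_2$ as a separatrix dividing $U$ into two positively invariant sub-regions and applies Poincar\'e--Bendixson to each sub-region, which proves the statement and additionally shows that each sub-region contains exactly one attractor --- the basin picture exploited in the discussion following the theorem. Your orbit-by-orbit case analysis proves the literal statement equally well, but does not by itself establish that the two sides of the separatrix feed different attractors.
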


\begin{proof}
	By Lemma~\ref{le-2-1}, we obtain that near $S$ there precisely exist two different orbits, on
	which point are gradually approach $S$, that is to say, their $\omega-$sets are subset to $S$.
	We denote
	these orbits by $\varphi_1$ and $\varphi_2$, respectively. On the boundary of $U$, $\partial U$,
	there must exist two distinct points $p_1$ and $p_2$, such that $\varphi_1$ and $\varphi_2$ flow
	into $U$ through $p_1$ and $p_2$, respectively. Otherwise, we assume that $\varphi_i, i=1,2$,
	comes from the interior of $U$. Thus $\varphi_i$ either flow from $A_i$ or flow from $S$, which
	are both impossible.
	
	We denote the $\omega-$sets of $\varphi_i$ by $E_i$, $i=1,2$. Thus $E_i\in S$. Since $S$ is
	connected, there must exist a path from $E_1$ to $E_2$. Thus the two orbits, $\varphi_1$ and
	$\varphi_2$, together with invariant set $S$, can divide the region $U$ into two sub-regions, as shown
	in Fig.~\ref{figure-2}. For each sub-region, from \textbf{H1} and Lemma~\ref{le-2-1}, we obtain
	that it is a positively invariant set. So its limit set is nonempty. And then from \textbf{H3},
	we know that there are no periodic orbits. So by Poincar\'e-Bendixson Theorem, the limit set must
	be equilibrium, namely $A_1$ or $A_2$. In this case, the two stable equilibrium must be evenly
	distributed into two sub-regions. The proof is completed.
\end{proof}

\begin{figure}[tbhp]
	\centering
	\includegraphics[width=0.5\textwidth]{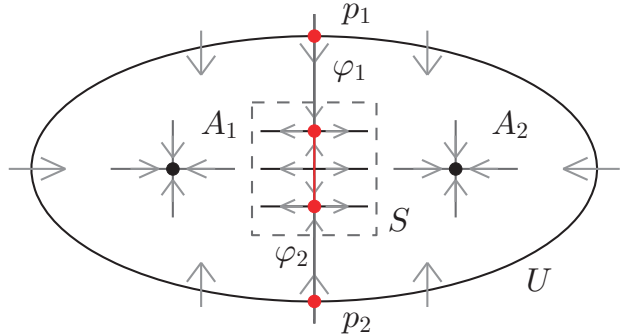}
	\caption{Diagram of the invariant set $S$ in the bistable structure. This is a kind of possible
		configuration of invariant set $S$, in which there are two saddles and an unstable node, and
		$S$ is connected.}
	\label{figure-2}
\end{figure}

The Theorem above describes the destination of the orbits through the boundary $\partial U$,
i.e., their $\omega-$limit sets. Here, we call the connected set that is composed of orbits
$\varphi_1$, $\varphi_2$, and invariant set $S$ as separatrix. This is due to the fact that
inside of region $U$, the orbits at each side of the connected set will approach different
attractors, respectively.

Next, we consider the disconnected case. For this, we assume that $S$ has $K$ connected
components and label these components by $S_1, S_2,\cdots,S_K$. Similar to
Lemma~\ref{le-2-1} and Theorem~\ref{th-2-3}, we have the following results.


\begin{lemma}\label{le-2-2}
	If system (\ref{eq-2-1}) is two-dimensional and \textbf{H2-H3} are satisfied, and the connected
	invariant set $S^*$ is composed of saddle, unstable nodes or focus and connecting orbits, in
	addition, its Conley index is $\bar{0}$, then there must be precisely one orbit, such that
	the points on it will gradually approach $S^*$ as time goes to infinity. However, on the other
	orbits neighboring $S^*$, points will run away from $S^*$ finally.
\end{lemma}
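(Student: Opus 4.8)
The plan is to mirror, almost verbatim, the contradiction argument already used for Lemma~\ref{le-2-1}, adjusting only the target Conley index. First I would suppose, for contradiction, that the number of orbits approaching $S^*$ as $t\to\infty$ is some $n\neq 1$. Since $S^*$ consists only of saddles, unstable nodes or focuses, and the connecting orbits among them, any orbit tending to $S^*$ in forward time must lie on the stable manifold of one of the saddles: the unstable nodes and focuses are purely repelling and contribute no incoming branch. Hence these $n$ approaching orbits are exactly the $n$ incoming stable-manifold branches of $S^*$.

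Next I would use \textbf{H3} together with the connectedness of $S^*$ to reduce the topology to a single point. Because $S^*$ contains no loop structure, the connected set assembled from the saddles, unstable nodes or focuses, and their connecting heteroclinic orbits is simply connected, so it is homotopic to a point $P_s$. Near $P_s$ the local phase portrait then displays $n$ stable sectors alternating with $n$ unstable sectors. Taking a disk $N$ as isolating neighborhood, the exit set $N_0$ is the union of the $n$ disjoint arcs cut out on $\partial N$ by the unstable directions; collapsing $N_0$ to the basepoint, the pointed quotient $N/N_0$ has the homotopy type of a wedge of $n-1$ circles, so that $\tilde H_1(N/N_0)\cong\mathbb{Z}^{\,n-1}$ with all other reduced homology vanishing, and the Conley index of $S^*$ equals the $(n-1)$-fold wedge sum $\bigvee_{n-1}\Sigma^1$.

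Finally, since the hypothesis fixes the Conley index of $S^*$ to be $\bar{0}$, I would match $\bigvee_{n-1}\Sigma^1=\bar{0}$, which forces $n-1=0$, i.e.\ $n=1$, contradicting $n\neq 1$. Thus precisely one orbit approaches $S^*$. The second assertion then follows immediately: every other orbit in a neighborhood of $S^*$ lies off this lone stable branch, so it sits in an unstable sector of $P_s$ and is repelled, running away from $S^*$ as time increases.

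The step I expect to be the main obstacle is the middle one, namely justifying the contraction of $S^*$ to a point $P_s$ carrying a clean $n$-stable/$n$-unstable sector structure and verifying that the Conley index is genuinely preserved under this homotopy. One must argue carefully that \textbf{H3} really yields simple connectedness, that the incoming orbits correspond bijectively to the stable sectors of $P_s$, and that the arc-collapse computation $N/N_0\simeq\bigvee_{n-1}S^1$ is correct. Once this homotopy-invariance bookkeeping is in place, the reduction to $n=1$ and the ``other orbits escape'' conclusion are routine, exactly as in Lemma~\ref{le-2-1}.
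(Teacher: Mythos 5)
Your proposal follows essentially the same route as the paper's own proof: assume $n\neq 1$ approaching orbits, use \textbf{H2--H3} and connectedness to contract $S^*$ to a point, compute the Conley index as the $(n-1)$-fold wedge of $\Sigma^1$, and contradict $H(S^*)=\bar{0}$. Your explicit computation of $N/N_0\simeq\bigvee_{n-1}S^1$ via the arc-collapse is a welcome elaboration of a step the paper leaves implicit, but the argument is the same.
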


\begin{proof}
	
	The proof is similar to the one for Lemma~\ref{le-2-1}, and we prove it by
	contradiction as well. Since $S^*$ is connected and from \textbf{H2-H3}, we know that it
	does not have a loop structure and $S^*$ is simply connected. So $S^*$ can
	be homotopically shrunk to a point, denoted by $P_{s^*}$. We assume that there exist
	precisely $n$ orbits, on which point will gradually approach $S^*$ as time goes to infinity,
	and $n$ is not equal to 1. Then the Conley index of $P_{s^*}$ is the wedge sum of the
	$n-1$ multiple of $\Sigma^1$. Since the Conley index is an algebraic topological invariant, the
	Conley index of the original invariant set $S^*$ is also the wedge sum of the $n-1$ multiple of
	$\Sigma^1$. This contradicts $H(S^*)=\bar{0}$, so there must be one orbit as required. Thus
	the points on other orbits will be away from $S^*$ finally. The proof is completed.
\end{proof}

\begin{theorem}\label{th-2-4}
	If system (\ref{eq-2-1}) is two-dimensional and \textbf{H1,H2,H3,H4$^\prime$} are
	satisfied, then there must be $K+1$ disjoint points, $p_1, \cdots, p_K, p_{K+1}$, on $\partial U$,
	such that the orbits through $p_i$ will approach invariant set $S$. That is to say, their
	$\omega-$limit set is a subset of $S$. In addition, orbits passing through other points
	on $\partial U$ will approach corresponding attractor, $A_1$ or $A_2$.
\end{theorem}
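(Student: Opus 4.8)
The plan is to reduce the disconnected case to the connected analyses already carried out, by counting the orbits that approach $S$ component by component and then summing them through the additivity of the Conley index. First I would note that, by \textbf{H3} together with Theorem~\ref{th-2-2} and \textbf{H2}, none of the components $S_1,\dots,S_K$ contains a loop: each $S_i$ is a connected planar invariant continuum built from finitely many hyperbolic equilibria (saddles, unstable nodes or foci) and heteroclinic orbits with no embedded $S^1$, hence it is acyclic, so simply connected and homotopic to a point $P_i$. Writing $n_i$ for the number of distinct orbits outside $S_i$ whose $\omega$-limit set lies in $S_i$, the same cone-over-the-exit-set computation used in the proofs of Lemmas~\ref{le-2-1} and~\ref{le-2-2} (the point $P_i$ carrying $n_i$ stable manifolds separated by $n_i$ unstable ones) shows that $h(S_i)$ is the wedge of $n_i-1$ copies of $\Sigma^1$. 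In particular $n_i\ge 1$, since a component with $n_i=0$ would be a repeller of index $\Sigma^2$, which is incompatible with $h(S)=\Sigma^1$ carrying no reduced homology above degree one.

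Next I would invoke the Wedge Sum Property~\ref{th-2} for the disjoint decomposition $S=S_1\sqcup\cdots\sqcup S_K$; the components admit pairwise disjoint isolating neighborhoods because there are only finitely many of them. This gives
\[
\Sigma^{1}=h(S)=h(S_{1})\vee h(S_{2})\vee\cdots\vee h(S_{K}).
\]
Since the right-hand side is a wedge of $\sum_{i=1}^{K}(n_i-1)$ copies of $\Sigma^1$, matching the single $\Sigma^1$ on the left forces $\sum_{i=1}^{K}(n_i-1)=1$, that is $\sum_{i=1}^{K}n_i=K+1$. Equivalently, exactly one component has index $\Sigma^1$ with two approaching orbits (as in Lemma~\ref{le-2-1}) and the other $K-1$ components have index $\bar 0$ with one approaching orbit each (as in Lemma~\ref{le-2-2}), so there are precisely $K+1$ orbits in $U$ whose $\omega$-limit set is contained in $S$.

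It then remains to trace each of these $K+1$ orbits back to $\partial U$ and to check that the entry points are distinct. For such an approaching orbit $\varphi$, its backward orbit cannot remain in $U$ forever: by \textbf{H1} the region $U$ is positively invariant, so an $\alpha$-limit set inside $U$ would be a nonempty invariant set contained in $\{A_1,A_2\}\cup S$; it cannot be $A_1$ or $A_2$ (nothing but the equilibrium itself leaves an asymptotically stable equilibrium), and if it lay in $S$ then $\overline{\varphi}$ would be an invariant subset of the isolating neighborhood, forcing $\varphi\subset S$ by maximality, contrary to $\varphi$ being external. Hence $\varphi$ crosses $\partial U$, and by the strict inward transversality in \textbf{H1} it does so at a single point; distinct orbits yield distinct points, producing $p_1,\dots,p_{K+1}$.

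Finally, for any other point of $\partial U$ the orbit enters the positively invariant set $U$ and has nonempty $\omega$-limit set; by \textbf{H3} there is no periodic orbit in $U$ (a cycle would avoid the small neighborhoods of $A_1,A_2$ and hence be a loop contained in $S$), so the Poincar\'e--Bendixson theorem forces the $\omega$-limit to be an equilibrium. It cannot be a saddle (those are reached only along the $K+1$ approaching orbits already counted) nor an unstable node or focus (which repel nearby orbits), so it must be $A_1$ or $A_2$. The step I expect to be the main obstacle is the per-component index identity $h(S_i)=\Sigma^1\vee\cdots\vee\Sigma^1$ ($n_i-1$ factors) together with the exclusion of repelling components: it is where the planar topology of $S_i$, the hyperbolicity from \textbf{H2}, and the acyclicity from \textbf{H3} must be combined carefully, and it is precisely what makes the clean additive count $\sum_i n_i=K+1$ possible.
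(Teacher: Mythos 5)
Your proposal is correct and follows essentially the same route as the paper: decompose $H(S)=H(S_1)\vee\cdots\vee H(S_K)$ via the Wedge Sum Property, conclude that exactly one component has index $\Sigma^1$ (two approaching orbits, by Lemma~\ref{le-2-1}) while the remaining $K-1$ have index $\bar 0$ (one approaching orbit each, by Lemma~\ref{le-2-2}), and trace these $K+1$ orbits back to distinct entry points on $\partial U$. Your additive count $\sum_i n_i=K+1$, the explicit exclusion of repelling components, and the $\alpha$-limit-set argument for why each approaching orbit must cross $\partial U$ are just slightly more detailed renderings of steps the paper treats implicitly.
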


\begin{proof}
	From \textbf{H4$^\prime$} and Property~\ref{th-2} listed in Appendix~\ref{sec-appendix}, one has
	\begin{equation}
		H(S) = H(S_1) \vee H(S_2) \vee \cdots \vee H(S_K).
	\end{equation}
	Due to the conclusion that $H(S) = \Sigma^1$ in Theorem~\ref{th-2-1}, for invariant set $S$, there
	must be one connected component whose Conley index is $\Sigma^1$ and all the Conley index of other
	$K-1$ connected component is the additive identity~\cite{Conley1978}, namely $\bar{0}$.
	Otherwise, the Conley index of $S$ will be the Wedge sum of some items, which contradicts
	$H(S) = \Sigma^1$.
	
	To make it easier to describe, we re-label $K$ connected components by exchanging the
	label of the first component and the component whose Conley index is $\Sigma^1$. In the process,
	if the Conley index of component $S_1$ is exactly $\Sigma^1$, then the label method remains the
	same. After this process, the Conley index of the first component $S_1$ will be $\Sigma^1$, and
	the  Conley index of others will be $\bar{0}$.
	
	For connected component $S_1$, by Lemma~\ref{le-2-1} above, we know that there must be two orbits
	whose $\omega-$limiet sets are subset to $S_1$, and we label them by $\varphi_1^1, \varphi_1^2$.
	Similar to the Theorem~\ref{th-2-3}, $\varphi_1^1$, $\varphi_1^2$ must come from outside of
	region $U$. We denote the intersection points of these two orbits and boundary $\partial U$
	by $p_1^1$, $p_1^2$.
	
	For each of other components, say $S_i, i=2,\cdots,K$. By Lemma~\ref{le-2-2} above we also know
	that there must be one and only one orbit $\varphi_i$ whose limit set is subset to $S_i$. This orbit must come from
	outside of $U$ as well, and we denote the corresponding intersection point by $p_i$.
	
	Finally, we get $K+1$ different points on the boundary $\partial U$, and the orbits through
	them all approach the corresponding connected component of invariant set $S$. For other points on
	the boundary, the orbits that through them finally approach one of the two attractors.
\end{proof}

Similar to the connected case, there is also a separatrix in the disconnected case. The difference
is that this separatrix is composed of the connected component $S_1$, which Conley index is
$\Sigma^1$, and the two orbits, $\varphi_1^1$ and $\varphi_1^2$. On the boundary $\partial U$,
the points on the same side of this separatrix will approach the same attractor except a finite
number of points, namely $p_i, i=2,\cdots,K$ in the proof above.

To summarize, the separatrix acts as a role of the threshold, and the point on the boundary
can be considered as an initial condition. Except for a finite set of initial boundary points that
tend to the invariant set $S$, almost all the initial points tend to one of the two attractors.
More importantly, those initial points located on the different sides of separatrix will have different
destinies.


\begin{theorem}\label{th-2-5-new}
	If system (\ref{eq-2-1}) is two-dimensional, and \textbf{H1,H2,H3$^\prime$,H4} are satisfied,
	then there must be one and only one point on the boundary of $U$, denoted by $p_1$,
	such that its $\omega-$limit set is a subset of $S$, and within the region bounded by $S$
	some attractor, say $A_1$, is contained. Besides, all the other points on the boundary flow
	to another attractor, $A_2$.
\end{theorem}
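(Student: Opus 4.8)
The plan is to let the loop itself play the role of the separating curve: it already encloses one attractor and leaves the other outside, so it does the job that the pair of orbits plus $S$ did in Theorem~\ref{th-2-3}. The remaining work is to count how many boundary points flow into $S$, and I would do this by combining the parity information of Theorem~\ref{th-2-2}(i) with the Conley index $H(S)=\Sigma^1$ of Theorem~\ref{th-2-1}.

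First I would dispose of the assertion about the attractor's location, which is immediate. Since \textbf{H3$^\prime$} forces the loop to exist, Theorem~\ref{th-2-2}(ii) says the loop $L$ encloses exactly one attractor and leaves the other outside; I label the enclosed one $A_1$. As $L\subset\mathrm{Int}(U)$ and every orbit crosses $\partial U$ strictly inward by \textbf{H1}, the whole boundary $\partial U$ lies in the exterior component $R_2:=U\setminus\overline{\mathrm{int}(L)}$, which contains $A_2$. Next I would observe that every boundary orbit is trapped in $\overline{R_2}$ and has $\omega$-limit equal to $A_2$ or to a saddle of $S$: because $L$ is a union of whole orbits it cannot be crossed, so an orbit entering through $\partial U$ can never reach $A_1$, and its $\omega$-limit lies in $\overline{R_2}$. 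By structural stability (\textbf{H2}) together with \textbf{H1} the only candidates are $A_2$, the loop $L$, and the saddles of $S$ meeting $\overline{R_2}$; the loop is excluded as an exterior $\omega$-limit since, enclosing an attractor and being neither a cycle attractor (\textbf{H1}) nor semistable (\textbf{H2}), it repels on its outer side.

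The crux is to show that exactly one boundary orbit enters $S$. Here I would use the count in Theorem~\ref{th-2-2}(i): $S$ has $k+1$ saddles and $k$ nodes or focuses. The loop accounts for its equilibria in balanced node/saddle pairs (a heteroclinic polygon uses $k$ nodes and $k$ saddles, a smooth unstable cycle uses none), so precisely \emph{one} saddle is left over. The saddles lying on the loop have both stable branches inside the loop as heteroclinic orbits, so they never reach $\partial U$, and the nodes and focuses are sources that attract nothing. The single leftover saddle must attach to the loop to keep $S$ connected (\textbf{H4}); since a second attachment would create a second loop, which Theorem~\ref{th-2-2}(ii) forbids, exactly one of its stable branches is consumed by the connecting orbit while the other is free. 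Arguing as in Theorem~\ref{th-2-3} that this free branch cannot originate in $\mathrm{Int}(U)$, it must cross $\partial U$ at a single point $p_1$ whose orbit has $\omega$-limit in $S$. Equivalently, one isolates this leftover appendage as an index pair of Conley index $\bar 0$ and quotes Lemma~\ref{le-2-2}, the loop having absorbed the whole $\Sigma^1$ of $H(S)$. Every other boundary point, lying off this lone stable orbit, then flows to $A_2$ by the previous step, and Poincar\'e--Bendixson (no exterior cycles, by \textbf{H2} and Theorem~\ref{th-2-2}(ii)) forces the limit to be the equilibrium $A_2$.

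I expect the genuine difficulty to be exactly this count—showing it is one and not zero or more. That it is not zero rests on $S\neq L$: a bare loop has Conley index $\Sigma^1\vee\Sigma^2$ rather than $\Sigma^1$, so some appendage must protrude into $R_2$. That it is not two or more rests on the "at most one loop" clause of Theorem~\ref{th-2-2}(ii) together with the single-saddle parity surplus, which together forbid any further free stable branch. The delicate point is packaging these planar facts so that Lemma~\ref{le-2-2} applies cleanly to the exterior appendage, i.e. certifying that once the loop is removed the remaining piece is isolated with index $\bar 0$; this is where the interplay between the Jordan-curve separation by $L$, the winding-number parity, and the index additivity has to be made precise.
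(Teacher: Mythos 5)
Your overall strategy matches the paper's: the loop acts as the cycle separatrix, and the whole difficulty is showing that exactly one orbit entering through $\partial U$ tends to $S$. But the step you yourself flag as delicate is where the argument actually breaks. First, the parity claim ``precisely one saddle is left over'' does not follow from Theorem~\ref{th-2-2}(i): the loop carries $m$ saddles and $m$ unstable nodes or focuses for some $m\le k$ (they alternate around the loop), so the appendage contains $k+1-m$ saddles and $k-m$ non-saddles --- an excess of one saddle, not a single saddle. Your branch count for ``the single leftover saddle'' therefore only treats the special case $m=k$; for a larger appendage you would have to control the free stable branches of several saddles simultaneously, and nothing in the proposal does that. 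Second, the fallback of ``isolating the leftover appendage as an index pair of Conley index $\bar 0$'' is not licensed: $S$ is connected (\textbf{H4}), so it cannot be written as a disjoint union of the loop and the appendage as two isolated invariant sets (the connecting orbits accumulate on the loop), and the Wedge Sum Property~\ref{th-2} does not give $H(S)=H(L)\vee H(\mathrm{appendage})$. Indeed, since $H(L)=\Sigma^1\vee\Sigma^2$ while $H(S)=\Sigma^1$, no such splitting can hold, which is exactly the sign that the appendage cannot be handed to Lemma~\ref{le-2-2} on its own.

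The paper closes this gap with a different device: it treats $S$ together with the entire closed region bounded by the loop (including $A_1$ and the orbits flowing to it) as one invariant set, computes its Conley index to be $\bar 0$ directly from the isolating neighborhood $U\setminus U_2$ with exit set $\partial U_2$, and then homotopically collapses the filled loop to a single repelling point. The resulting set $S'$ is connected, simply connected, composed of saddles, unstable nodes or focuses and connecting orbits, and has index $\bar 0$, so Lemma~\ref{le-2-2} applies verbatim and yields exactly one incoming orbit --- no equilibrium count is needed, and the $\omega$-limit of that orbit is automatically a saddle of $S$ off the loop. Your surrounding observations (the loop encloses $A_1$, repels on its outer side, and cannot be crossed; Poincar\'e--Bendixson then sends every other boundary orbit to $A_2$) are sound, but the one-orbit count is the missing idea, and to keep your counting flavour you would in effect have to reprove Lemma~\ref{le-2-2} for a tree of saddles and sources attached to the loop.
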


\begin{proof}
	First, by Theorem~\ref{th-2-2}(ii) and \textbf{H3$^\prime$}, there is one and only one loop in $S$, which is
	an unstable limit cycle or a loop consisting of saddles, unstable nodes or focuses, and
	heteroclinic orbits flowing from the latter to the former. Besides, this loop must contain
	an attractor, say $A_1$, and another attractor $A_2$ is outside this loop, as shown in Fig.~\ref{figure-2-1}.

	Next, we can get that $S$ and the region bounded by $S$, where there are attractor $A_1$ and
	heteroclinic orbits flowing to $A_1$, form a simply connected domain. Its Conley index is
	$\bar{0}$ because its isolating neighborhood can be obtained by digging out a small open
	neighborhood of $A_2$, like $U_2$ in Theorem~\ref{th-2-1}, and the exit set is precisely the
	boundary $\partial U_2$.
	
	Next, we prove that there is only one orbit that flows to $S$ from the outside of $S$. There is
	only one loop in $S$, as shown in Theorem~\ref{th-2-2} above, so that we can deform
	homotopically the region bounded by this loop into a point. Then this point is asymptotically
	unstable since the original loop is unstable. As a result, the region bounded by $S$ becomes
	a new invariant set, denoted by $S^\prime$, which is consists of unstable nodes or focuses,
	saddles, and heteroclinic orbits, and its Conley index is still  $\bar{0}$. Therefore, it can be
	obtained from Lemma~\ref{le-2-2} that there is only one orbit that tends to $S^\prime$. So there
	only exists an orbit that flows to invariant $S$ as well, and the $\omega-$limit set of this
	orbit must be a saddle, which is a subset of $S$ but does not on the loop in $S$.

	The orbit approaching $S$ must flow from the outside of $U$, and it must intersect the
	boundary $\partial U$ at a point, which is $p_1$. The other orbits passing through
	$\partial U$ will only eventually flow towards $A_2$. The proof is completed.
\end{proof}

\begin{figure}[tbhp]
	\centering
	\includegraphics[width=0.5\textwidth]{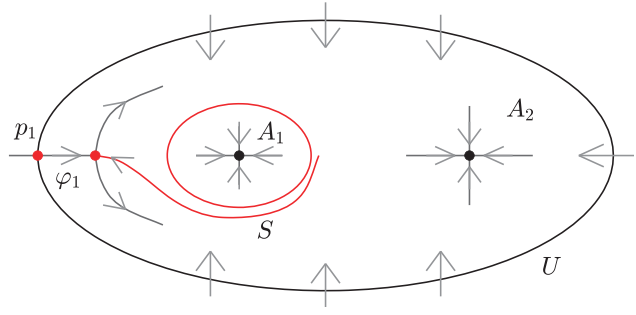}
	\caption{Diagram of the invariant set $S$ in the bistable structure. This is a kind of possible
		configuration of invariant set $S$, in which there are a saddle and an unstable limit cycle, and
		$S$ is connected.}
	\label{figure-2-1}
\end{figure}

\begin{theorem}\label{th-2-6-new}
	If the system (\ref{eq-2-1}) is two-dimensional and \textbf{H1,H2,H3$^\prime$,H4$^\prime$} are satisfied,
	then on the boundary of $U$, there must be $K$ different points, whose $\omega-$limit set is a subset
	of $S$, and other points eventually flow to attractor $A_2$.
\end{theorem}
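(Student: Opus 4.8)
The plan is to combine the wedge-sum bookkeeping of Theorem~\ref{th-2-4} (the disconnected, loop-free case) with the loop-deformation argument of Theorem~\ref{th-2-5-new} (the connected, loop case). First I would record the Conley-index decomposition: since $S$ has the $K$ connected components $S_1,\dots,S_K$, the Wedge Sum Property~\ref{th-2} gives $H(S)=H(S_1)\vee\cdots\vee H(S_K)$, and because $H(S)=\Sigma^1$ by Theorem~\ref{th-2-1}, at most one summand can be nontrivial. Relabelling if necessary, I would take $H(S_1)=\Sigma^1$ and $H(S_i)=\bar{0}$ for $i=2,\dots,K$.

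Next I would locate the loop. By Theorem~\ref{th-2-2}(ii) together with \textbf{H3$^\prime$} there is exactly one loop in $S$, it encircles one attractor, say $A_1$, and $A_2$ lies outside it. Since this loop lies in a single component, while each index-$\bar{0}$ component is, by Lemma~\ref{le-2-2}, built only from saddles, unstable nodes or focuses and connecting orbits (hence loop-free), the loop must sit in the distinguished component $S_1$.

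I would then count the incoming orbits component by component. For $S_1$ I would transplant the argument of Theorem~\ref{th-2-5-new}: enclose $S_1$ together with the region it bounds (which contains $A_1$ and the heteroclinic orbits flowing to $A_1$) in an isolating neighborhood that excludes $A_2$ and the other components $S_2,\dots,S_K$, collapse the loop-bounded region to a single (unstable) point, and thereby replace $S_1$ by a loop-free invariant set $S_1^\prime$ of index $\bar{0}$; Lemma~\ref{le-2-2} then yields exactly one orbit approaching $S_1^\prime$, hence exactly one orbit approaching $S_1$ from outside, whose $\omega$-limit is a saddle of $S_1$ off the loop. For each $S_i$ with $i\ge 2$, which is loop-free of index $\bar{0}$, Lemma~\ref{le-2-2} again gives exactly one incoming orbit. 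Each of these $1+(K-1)=K$ orbits must enter $U$ from outside, since it cannot emanate from an attractor or from $S$; hence it meets $\partial U$ in a single point, and distinct components force distinct orbits and so $K$ distinct points $p_1,\dots,p_K$.

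Finally I would dispatch the remaining boundary points. Any other orbit through $\partial U$ cannot converge to $S$, and by the Poincar\'e--Bendixson theorem its $\omega$-limit set is then an equilibrium attractor (the only loop present is repelling and so cannot serve as an $\omega$-limit set). Because the loop is an invariant Jordan curve with $A_1$ in its interior while every boundary point lies in its exterior, no such orbit can cross the loop to reach $A_1$; therefore all remaining boundary orbits converge to $A_2$. The step I expect to be the main obstacle is the transplanted deformation argument for $S_1$: one must carefully construct an isolating neighborhood around $S_1$ and its enclosed $A_1$-region that simultaneously excludes $A_2$ and the components $S_2,\dots,S_K$, and verify that collapsing the loop-bounded region preserves the index $\bar{0}$ so that Lemma~\ref{le-2-2} applies; the ancillary fact that no exterior orbit can reach $A_1$ rests on the Jordan curve theorem and the invariance of the loop.
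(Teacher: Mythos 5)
Your overall architecture is the same as the paper's: wedge-sum bookkeeping over the $K$ components, the fill-in-and-collapse trick from Theorem~\ref{th-2-5-new} for the loop-bearing component, Lemma~\ref{le-2-2} for the remaining $K-1$ components, and a Poincar\'e--Bendixson/Jordan-curve argument (which the paper merely asserts) for the leftover boundary points. So the plan is sound, but one step is justified circularly and, as written, would not go through: you place the loop in the distinguished index-$\Sigma^1$ component by claiming that ``each index-$\bar{0}$ component is, by Lemma~\ref{le-2-2}, built only from saddles, unstable nodes or focuses and connecting orbits (hence loop-free).'' Lemma~\ref{le-2-2} does not deliver that: loop-freeness (via \textbf{H3}) and the stated composition are among its \emph{hypotheses}, not its conclusions, so it cannot be used to show that an index-$\bar{0}$ component contains no loop. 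Note also that a loop-free component can perfectly well carry index $\Sigma^1$ (a single saddle does), so the identification cannot be read off from the composition of the components alone; yet you do need it, because if the $\Sigma^1$ index landed on a loop-free component you would have to invoke Lemma~\ref{le-2-1} there and would end up with $K+1$ boundary points instead of $K$.

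The fix is essentially the paper's route, which sidesteps the identification: form $S^\prime$ by adjoining to $S$ the region bounded by the loop (containing $A_1$ and the orbits flowing to it). Then $S^\prime$ is the maximal invariant set in $U\setminus U_2$, whose index is $\bar{0}$, so every one of its $K$ components has index $\bar{0}$. The $K-1$ components not meeting the loop are unchanged from $S$, hence each has index $\bar{0}$ already in $S$; the wedge-sum identity $H(S)=\Sigma^1$ then forces the loop-bearing component to be the one with index $\Sigma^1$ (uniqueness of the loop being guaranteed by Theorem~\ref{th-2-2}(ii)). With that substitution your component-by-component orbit count, and the concluding Jordan-curve argument that no exterior boundary orbit can reach $A_1$, go through as you describe.
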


\begin{proof}
	It can be obtained from \textbf{H3$^\prime$} and Theorem~\ref{th-2-2} that there is only one loop
	structure in $S$, and this loop bounds the attractor $A_1$. Then, the invariant set
	$S$ and the region it surrounds, containing $A_1$ and related heteroclinic orbits flowing to it,
	constitute a new invariant set, denoted by $S^\prime$. It also has $K$ connected components,
	and each component is simply connected. The Conley index of this new invariant set is the
	addition identity, $\bar{0}$, and the computation method is the same as that of Theorem~\ref{th-2-5-new}.
	Then, by Property~\ref{th-2}, the Conley index of each component is $\bar{0}$, as well.
	
	For the new variant set $S^\prime$, the structure of the connected component containing the loop
	is the same as that of the combined structure of $S$ with the region bounded by it in Theorem~\ref{th-2-5-new}.
	Therefore, for this component, there must be only one orbit tending to it, denoted by $\varphi_1$.
	For the other $K-1$ components, according to Lemma~\ref{le-2-2}, there is only one orbit tending
	to each component, denoted respectively by $\varphi_i, i =2,3,\cdots,K$. Here $K$ orbits must come from
	the outside of $U$, and they intersect with the boundary $\partial U$ at $K$ different points.
	Therefore, for the $K$ orbits passing through these $K$ points, their $\omega-$limit sets are
	$K$ components respectively subset to $S$, and all of the orbits passing through the
	remaining boundary points will eventually flow to the attractor $A_2$. The proof is completed.	
\end{proof}

When a loop exists, it does have another kind of separatrix, namely, the cycle separatrix.
Different from the separatrix above, cycle separatrix divides the region $U$ into two sub-regions
that are not homotopy equivalent. The first one is an annular region, which is outside of the separatrix,
and another is simply connected, which is inside of the separatrix. The loop structure in Theorem~\ref{th-2-5-new}
and Theorem~\ref{th-2-6-new} just happen to be the cycle separatrix. Inside of it, almost all orbits
flow towards attractor $A_1$, while outside of the loop, almost all orbits, including the orbits
coming from outside of $U$, approach attractor $A_2$.

Finally, for a three or higher dimensional system, it is hard to obtain its global behavior. One
reason is that more complicated structures can occur here, such as strange attractors.
Another reason is that there is no higher-dimensional analog of
the Andronov-Pontryagin criterion\cite{Andronov1937}, which makes it difficult to analyze the
structural stability in high dimensional systems.
However, employing the generalized winding number\cite{Jia2020}, we can still obtain the analogous result in
higher dimensional systems, if all of the equilibria in the system (\ref{eq-2-1}) are hyperbolic.

\begin{description}
	\item[H2$^\prime$] The equilibria in the dynamical system are hyperbolic.
\end{description}

\begin{theorem}\label{th-2-7-new}
	For system (\ref{eq-2-1}), if \textbf{H1,H2$^\prime$} are satisfied, then there must be a finite
	and odd number of equilibria in $S$.
\end{theorem}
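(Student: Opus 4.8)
The plan is to follow the template of Theorem~\ref{th-2-2}(i), but to replace the planar winding number with the generalized winding number of~\cite{Jia2020} so that the parity argument survives in arbitrary dimension. First I would dispose of finiteness exactly as before: assumption \textbf{H2$^\prime$} makes every equilibrium in $U$ hyperbolic, and since $U$ is bounded and closed by \textbf{H1}, Lemma~\ref{lemma-2-1-finite} yields only finitely many equilibria in $U$, hence finitely many in $S$. Note that the invariant set $S$ itself is supplied by Theorem~\ref{th-2-1}, which needs only \textbf{H1} and holds in every dimension, so nothing new is required for its existence.

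For oddness I would argue through the generalized winding number $w(\cdot)$. Each hyperbolic equilibrium $e$ has $\det D\mathbf{f}(e)\neq 0$, so its generalized winding number $w(e)=\operatorname{sign}\det D\mathbf{f}(e)\in\{+1,-1\}$ is well defined. Both $A_1$ and $A_2$ are sinks, hence all their eigenvalues have negative real part and $w(A_1)=w(A_2)=(-1)^n$ (complex conjugate pairs contribute a positive factor to the determinant, and every real eigenvalue is negative, so the sign of the determinant is $(-1)^n$). The same value is the generalized winding number $w(\partial U)$ of the inward-pointing field on the attracting boundary, because under \textbf{H1} every orbit crosses $\partial U$ inward and so, viewed from the outside, $U$ behaves as a single attractor. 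Applying the generalized winding number theorem of~\cite{Jia2020} — the higher-dimensional analogue of the planar fact used in Theorem~\ref{th-2-2} — the boundary winding number equals the sum of the generalized winding numbers of all enclosed equilibria:
\begin{equation}
	w(\partial U)=w(A_1)+w(A_2)+\sum_{e\in S}w(e),
\end{equation}
where the sum runs over the finitely many equilibria contained in $S$.

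Rearranging and inserting $w(\partial U)=w(A_1)=w(A_2)=(-1)^n$ gives $\sum_{e\in S}w(e)=(-1)^n-2(-1)^n=(-1)^{n+1}$, an odd number. Writing $p$ and $q$ for the numbers of equilibria in $S$ with generalized winding number $+1$ and $-1$ respectively, we have $p-q=(-1)^{n+1}$, and since $p+q\equiv p-q\pmod 2$ the total $p+q$ is odd; combined with the finiteness already established, this proves that $S$ contains a finite and odd number of equilibria. The main obstacle is precisely the two inputs $w(\partial U)=(-1)^n$ and the additivity formula: in the plane these are elementary consequences of the winding number of closed curves, whereas in $\mathbb{R}^n$ they rest on degree theory (the Poincar\'e--Hopf index theorem) and, strictly speaking, on $U$ being topologically a ball, so that the inward field on $\partial U$ has the same degree as that of a single sink. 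This is exactly the content imported from~\cite{Jia2020}, and it is what lets the parity conclusion be read off without having to classify the possibly many index types of the equilibria in $S$.
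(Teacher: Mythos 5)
Your proof is correct and follows essentially the same route as the paper: finiteness via \textbf{H2$^\prime$} and Lemma~\ref{lemma-2-1-finite}, then oddness from the Additivity Theorem for the generalized winding number of~\cite{Jia2020} applied to the attracting boundary $\partial U$ and the hyperbolic equilibria, whose indices are all $\pm 1$. You merely make explicit the sign computations ($w(A_i)=w(\partial U)=(-1)^n$, hence $\sum_{e\in S}w(e)=(-1)^{n+1}$) and the topological caveat on $U$ that the paper leaves implicit.
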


\begin{proof}
	The prove is the same as for Theorem~\ref{th-2-2}. We first obtain that there is a finite number of
	hyperbolic equilibria inside of $U$ by assumption \textbf{H1, H2$^\prime$}, and Lemma~\ref{lemma-2-1-finite},
	and so is the number of equilibria in invariant set $S$.
	
	Then we prove that there is an odd number of equilibria within the region $U$. Because all the equilibria
	in the system are
	hyperbolic by \textbf{H2$^\prime$}, its winding numbers are either +1 or -1, which is dependent
	on the dimension of the system and its unstable manifolds. Besides, the winding number of boundary
	$\partial U$ is either +1 or -1 as well. Thus the number of these equilibria must be odd.
	Otherwise, the sum of their winding number will be even by the Additivity Theorem of the
	generalized winding number \cite{Jia2020}. The proof is completed.
\end{proof}

In the preceding Theorems or Lemmas, \textbf{H1} requires orbits must enter the interior of $U$
immediately. This restriction is a little bit harsh, and it can be relaxed slightly as below.
Now the outside orbits are allowed to run along the boundary $\partial U$ for a while
once they touch the boundary, but eventually, they must enter the interior of the $U$ the same as before.
In other words, the boundary $\partial U$ can contain orbit segments that comes from the
outside of $U$ and eventually enters the inside of $U$. So the new assumption is

\begin{description}
	\item[\textbf{H1$^\prime$}] There exists a closed region $U$ in the domain of system (\ref{eq-2-1}),
	such that there are precisely two asymptotically stable equilibria in the interior of $U$,
	say $A_1$ and $A_2$, and on the boundary $\partial U$, orbits run from outside into
	interior immediately, or firstly run along the boundary for a while then into its interior.
\end{description}

With the assumption \textbf{H1} replaced by \textbf{H1$^\prime$}, the invariant set $S$
within $U$ and its connection with two attractors are the same as those in previous studies.
Thus Lemma~\ref{lemma-2-1-finite} to \ref{le-2-2}, and Theorem~\ref{th-2-1},
\ref{th-2-2}, and \ref{th-2-7-new}, are still valid,
but Theorem~\ref{th-2-3} to \ref{th-2-6-new} need to be modified appropriately.
That is because the last four Theorems involve the points in boundary $\partial U$.
And the expression ``disjoint points" in each theorem or lemma should be replaced with
``disjoint points or boundary segments."
Here, we do not take these related results as corollaries or new theorems. Accordingly,
their proofs are omitted as well since they are almost the same as the proofs in related theorems.
No matter whether the boundary is composed of points or orbit segments, as long as they all run
into the interior of $U$, the internal structure of $U$ will not be affected.


\subsection{Case 2: A cycle attractor and a one-point attractor}
\label{sec-2-2}

In this case, we consider the second bistable structure, which contains a cycle attractor and
a one-point attractor. Moreover, we only study it in two-dimensional situations since loop structure,
such as periodic orbits, cannot exist in a one-dimensional system.

For the two-dimensional case, we first give the following two assumptions, which have the
same boundary conditions as \textbf{H1}, and the relative positions of two attractors are
also considered.

\begin{description}
	\item[\textbf{H5}] There exists a closed region $U$ in the domain of system (\ref{eq-2-1}), such that
	there are precisely two attractors, an asymptotically stable periodic orbit $C_1$ and
	an asymptotically stable equilibrium $A_1$, where $A_1$ is inside $C_1$, and on the boundary
	$\partial U$, all orbits run from outside into interior immediately;
	\item[\textbf{H6}] There exists a closed region $U$ in the domain of system (\ref{eq-2-1}), such that
	there are precisely two attractors, an asymptotically stable periodic orbit $C_1$ and
	an asymptotically stable equilibrium $A_1$, where $A_1$ is outside $C_1$, and on the boundary
	$\partial U$, all orbits run from outside into interior immediately;
\end{description}

Similar to Theorem~\ref{th-2-1}, we have the following results.

\begin{theorem}\label{th-2-5}
	If systems (\ref{eq-2-1}) is two-dimensional and \textbf{H5} is satisfied, then
	\par(i) There is other isolated invariant set, denoted by $S$, in the annular region bounded by
	the circle $C_1$ and the point $A_1$, and its Conley index is $\Sigma^2\vee\Sigma^1$.
	\par(ii) There are the connecting orbits from $S$ to both attractors, $C_1$ and $A_1$, as well.
\end{theorem}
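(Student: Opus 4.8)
The plan is to mirror the structure of the proof of Theorem~\ref{th-2-1}: construct a suitable isolating neighborhood for $S$, read off its Conley index from the cone over the exit set, and then force the connecting orbits using the Wedge Sum Property. For part (i), I would first isolate the two attractors. Since $A_1$ is an asymptotically stable equilibrium, there is a small closed disk $U_1$ with $A_1\in\mathrm{Int}(U_1)$ whose boundary $\partial U_1$ is crossed inward by the flow. Since $C_1$ is an asymptotically stable periodic orbit, it admits a thin closed annular tube $V_1$ about it that is positively invariant, so that on \emph{each} of the two boundary circles of $V_1$ the flow points into $V_1$ (toward $C_1$). Writing $C_1^{-}$ for the boundary circle of $V_1$ on the $A_1$ side of $C_1$, I would take the isolating neighborhood to be the closed annulus $N$ bounded outside by $C_1^{-}$ and inside by $\partial U_1$; this is exactly the region between $A_1$ and $C_1$ named in the statement.

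Next I would determine the exit set. On $\partial U_1$ the flow leaves $N$ (entering $U_1$ toward $A_1$), and on $C_1^{-}$ the flow also leaves $N$ (entering $V_1$ toward $C_1$); hence the whole topological boundary is the exit set, $N_0=C_1^{-}\sqcup\partial U_1$, two disjoint circles, and $(N,N_0)$ is readily checked to be an index pair in the sense of Definition~\ref{def-1}. The Conley index is then the pointed homotopy type of $N/N_0$: collapsing each boundary circle of the annulus $N$ to a single common point yields a sphere $S^2$ (from collapsing the two circles separately) with its two poles subsequently identified, which is homotopy equivalent to $S^2\vee S^1$. This gives Conley index $\Sigma^2\vee\Sigma^1$. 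As this index is nontrivial, the Wa\.zewski Property~\ref{th-1} guarantees that $\mathrm{Int}(N)$ contains a nonempty isolated invariant set $S$, distinct from $A_1$ and $C_1$.

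For part (ii), I would isolate one attractor together with $S$ at a time. Digging the tube $V_1$ out of $U$ and keeping the closed disk $D$ bounded by $C_1^{-}$ (which contains $A_1$ and $S$), the exit set is $C_1^{-}$ and $D/C_1^{-}\simeq S^2$, so the invariant set $A_1\cup S$ carries index $\Sigma^2$. Were there no connecting orbit between $S$ and $A_1$, the Wedge Sum Property~\ref{th-2} would force this index to equal $H(A_1)\vee H(S)=\Sigma^0\vee(\Sigma^2\vee\Sigma^1)$, a contradiction; hence a connecting orbit from $S$ to $A_1$ exists. Symmetrically, digging $U_1$ out of $U$ leaves the annulus $U\setminus U_1$ with exit set $\partial U_1$ only (the flow enters through $\partial U$), whose quotient is contractible, so $S\cup C_1$ has index $\bar{0}$; since $H(C_1)=\Sigma^1\vee\Sigma^0$ for an attracting cycle, the absence of a connecting orbit would give $H(S)\vee H(C_1)\neq\bar{0}$, again a contradiction, so a connecting orbit from $S$ to $C_1$ exists as well.

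The hard part will be the bookkeeping around the periodic orbit rather than any deep new idea: one must justify the positively invariant annular tube $V_1$ with inward flow on \emph{both} boundary circles (via a Lyapunov function, or the Floquet/stable-manifold structure of the attracting cycle) and then verify that the resulting pair $(N,N_0)$ genuinely satisfies the index-pair axioms. The other delicate point is the homotopy computation $N/N_0\simeq S^2\vee S^1$ — specifically, that collapsing both boundary circles to the \emph{same} basepoint, rather than to two distinct points, is what produces the extra $\Sigma^1$ summand and hence the $\Sigma^2\vee\Sigma^1$ asserted in the statement.
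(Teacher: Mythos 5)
Your proposal is correct and follows essentially the same route as the paper's own proof: the paper likewise takes the annulus between a small attracting neighborhood $\partial U_1$ of $A_1$ and the inner boundary curve $L_1$ (your $C_1^{-}$) of a positively invariant tube around $C_1$ as the isolating neighborhood, collapses the two exit circles to get $\Sigma^2\vee\Sigma^1$, and then applies the Wedge Sum Property to the disk bounded by $L_1$ (index $\Sigma^2$) and, symmetrically, to $U$ with $U_1$ removed, to force the two connecting orbits. Your write-up merely makes explicit the homotopy computation and the second ``similarly'' step that the paper leaves implicit.
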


\begin{proof}
	The proof is similar to that of Theorem~\ref{th-2-1}.
	\par (i) Firstly, we construct an isolating neighborhood of invariant set $S$. We can
	find an open neighborhood $U_1$ of $A_1$ since $A_1$ is asymptotically stable, such that the orbits
	passing through the boundary $\partial U_1$ will flow into the interior of $U_1$ and be
	attracted by $A_1$ in the end. Besides, we can also find an annular neighborhood $R_1$ of $C_1$,
	which is bounded by two simple closed curves $L_1$ and $L_2$, as shown in Fig.~\ref{fig-17},
	such that the orbits passing through the boundary points will run into the interior of $R_1$ and
	be attracted by attractor $C_1$. Then the annular region bounded by curves $\partial U_1$ and $L_1$
	is an isolating neighborhood, and the exit set is precisely composed of two boundary curves $\partial U_1$ and $L_1$.
	So its Conley index is $\Sigma^2\vee\Sigma^1$ computed by collapsing the exit set into a point. By the Wa\.zewski
	Property~\ref{th-1} of Conley index, we obtain that its isolated invariant set is not empty, and there must
	exist other isolated invariant set $S$ except two attractors.

	(ii) The region bounded by curve $L_1$ is an isolating neighborhood, and its Conley index is $\Sigma^2$,
	which does not equal the wedge sum of the two Conley indices of invariant $A_1$ and $S$,
	$\Sigma^0\vee(\Sigma^2\vee\Sigma^1)$. So by Wedge Sum Property~\ref{th-2} of Conley index, in addition to
	$S$ and $A_1$, there must be at least one connecting orbit from $S$ to $A_1$. Similarly, we
	also obtain the existence of connecting orbit from $S$ to $C_1$.The proof is completed.
\end{proof}

\begin{figure}[tbhp]
	\centering
	\includegraphics[width=0.5\textwidth]{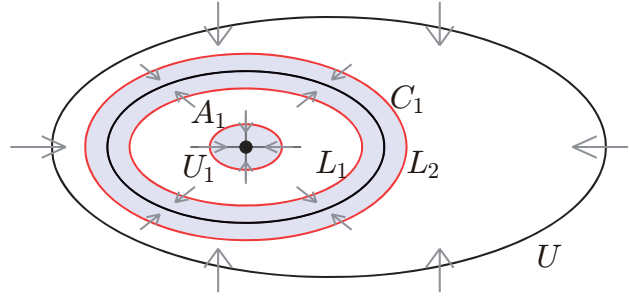}
	\caption{Diagram of bistable structure with a cycle attractor $C_1$ and a one-point attractor $A_1$,
		where $A_1$ is inside of $C_1$.}
	\label{fig-17}
\end{figure}

We should note that the system considered in \textbf{H5} and \textbf{H6} contains the periodic orbits,
which are a kind of loop structure, so we do not consider the assumption \textbf{H3}(or \textbf{H3$^\prime$})
in the current situation. Nevertheless, considering \textbf{H2} is still rational.  Given this, we have the following result.

\begin{theorem}\label{th-2-9-new}
	If the system (\ref{eq-2-1}) is two-dimensional and \textbf{H5} and \textbf{H2} are satisfied,
	then
	\par(i) There must be a finite and even number of equilibria in $S$, namely $2k$, $k=0,1,\cdots$,
	in which $k$ equilibria are saddles, and other $k$ equilibria are unstable nodes or focuses.
	\par(ii) There is one and only one loop structure in $S$, and it contains $A_1$ in its interior.
	\par(iii) Invariant set $S$ is connected.
	\par(iv) Denote the isolated invariant set within the region bounded by cycle attractor
	$C_1$ and boundary $\partial U$ by $S_2$. Then it must contain a finite and even number of
	equilibria, which is the same as (1).
\end{theorem}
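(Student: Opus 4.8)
The plan is to take the four assertions in the order (i), (iv), (ii), (iii), since the two counting statements are the cheapest and the connectedness in (iii) is the one that really uses the full strength of the hypotheses.

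I would first dispose of (i) and (iv) together by Poincar\'e-index (winding-number) bookkeeping, exactly in the spirit of Theorem~\ref{th-2-2}(i). By \textbf{H2} and the Andronov-Pontryagin criterion every equilibrium in $U$ is hyperbolic, so Lemma~\ref{lemma-2-1-finite} gives finiteness of the equilibria in $S$ and in $S_2$. For the parity I would use that a periodic orbit has index $+1$ and that the index of a closed curve equals the sum of the indices of the equilibria it encloses. Applied to $C_1$, which encloses $A_1$ together with the equilibria of $S$, this reads $+1=(+1)+\sum_{S}\mathrm{ind}$, so the $S$-equilibria have index-sum $0$; since \textbf{H5} forbids sinks, the non-saddles are unstable nodes or foci of index $+1$, and balancing the $-1$'s against the $+1$'s forces $k$ saddles and $k$ sources, i.e. $2k$ in all. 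For (iv) I would run the identical count on $\partial U$ (index $+1$ by the inward-pointing boundary condition), which encloses $A_1$, the $S$-equilibria and the $S_2$-equilibria; subtracting the known contributions $+1$ from $A_1$ and $0$ from $S$ leaves index-sum $0$ for $S_2$, hence again an even split into equal numbers of saddles and sources.

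For (ii) I would follow the four-step scheme of Theorem~\ref{th-2-2}(ii). The classification of a possible loop (an unstable limit cycle, or a cycle of saddles and sources joined by heteroclinics running from the sources to the saddles) is verbatim the Step~1 argument there, using \textbf{H2} to exclude stable cycles, homoclinics and saddle--saddle connections. Existence of a loop is the new point here: the immediate basin of $A_1$ is open, contains $A_1$, and omits a neighbourhood of $C_1$, so its boundary is a nonempty compact invariant subset of the open annulus between $A_1$ and $C_1$ that separates $A_1$ from $C_1$; such a separating invariant continuum must wind around $A_1$ and therefore contains a loop enclosing $A_1$. Uniqueness I would obtain as in Steps~3--4 of Theorem~\ref{th-2-2}: two nested loops around $A_1$ bound a sub-annulus on whose inner boundary the flow points outward and on whose outer boundary it points inward, so the trapped orbits would need $\omega$-limits that \textbf{H5}, \textbf{H2} and Poincar\'e--Bendixson rule out.

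The hard part is (iii), because the Conley index \emph{alone} does not force connectedness: a hyperbolic source and saddle joined by a single heteroclinic form an isolated invariant set of index $\bar{0}$, so abstractly one could try to bolt such $\bar{0}$-pieces onto the loop without disturbing $H(S)=\Sigma^2\vee\Sigma^1$. The device I would use to exclude this is that the isolating neighbourhood $N$ of $S$ is the annulus between $\partial U_1$ and $L_1$, and that the flow leaves $N$ across \emph{both} boundary curves (inward toward $A_1$ across $\partial U_1$, outward toward $C_1$ across $L_1$)---this is exactly why the exit set was the whole boundary and the index came out $\Sigma^2\vee\Sigma^1$. Hence $N$ is positively invariant for the time-reversed flow with boundary pointing strictly inward, so $S$ is the global attractor of the reversed flow in $N$ and every point of $N$ tends to $S$ in backward time. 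Since an attractor is shape-equivalent to its basin, $\check{H}^{0}(S)=\check{H}^{0}(N)=\mathbb{Z}$, so $S$ is connected (and, as a bonus, $\check{H}^{1}(S)=\mathbb{Z}$, consistent with the single loop of (ii)). Read dynamically, the same fact powers a hands-on argument: because both boundary curves are exit sets, the backward orbit of any point of $S$ can never cross $\partial N$, so the second stable branch of every saddle in $S$ stays in $N$ for all negative time and thus has its $\alpha$-limit inside $S$; tracing these branches ties each saddle and source by a connecting orbit into the unique loop around $A_1$, leaving no room for a detached $\bar{0}$-component. I expect that verifying this ``no escape in backward time forces attachment to the loop'' step, and making the shape equivalence rigorous for a possibly non-smooth invariant continuum, is where the real work lies.
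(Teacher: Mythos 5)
Your counting arguments for (i) and (iv) are exactly the paper's intended route (the paper omits both proofs, referring to the winding-number bookkeeping of Theorem~\ref{th-2-2}), and your classification and uniqueness steps for (ii) follow the paper's Steps 1, 2 and 4. Where you genuinely diverge is in the existence of the loop and in (iii). For existence, the paper argues by Conley index: if $S$ contained no loop it would decompose into simply connected unstable pieces each of index $\Sigma^2$, so $H(S)$ would be a wedge of copies of $\Sigma^2$, contradicting $H(S)=\Sigma^2\vee\Sigma^1$ from Theorem~\ref{th-2-5}. For (iii), the paper assumes disconnectedness, notes any component detached from the loop has index $\bar{0}$, and invokes Lemma~\ref{le-2-2} to produce an orbit approaching that component which can originate nowhere. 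Your (iii) is cleaner and, I would say, more rigorous than the paper's: since \emph{both} boundary curves of the annulus $N$ are exit sets, $N$ is positively invariant under the time-reversed flow with strictly inward boundary behaviour, so $S=\mathrm{Inv}(N)=\bigcap_{t\ge 0}\varphi_{-t}(N)$ is a nested intersection of compact connected sets and is therefore connected; you do not even need shape theory or \v{C}ech cohomology for this, and the argument correctly does not transfer to the two-point-attractor case, where $\partial U$ is an entrance set.

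The one genuine gap is the existence step of (ii). You assert that the boundary of the immediate basin of $A_1$ is a compact invariant continuum separating $A_1$ from $C_1$ and ``therefore contains a loop enclosing $A_1$.'' A separating invariant continuum in the plane need not contain a simple closed curve (the Warsaw circle is the standard counterexample: it separates the plane but contains no loop), so this implication is not free. To close it you must use the structural hypotheses: by \textbf{H2} and Lemma~\ref{lemma-2-1-finite} the basin boundary consists of finitely many hyperbolic equilibria, finitely many closed orbits, and invariant manifolds, with no homoclinic orbits or saddle--saddle connections, and you would need to argue that such a finite configuration can separate the annulus only if it contains a genuine cycle of the two admissible types. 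That is doable but is real work you have not done; the paper's Conley-index contradiction ($H(S)$ would be a wedge of $\Sigma^2$'s, not $\Sigma^2\vee\Sigma^1$) sidesteps this topological subtlety entirely, at the cost of its own informal step of decomposing a loop-free $S$ into index-$\Sigma^2$ pieces. Either way, this is the step that needs to be made precise before your version of (ii) stands.
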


\begin{proof}
	(i) The proof of (i) is omitted here since its strategy and process are analogous
	to that of Theorem~\ref{th-2-2}.
	
	(ii) The proof of (ii) is divided into the following four steps:
	\par The first step is to give the possible loop structure in $S$.
	There are two candidate types, an unstable limit cycle and the loop structure consisting of
	saddles, unstable nodes or focuses, and heteroclinic orbits. The loop must be an unstable
	limit cycle if there are no equilibria on it. Moreover, it must be the second type of loop if there
	are some equilibria on it. That is because there is no possibility of an additional stable equilibrium
	in $S$ by \textbf{H5}, and also, no homoclinic orbit of the saddle flow to itself and the
	heteroclinic orbit of saddle flow to another saddle according to the structure stability
	assumption in \textbf{H2}.

	The second step is to illustrate the position of the loop, that is, the loop, if any, must contain
	$A_1$ inside it. We prove it by contradiction. We first assume that the region bounded by the
	loop does not contain $A_1$. Then in any small neighborhood inside either loop mentioned in
	the previous step, there must be an infinite number of orbits, which further flow into the interior
	of the loop. Then the $\omega-$limit sets of these infinite orbits are either equilibria or periodic orbits.
	The periodic orbit is impossible because the stable periodic orbit cannot appear again due to \textbf{H5},
	and the unstable periodic orbit cannot be served as a $\omega-$limit set. Besides, the equilibria
	are also impossible. Because unstable focus or nodes cannot be used as a limit set as well, and
	stable equilibria cannot reappear. Saddles are also impossible because their total number is finite,
	and it is impossible for an infinite number of orbits with a finite number of saddles as their $\omega-$limit
	set. So the assumption we made is not correct, and the loop must contain $A_1$ inside of itself.

	The third step is to prove the existence of the loop by contradiction as well.
	First, we assume that a loop structure of any kind does not exist. Then the invariant set
	$S$ must consist of saddles, unstable nodes or focuses, and heteroclinic orbits according to
	Theorem~\ref{th-2-9-new}(i). Next, freely choose a saddle in $S$, such
	as $s_1$, whose two stable manifolds must come from distinct unstable nodes or focuses, say
	$n_1$ and $n_2$. We continue to check if both equilibria are connected to other saddles.
	It may assume that $n_1$ is not, but $n_2$ is connected to another saddle, denoted by $s_2$.
	Next, we continue to consider the stable manifold of $s_2$. By analogy, we can always end
	this process because the number of equilibria in $S$ is finite. In the end, we can get
	a simply connected invariant set similar to  Fig.~\ref{figure-18}(a), and its Conley index is $\Sigma^2$
	due to its asymptotical instability. We continue to count all other saddles, then get
	a finite number of the invariant set like Fig.~\ref{figure-18}(a). If this process does not pick up all the unstable
	nodes or focuses, they can be considered as unstable invariant set disconnected from each other, and
	their Conley indices are still $\Sigma^2$. Finally, we get that the invariant sets $S$ is a collection
	of invariant set just like Fig.~\ref{figure-18}(a) and unstable equilibria. In this view, the Conley index of
	$S$ is the wedge sum of a finite number of $\Sigma^2$, which contradicts the Theorem~\ref{th-2-5}(i).
	So there must be a loop structure.

	The final step is to prove the uniqueness of the loop. We assume this is not true. That means
	there are two or more loops. From the second step, we know that $A_1$ must be contained in the
	smallest loop, which is then contained in the second smallest loop again. And so forth, just like
	the Russian doll. These loops are finite due to the compactness of region $U$ and
	the structural stability of the system. Next, let us consider the two outermost loops, as shown in
	Fig.~\ref{figure-18}(b) (other cases are similar). Any small inner neighborhood of the outer loop must
	have an infinite number of orbits that further flow into the interior of this loop. Their $\omega-$limit sets
	are either equilibria or loops. However, neither is possible through the same analysis as the second step.
	So the assumption is not valid, and the loop structure is unique.

	\begin{figure}[tbhp]
		\centering
		(a)
		\includegraphics[width=0.4\textwidth]{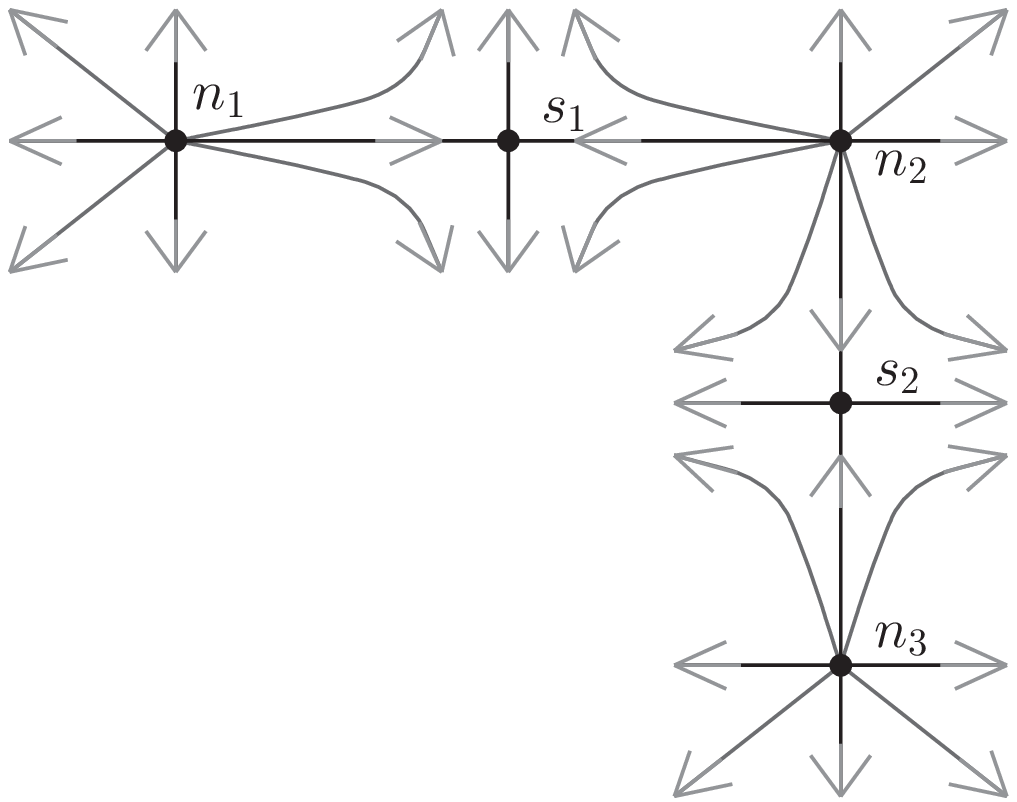}
		\quad
		(b)
		\includegraphics[width=0.45\textwidth]{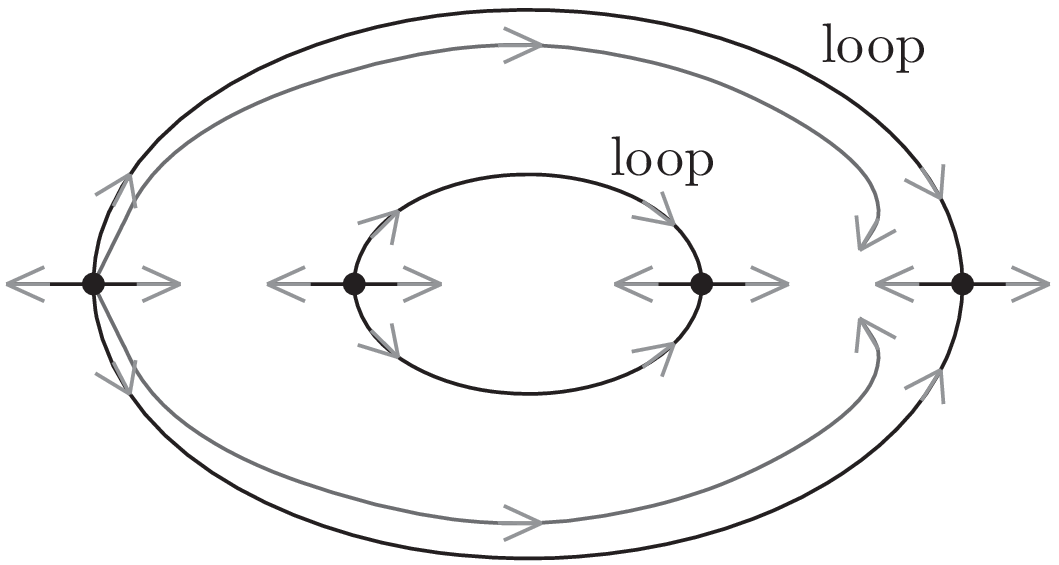}
		\caption{(a)A schematic diagram of one possible unstable invariant set consists of saddles
			and unstable nodes or focuses. (b)Schematic diagram of the loop structure.}
		\label{figure-18}
	\end{figure}
	
	(iii) We assume that the invariant set $S$ is not connected. We know that $S$ contains a loop,
	and the Conley index both of this loop and $S$ is $\Sigma^2\vee\Sigma^1$. Therefore, if there are
	other invariant sets, which are not connecting to the loop and also disconnected from each other,
	then their Conley index must be $\bar{0}$ by Property~\ref{th-2}. From (i), we know that $S$ only
	can contain unstable nodes or focuses, saddles, and heteroclinic orbits. Also, Lemma~\ref{le-2-2}
	shows that there must be an orbit that is approaching this invariant set. Because the invariant
	sets are disconnected from each other, this orbit cannot come from $S$, and it cannot come from
	other invariant sets and $A_1$ as well. So the contradiction occurs, and $S$ must be connected.
	
	(iv) We omit the proof of (iv) again because it can be proved via the winding number and the same idea
	as Theorem~\ref{th-2-2}. The proof is completed.
\end{proof}

Similar to the case of two one-point attractors with the loop structure appears, the loop in
Theorem~\ref{th-2-9-new} is the cycle separatrix. Almost all the orbits outside and inside of the separatrix flow
towards the cycle attractor $C_1$ and the one-point attractor $A_1$, respectively.

\begin{theorem}\label{th-2-10}
	If the system (\ref{eq-2-1}) is two-dimensional and \textbf{H6} is satisfied, then
	\par(i) There is an isolated invariant set, denoted by $S_1$, in the region bounded by cycle
	attractor $C_1$, and its Conley index is $\Sigma^2$.
	\par(ii) There is also an isolated invariant set, denoted by $S_2$, outside the cycle
	attractor $C_1$ but except the one-point attractor $A_1$, and its Conley index is $\Sigma^1$.
	Besides, there are connecting orbits from $S_2$ to $C_1$ and $A_1$, respectively.
\end{theorem}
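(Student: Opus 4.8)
The plan is to mirror the two constructions already used for Theorem~\ref{th-2-1} and Theorem~\ref{th-2-5}: I would build isolating neighborhoods by excising small attracting neighborhoods of the known attractors from $U$, read off each Conley index by collapsing the exit set to a point, invoke the Wa\.zewski Property~\ref{th-1} to guarantee that the remaining invariant set is nonempty, and finally detect connecting orbits by choosing auxiliary regions whose index fails to factor as the wedge sum predicted by the Wedge Sum Property~\ref{th-2}. Throughout, the attracting cycle $C_1$ is surrounded by an annular neighborhood $R_1$ bounded by an inner curve $L_1$ (inside $C_1$) and an outer curve $L_2$ (outside $C_1$), on both of which the flow points into $R_1$ toward $C_1$; and $A_1$ carries a small attracting disk neighborhood $U_1$, exactly as in Theorem~\ref{th-2-5}.

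For part (i), I would take the closed disk $D_{L_1}$ bounded by $L_1$ as the isolating neighborhood. Since $C_1$ is asymptotically stable and $A_1$ lies \emph{outside} $C_1$ by \textbf{H6}, there is no attractor inside $C_1$, and the flow on all of $L_1$ points outward toward $C_1$, so the entire boundary $L_1$ is the exit set. Hence $(D_{L_1},L_1)$ is an index pair, and collapsing $L_1$ to a point yields $D_{L_1}/L_1\simeq S^2$, giving Conley index $\Sigma^2$. The Wa\.zewski Property~\ref{th-1} then forces a nonempty isolated invariant set $S_1$ inside $C_1$.

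For part (ii), I would isolate $S_2$ by the doubly-punctured disk $N=U\setminus(D_{L_2}\sqcup U_1)$, where $D_{L_2}$ is the closed region bounded by $L_2$ (so it contains $C_1$ and everything inside it). On $\partial U$ the flow enters, while on $L_2$ and on $\partial U_1$ it leaves $N$ (toward $C_1$ and toward $A_1$ respectively), so the exit set is $N_0=L_2\sqcup\partial U_1$; this is exactly the topological situation of Theorem~\ref{th-2-1}, so collapsing $N_0$ gives Conley index $\Sigma^1$ and, via Wa\.zewski, a nonempty $S_2$. To produce the connecting orbit $S_2\to A_1$, I would use the annulus $U\setminus D_{L_2}$, which contains only $A_1$ and $S_2$ with exit set $L_2$; collapsing the inner circle of an annulus gives a contractible space, index $\bar{0}$, which cannot equal $\Sigma^0\vee\Sigma^1$, so a connecting orbit must exist, and since $A_1$ is an attractor it runs from $S_2$ into $A_1$. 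For $S_2\to C_1$, I would instead use $N'=U\setminus(U_1\sqcup D_{L_1})$, which contains exactly $C_1$ and $S_2$ (both $A_1$ and $S_1$ excised), with exit set only $\partial U_1$, because the flow enters $N'$ across $L_1$; collapsing $\partial U_1$ caps that hole and turns the doubly-holed disk into an annulus, of pointed homotopy type $\Sigma^1$, which cannot be the wedge sum $(\Sigma^0\vee\Sigma^1)\vee\Sigma^1$ of the indices of the attracting cycle $C_1$ and $S_2$, again forcing a connecting orbit, directed from $S_2$ into $C_1$.

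The step I expect to be the main obstacle is the bookkeeping of exit sets: one must verify the flow direction on each auxiliary curve ($L_1$, $L_2$, $\partial U_1$) and, crucially, choose each auxiliary region so that it isolates \emph{exactly} the pair of invariant sets one wishes to compare --- excising both $A_1$ and $S_1$ for the $S_2\to C_1$ connection is what makes the wedge-sum comparison decisive. A secondary subtlety is the correct Conley index of the attracting cycle $C_1$ itself, namely $\Sigma^0\vee\Sigma^1$ (the pointed homotopy type of $C_1$ with a disjoint basepoint), since the connecting-orbit arguments compare against it; once these flow directions and indices are pinned down, the Wa\.zewski and Wedge Sum properties close every case exactly as in Theorems~\ref{th-2-1} and \ref{th-2-5}.
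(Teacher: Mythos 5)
Your proposal is correct and follows the paper's strategy almost verbatim for part (i), for the existence of $S_2$, and for the connecting orbit $S_2\to A_1$: the same curves $L_1$, $L_2$, $\partial U_1$, the same index pairs $(D_{L_1},L_1)$ and $\bigl(U\setminus(\mathrm{Int}(L_2)\sqcup U_1),\,L_2\sqcup\partial U_1\bigr)$, the same indices $\Sigma^2$ and $\Sigma^1$, and the same appeal to Properties~\ref{th-1} and~\ref{th-2}. The one place you deviate is the connecting orbit $S_2\to C_1$: the paper simply ``digs out $U_1$'' and works with $U\setminus U_1$, whose maximal invariant set still contains $S_1$ (and any $S_1$-to-$C_1$ connections), so the failure of the wedge-sum identity there only guarantees \emph{some} connecting orbit in that region, not a priori one attached to $S_2$. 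Your choice of $N'=U\setminus(U_1\sqcup D_{L_1})$, which excises both $A_1$ and $S_1$ so that $\mathrm{Inv}(N')$ can only consist of $C_1$, $S_2$, and orbits between them, makes the comparison $H(\mathrm{Inv}(N'))=\Sigma^1\neq(\Sigma^0\vee\Sigma^1)\vee\Sigma^1$ decisive for exactly the pair in question; your identification of the exit set of $N'$ as $\partial U_1$ alone (since the flow enters across $L_1$) and of $H(C_1)=\Sigma^0\vee\Sigma^1$ are both correct. So your route is essentially the paper's, tightened at its loosest step.
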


\begin{proof}
	(i) Known from \textbf{H6}, $C_1$ is an asymptotically stable periodic orbit. Therefore,
	there must be a simple closed curve, denoted by $L_1$, in the small inner neighborhood of $C_1$,
	so that the orbits passing through $L_1$ will flow to its outside
	and eventually approach $C_1$, as shown in Fig.~\ref{figure-5-new}.
	Thus, the closed region bounded by $L_1$ together with
	the curve $L_1$ form an index pair, and its Conley index is $\Sigma^2$, which is not
	equal to $\bar{0}$. So, according to Property~\ref{th-1}, there must be an isolated
	invariant set, $S_1$, in the region bounded by $L_1$, and it is also within the region bounded
	by $C_1$.
	
	\begin{figure}[tbhp]
		\centering
		\includegraphics[width=0.5\textwidth]{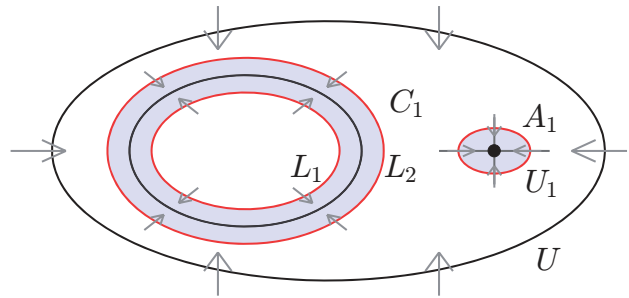}
		\caption{Diagram of bistable structure with a cycle attractor $C_1$ and a one-point attractor $A_1$,
			where $A_1$ is outside of $C_1$.}
		\label{figure-5-new}
	\end{figure}
	
	(ii) Similar to the $L_1$ above, we can also find a simple closed curve $L_2$ in the small
	outer neighborhood of $C_1$ so that the orbits passing through $L_2$ will flow to $C_1$.
	Besides, for $A_1$, we can find an open neighborhood $U_1$ as well, similar to Theorem~\ref{th-2-1},
	so that the orbits passing through the boundary will flow to $A_1$. Next, we dig out the
	interior of $L_2$ along with $U_1$, resulting in a region with two holes. As a result, this
	region, together with the boundary of these two holes, constitutes an index pair, whose
	Conley index is not $\bar{0}$ but $\Sigma^1$. So there is also an isolated invariant set
	$S_2$ between $C_1$ and $A_1$.
	
	Finally, we can prove the existence of connecting orbits by the same idea as
	Theorem~\ref{th-2-1}. Firstly, digging out the region bounded by $L_2$ forms the index pair
	$(U\setminus \mathrm{Int}(L_2), L_2)$, which proves the existence of connecting orbit from $S_2$
	to $A_1$.  Similarly, digging out the region $U_1$ shows the presence of connecting orbit
	from $S_2$ to $C_1$. The proof is completed.
\end{proof}

When the system is two-dimensional, and \textbf{H6, H2} are satisfied, we can refer to the
situation of two one-point attractors. That is because if we see the cycle attractor from its outside,
the cycle attractor and the one-point attractor have the same dynamics. Moreover, if we contract
homotopically the region bounded by the cycle attractor, it would be a one-point attractor. In this case,
there can be different separatrices. When the unstable loop structure appears, which must
around $C_1$ or $A_1$, this loop structure would be the cycle separatrix. However, when
the unstable loop structure does not appear, the separatrix would be similar to the situations
discussed below of Theorem~\ref{th-2-3} and Theorem~\ref{th-2-4}.

Corresponding to the previous subsection, the assumption \textbf{H5} and \textbf{H6} can also
be relaxed to the followings, respectively.

\begin{description}
	\item[\textbf{H5$^\prime$}] There exists a closed region $U$ in the domain of system (\ref{eq-2-1}), such that
	there are precisely two attractors, an asymptotically stable periodic orbit $C_1$ and
	an asymptotically stable equilibrium $A_1$, where $A_1$ is inside $C_1$, and on the boundary
	$\partial U$, all orbits run from outside into interior either immediately or firstly run along the
	boundary for a while then into its interior.
	\item[\textbf{H6$^\prime$}] There exists a closed region $U$ in the domain of system (\ref{eq-2-1}), such that
	there are precisely two attractors, an asymptotically stable periodic orbit $C_1$ and
	an asymptotically stable equilibrium $A_1$, where $A_1$ is outside $C_1$, and on the boundary
	$\partial U$, all orbits run from outside into interior either immediately or firstly run along the
	boundary for a while then into its interior.
\end{description}

Here, we do not consider the coexistence of a cycle attractor and a one-point attractor in higher-dimensional
systems. The first reason is that the dynamic behavior around the periodic orbit in a high-dimensional
system is complicated. The second reason is that there can be strange attractors in high-dimensional
systems, which lead to more combinations of bistables.

\section{Examples and applications}
\label{sec-3}

In this section, some examples are provided to indicate that
the bistable structures we considered are common phenomena in
certain biological systems. At the same time, some applications are given
to illustrate how to use the conclusions of the previous section.

\subsection{ Some examples in biological systems}

As mentioned in the introduction, the bistable structure can occur in many other
biological models, such as the insect outbreak model \cite{Ludwig1978}, genetic control model \cite{Griffith1971},
interactive model between two species~\cite{Yan2013,Yue2013,Tainaka2006,Tainaka2016},
epidemic model~\cite{Wang2006,Zhang2008,Zhou2012} and so on. Here, we briefly introduce
four biological models, all of which display the bistable structure we considered.

The first one is the spruce budworm population model that was proposed and analyzed by Ludwig et al.
\cite{Ludwig1978,Ludwig1997,Strogatz2018}. This model is aimed at studying the outbreak of spruce
budworm in eastern Canada. This kind of insect is a serious pest, for it can defoliate and kill most
of the fir trees in the forest in about four years.

This model can be written as follows:

\begin{equation}\label{eq-3-7}
	\frac{dN}{dt} = RN\left( 1-\frac{N}{K} \right) - \frac{BN^2}{A^2+N^2}.
\end{equation}
Where $N(t)$ denotes the budworm population at time $t$. In this model, $N(t)$ is assumed to have a
logistic growth rate, as well as a saturated mortality rate due to predation, chiefly by birds.
By analysis, we find that model (\ref{eq-3-7}) can have four equilibria, the origin $o$, and
the other three positive equilibria, as shown in Fig.~\ref{figure-6}. There are precisely two
stable equilibria, $a$ and $c$, which correspond to the refuge level and outbreak level of
budworm, respectively, and can be seen as two point-attractors. It is effortless to find a closed
interval covering $a$ and $c$ to
the right of origin as the region $U$ in \textbf{H1}. In this case, equilibrium $b$ acts as
a separatrix, since orbits to the left side of $b$ flow to attractor $a$, while orbits to
another side flow to $c$.

\begin{figure}[tbhp]
	\centering
	\includegraphics[width=0.5\textwidth]{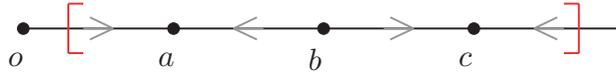}
	\caption{The bistable structure diagram for the model (\ref{eq-3-7}).}
	\label{figure-6}
\end{figure}

The second one is the genetic control model discussed by Griffith~\cite{Griffith1971,Strogatz2018}.
We all know that in a cell, genes are transcribed into message RNA (mRNA), and then mRNA acts as
a template to produce proteins. Here, this model assumes that the activity of the genes considered
is directly induced by a certain amount of protein for which it codes. In other words, genes control
the production of proteins, and in turn, proteins stimulate the genes. In this way, an autocatalytic
feedback process is formed. The model can be written in the following dimensionless form:

\begin{equation}\label{eq-3-8}
	\left\{
	\begin{aligned}
		\frac{dx}{dt} &= -ax + y,\\
		\frac{dy}{dt} &= \frac{x^2}{1+x^2} -by .\\
	\end{aligned}
	\right.
\end{equation}
Here variables $x(t)$ and $y(t)$ denote the concentration of mRNA and protein at time $t$,
respectively. Either rate of growth is proportional to the other, and both will be degraded, which
are controlled by parameters $a$ and $b$, respectively. We should note that
there is no equation for DNA since it is externally supplied in this process.

\begin{figure}[tbhp]
	\centering
	(a)
	\includegraphics[width=0.45\textwidth]{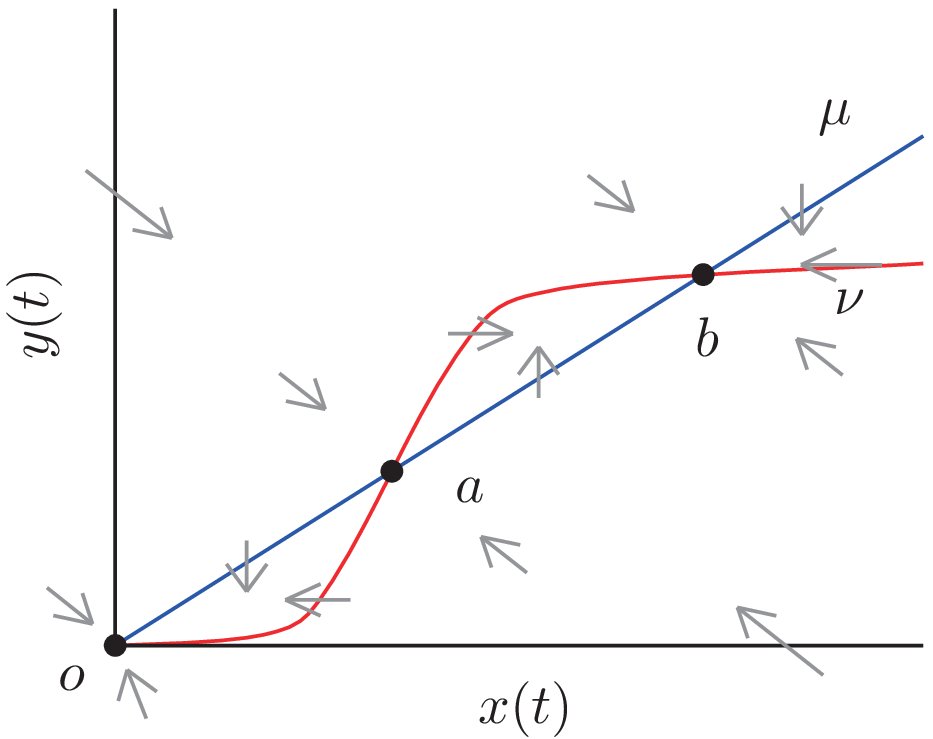}
	(b)
	\includegraphics[width=0.45\textwidth]{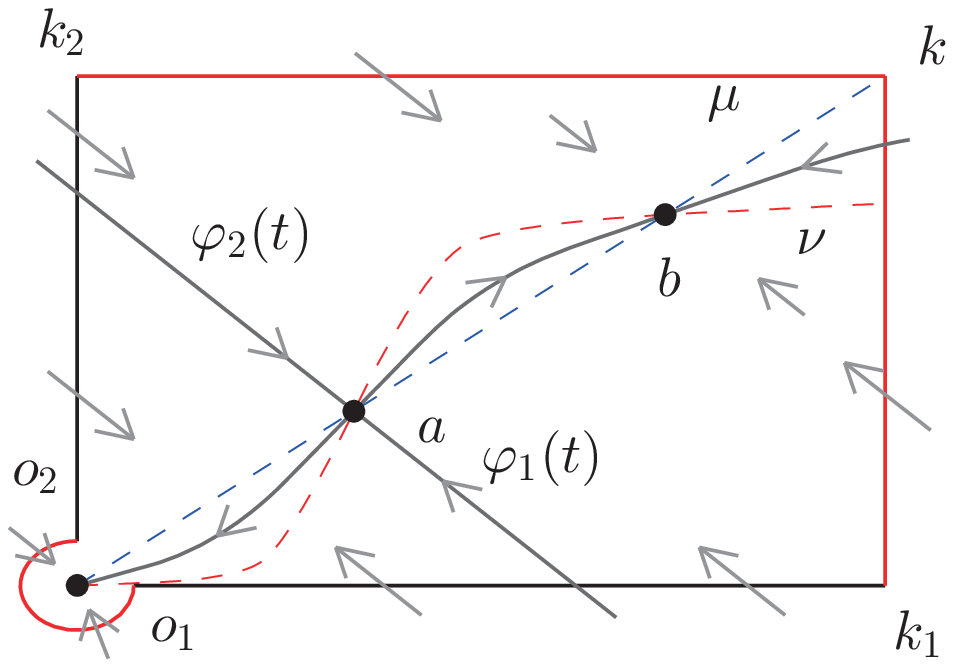}
	\caption{The bistable structure diagram for the model (\ref{eq-3-8}).
		(a)Line $\mu$ and Sigmoidal curve $\nu$ are $x$-nullcline and $y$-nullcline, respectively.
		Moreover, there are three equilibria here, $o$, $a$, and $b$, where $o$ and $b$ are stable.
		(b)The region $U$ is bounded by four line segments and one arc. Furthermore, the separatrix is
		composed of two orbits, $\varphi_1(t)$ and $\varphi_2(t)$, and equilibrium $a$.}
	\label{figure-7}
\end{figure}

For model (\ref{eq-3-8}), we can easily get its $x$-nullcline, $y=ax$, and $y$-nullcline,
$y=\frac{x^2}{b(1+x^2)}$. They are denoted by $\mu$ and $\nu$, respectively, as before.
Under certain conditions, line $\mu$ and Sigmoidal curve $\nu$ can intersect transversely, thus
obtaining three equilibria $o$, $a$, and $b$, which are schematically shown in Fig.~\ref{figure-7}(a).
By linearization, we obtain $o$ and $b$ are two stable equilibria, and $a$ is unstable.
Then we can apply the conclusions in the previous section by constructing a region $U$
that contains these two stable equilibria precisely. As a result, equilibrium $a$ must be a saddle.
This result also can be obtained by linearization. However, the purpose of doing
this here is to illustrate that this model shows the bistable structure we are interested in.

For this model, the separatrix is composed of two orbits, $\varphi_1(t)$ and $\varphi_2(t)$, and
one saddle, $a$. To the bottom left of the separatrix, the orbits flow to origin $o$, which
corresponds to the silent state of the gene. However, on the other side of the separatrix, the
orbits flow to equilibrium $b$, which corresponds to the activated state. Its biological interpretation
is that genes can be switched between two opposite states, silent and activated, depending on
the external environment. In other words, the concentrations of mRNA and the corresponding
protein determine if the gene is expressed.

The third model is a predator-prey system with group defense, which is studied by Freedman, Wolkowicz
\cite{Freedman1986}, and other researchers \cite{Xiao2001,Zhu2003}. Group defense describes
the phenomenon that predation gets harder, even prevented altogether, due to the growing ability
of the prey to defend or disguise themselves when their population is large enough, so it is considered
as a nonmonotonic functional response of predator to prey density. The study of
this model was about to exhibit the so-called paradox of enrichment, and, interestingly, the second case of
the bistable structure is observed here as well, namely, the bistable structure of the coexistence of a
cycle attractor and a one-point attractor.

The model is written as
\begin{equation}\label{eq-3-9-new}
	\left\{
	\begin{aligned}
		\frac{dx}{dt} &= xg(x,K)-yp(x),\\
		\frac{dy}{dt} &= y(-s+q(x)) .\\
	\end{aligned}
	\right.
\end{equation}
Here $x(t)$ and $y(t)$ denote the densities of prey and predator at time $t$, respectively.
Also, functions $g(x,K)$, $p(x)$, and $q(x)$ are assumed to be continuously differentiable and
are represent the specific growth rate of the prey in the absence of predation, predator
response function, and the conversion rate of prey to predator, respectively. For their requirements,
please refer to the literature \cite{Freedman1986}.  Here, we only consider the model with the following forms,
and it is also researched in \cite{Freedman1986}.


\begin{equation}\label{eq-3-10-new}
	\left\{
	\begin{aligned}
		\frac{dx}{dt} &= 2x(1-\frac{x}{K})-\frac{9xy}{x^2+3.35x+13.5},\\
		\frac{dy}{dt} &= y(-1+\frac{11.3x}{x^2+3.35x+13.5}).\\
	\end{aligned}
	\right.
\end{equation}

With the increasement of parameter $K$, the carrying capacity of prey, the model
exhibits different dynamics. When $K=4$, equilibrium $e_\lambda$ is  asymptotically stable,
and its position is illustrated in  Fig.~\ref{figure-10}(a).
If $K$ increases to $K=6$ due to enrichment, $e_\lambda$ becomes unstable, and a unique
asymptotically stable periodic orbit around it will appear. Then when $K$ is growing to $K=7$,
the periodic orbit will coalesce with a homoclinic orbit that is stable from within and unstable from
without. Here, we focus on the situation of $K=6$, as shown in Fig.~\ref{figure-10}.
By linearization, $e_K$ is an asymptotically stable node, $e_\mu$ is a saddle, and $e_\lambda$ is asymptotically
unstable, as shown in Fig.~\ref{figure-10}(b). Besides, by numerical analysis, there is also an asymptotically
stable periodic orbit around the $e_\lambda$, neighboring which orbits will approach it finally,
as shown in Fig.~\ref{figure-10}(c). In this case, the system exactly embeds the second bistable structure,
and the asymptotically stable periodic orbit is the cycle attractor $C_1$ and the asymptotically stable
equilibrium $e_K$ is the one-point attractor $A_1$.

\begin{figure}[tbhp]
	\centering
	(a)
	\includegraphics[width=0.45\textwidth]{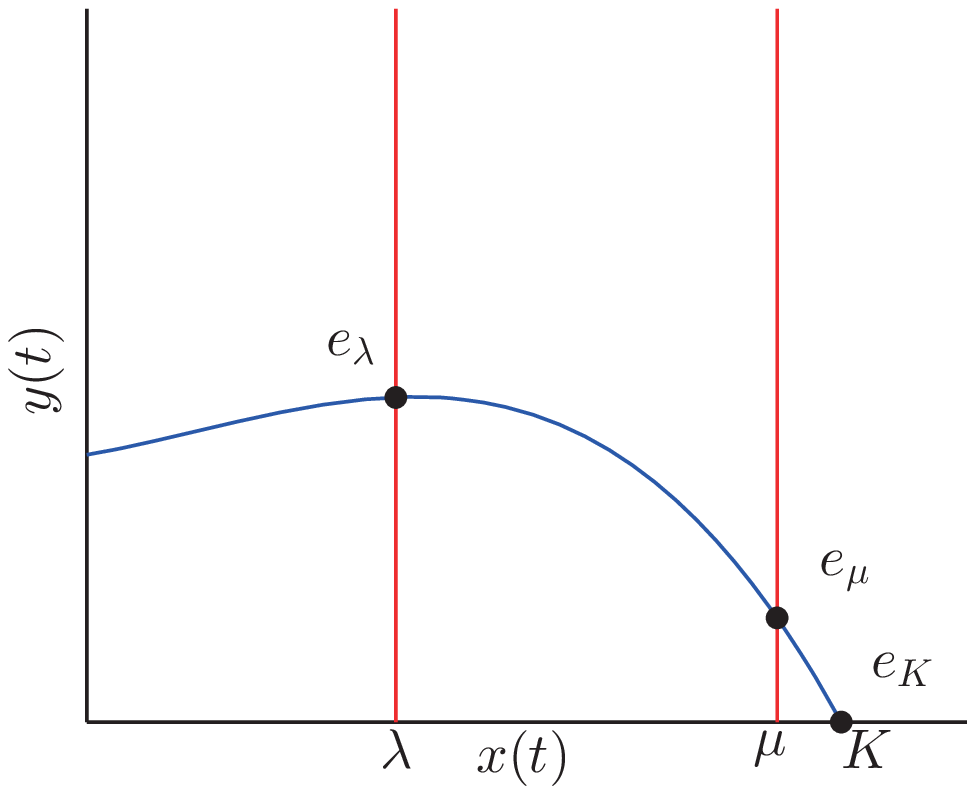}
	(b)
	\includegraphics[width=0.45\textwidth]{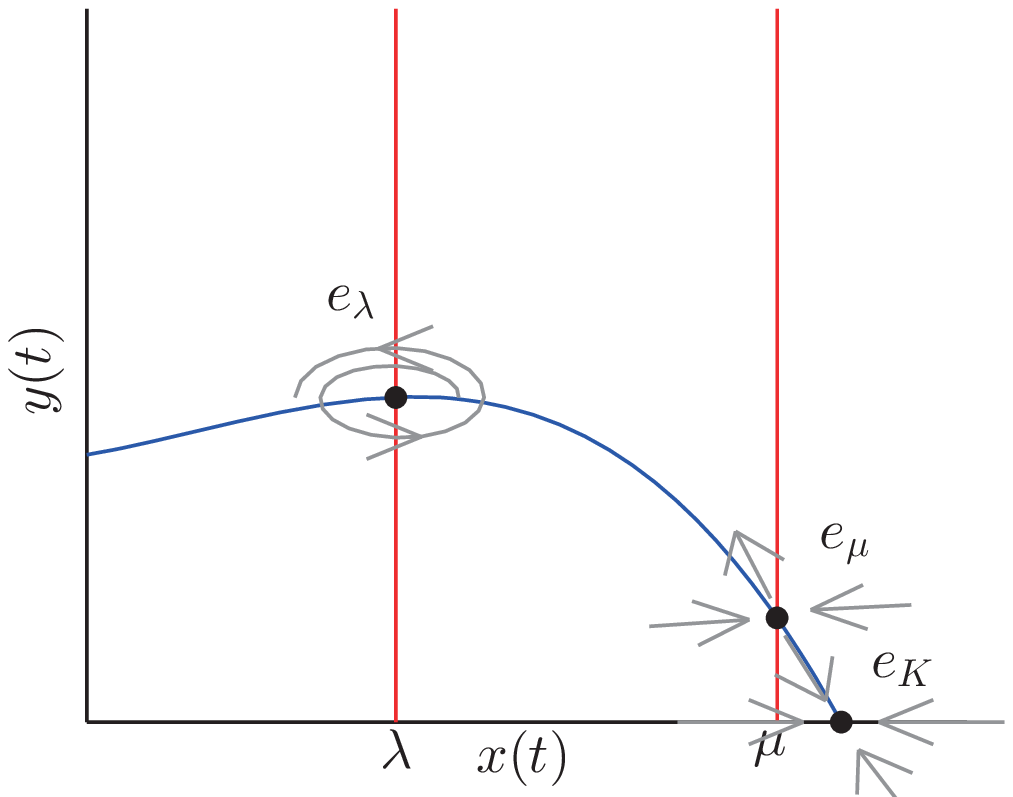}\\
	(c)
	\includegraphics[width=0.45\textwidth]{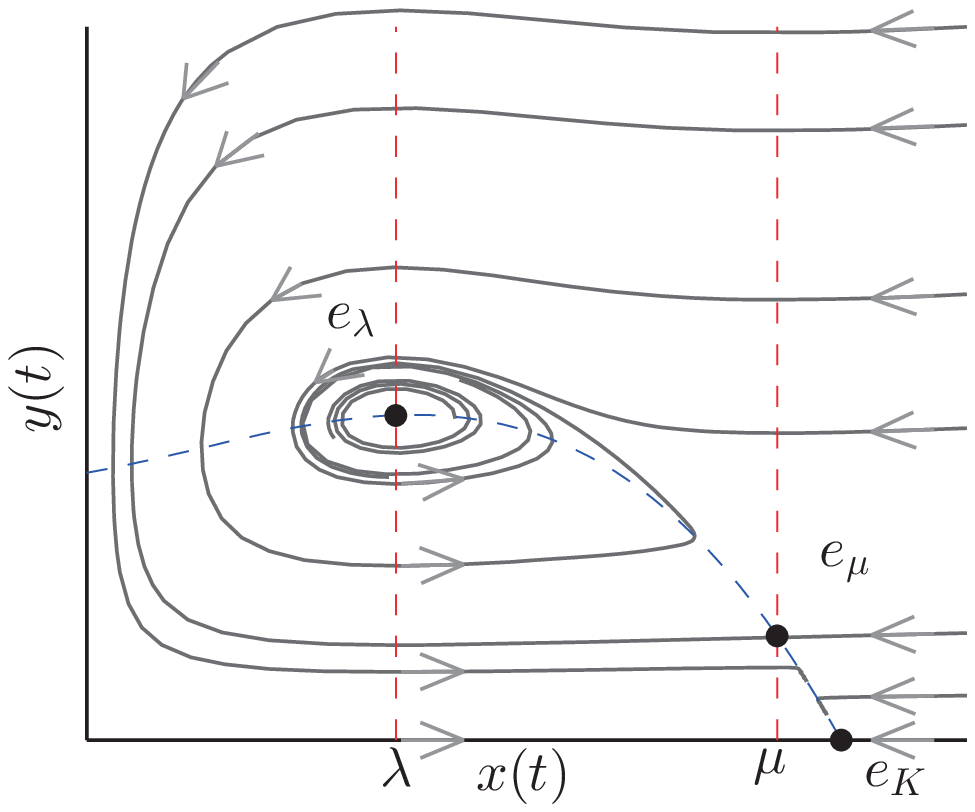}
	(d)
	\includegraphics[width=0.45\textwidth]{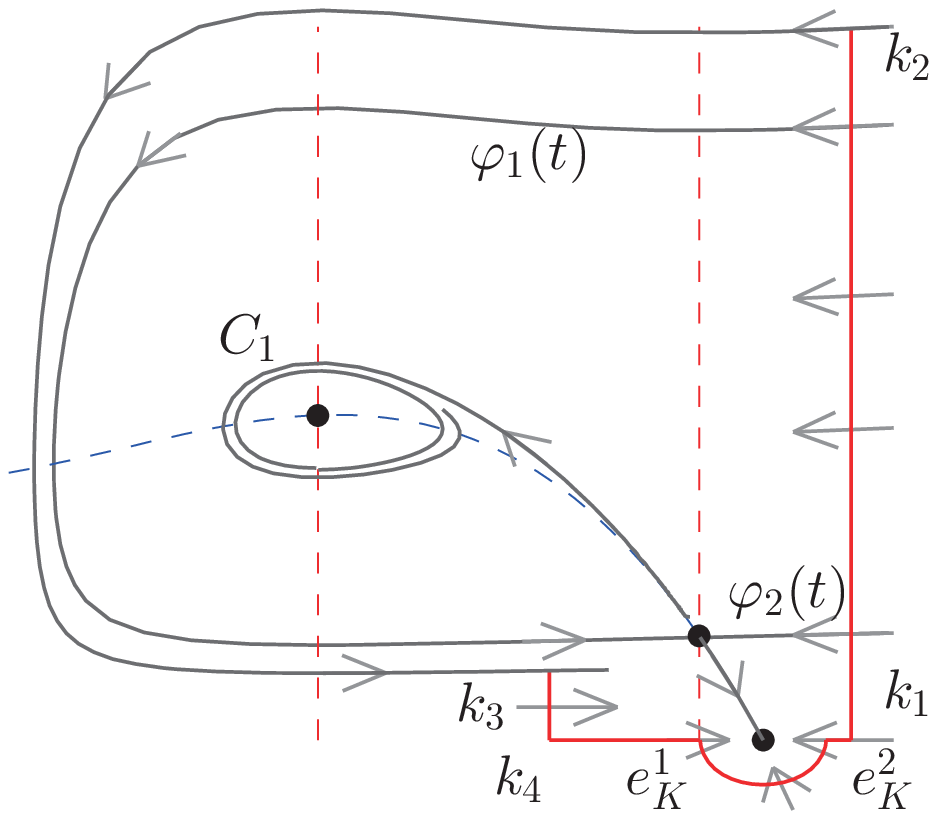}
	\caption{The bistable structure diagram for model \ref{eq-3-10-new}. Where, $K=6$.
		(a)The $y$-axis and the curve are $x$-nullclines, and $x$-axis,  $x=\lambda$, and $x=\mu$
		are $y$-nullclines.
		(b)Equilibrium $e_\lambda$ is asymptotically unstable, Equilibrium $e_\mu$ is a saddle, and
		Equilibrium $e_K$ is asymptotically stable.
		(c)Numerical solutions with different initial values are drawn.
		(d)The region $U$ required is constructed. Orbits $\varphi_1(t)$, $\varphi_2(t)$, and
		equilibrium $e_\mu$ form the separatrix.}
	\label{figure-10}
\end{figure}

The region $U$ required in \textbf{H6$^\prime$} can be constructed as illustrated in  Fig.~\ref{figure-10}(d),
which is bounded sequentially by line segment $\overline{k_1k_2}$, orbit segment $\widetilde{k_2k_3}$,
line segments $\overline{k_3k_4}$, $\overline{k_4e_K^1}$, arc $\widehat{e_K^1e_K^2}$, and line segment
$\overline{e_K^2k_1}$. The asymptotically stable periodic orbit $C_1$ and stable equilibrium $e_K$ are
the only two attractors within region $U$. As described in (\ref{th-2-10}), within the region bounded
by $C_1$, there is an unstable equilibrium $e_\lambda$, which is an isolated invariant set, and outside
periodic orbit $C_1$, there is a saddle $e_\mu$. Besides, there also exist connecting orbits from $e_\mu$
to both periodic orbit $C_1$ and equilibrium $e_K$. Besides, the combination of orbits $\varphi_1(t)$,
$\varphi_2(t)$, and saddle $e_\mu$ are served as the separatrix of region $U$, on each side of which orbits
approach different attractors. Its biological meaning is that different initial states will lead to different
destinies, coexistence, or extinction of predators.


The final one is an SIR compartment model in epidemiology, which was analyzed by Zhou and Fan~\cite{Zhou2012}.
Here the limited medical resources and their supply efficiency were considered, which was inspired by
the treatment functions in~\cite{Wang2006} by Wang and in~\cite{Zhang2008} by Zhang.
Backward bifurcation, a kind of transcritical bifurcation, may occur, and one can also find
bistability here. By convention, the total population is divided into three compartments:
susceptible (S), infected (I), and removed (R). The susceptible can become infected by
touching with the infected individuals, and can then recover naturally or receive treatment,
thus becoming a remover. This model is written as

\begin{equation}\label{eq-3-9}
	\left\{
	\begin{aligned}
		\frac{dS}{dt} &= \Lambda - f(S,I) - dS, \\
		\frac{dI}{dt} &= f(S,I) - ( d + \gamma + \varepsilon )I - h(I), \\
		\frac{dR}{dt} &= \gamma I + h(I) -dR. \\
	\end{aligned}
	\right.
\end{equation}
Here $S(t)$, $I(t)$ and $R(t)$ denote the number of people in the three compartments at time $t$,
respectively. $\Lambda$ denotes the recruitment of the population. $d$ and $\varepsilon$ denote the natural
death rate and disease-related mortality, respectively. $\gamma$ denotes the natural recovery rate.
Saturated type incidence function, $f(S,I)=\frac{\beta SI}{1+\kappa I}$, is considered here since it
can reflect the ``psychological" or inhibition effect~\cite{Capasso1978}. Moreover, the treatment function
$h(I)$ is taking the form of $\frac{\alpha I}{\omega + I}$, which is based on the fact of limited medical
resources and supply efficiency.

The change rates of variables $S$ and $I$ are independent of $R$, so we can just focus on the first two
equations in (\ref{eq-3-9}), leading to

\begin{equation}\label{eq-3-10}
	\left\{
	\begin{aligned}
		\frac{dS}{dt} &= \Lambda - \frac{\beta SI}{1+\kappa I} - dS, \\
		\frac{dI}{dt} &= \frac{\beta SI}{1+\kappa I} - ( d + \gamma + \varepsilon )I - \frac{\alpha I}{\omega + I}. \\
	\end{aligned}
	\right.
\end{equation}

For this model, backward bifurcation can occur when parameters $\omega$ and $\alpha$ are located in
a particular region. For details, one can refer to work~\cite{Zhou2012}. In this situation, there are three
equilibria: one disease-free equilibrium, $e_0$, and two endemic equilibria, $e_1$ and $e_2$, as
shown in  Fig.~\ref{figure-8}(a). By linearization, it can be obtained that $e_0$ and $e_2$ are asymptotically stable,
and $e_1$ is a saddle, which is unstable. Alternatively, by the conclusions in the previous section, one can
also get $e_1$ is a saddle. That is because we can construct a required region $U$ which precisely contains
$e_0$ and $e_2$.

The region $U$ is constructed as shown in Fig.~\ref{figure-8}(b), where the line segments
$\overline{oe_0^1}$ and $\overline{ok}$ are located on
the $x$-axis and $y$-axis, respectively. The major arc $\widehat{e_0^1e_0^2}$ is a segment of the boundary of
a certain attracting neighbor of $e_0$. Line segment $\overline{ke_0^2}$ is located on the line
$S+I=\frac{\Lambda}{d}$, and this is because the derivatives $\frac{d(S+I)}{dt}$ at the points on this line
segment are negative, which means the points on this line segment will go into the interior of $U$
as the time goes on. $e_0$ and $e_2$ are the only two attractors in $U$. Furthermore, the existence of
limit cycles can be excluded~\cite{Zhou2012}, and appropriate parameters can be found to make the model
structurally stable. So by Theorem~\ref{th-2-2} and
the number of equilibria, it can be concluded that $e_1$ must be a saddle. In addition, by Theorem~\ref{th-2-3},
orbits $\varphi_1(t)$ and $\varphi_2(t)$ are located on the two stable manifolds of $e_1$.
They, together with equilibrium $e_1$, constitute the separatrix, as sketched in Fig.~\ref{figure-8}(b).
The orbits above it will approach the endemic $e_2$ as time goes on, whereas
the orbits below it will approach the disease-free equilibrium $e_0$.

\begin{figure}[tbhp]
	\centering
	(a)
	\includegraphics[width=0.45\textwidth]{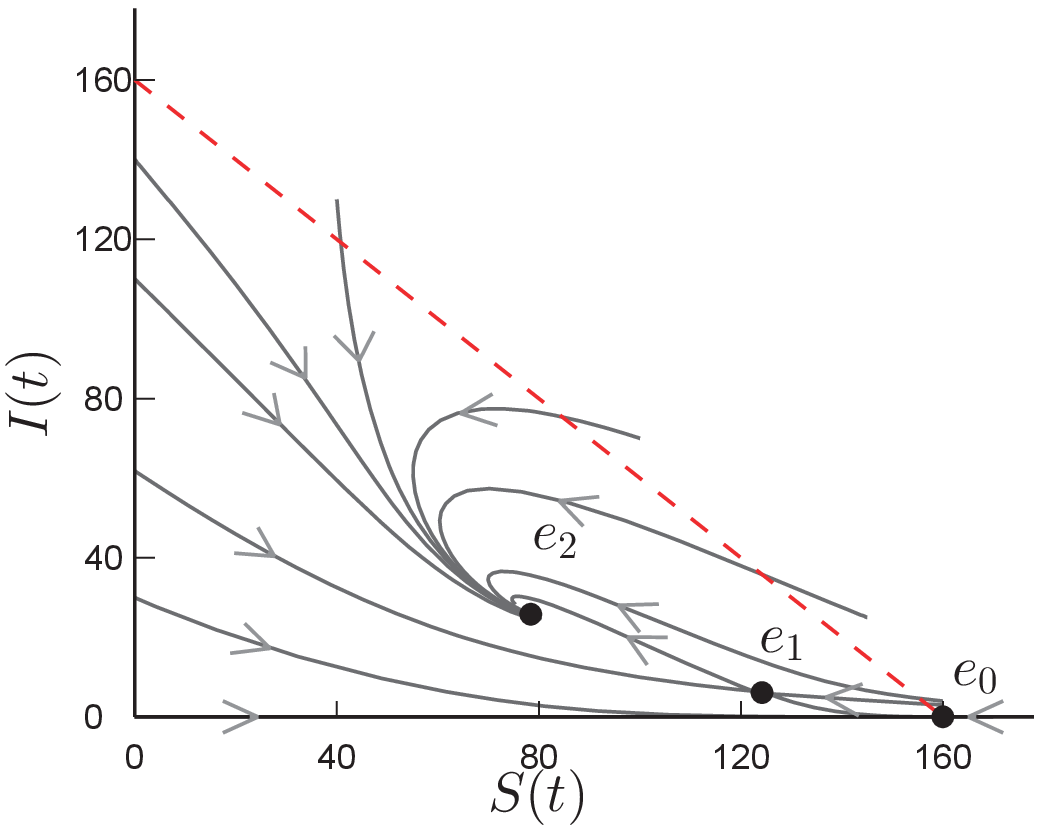}
	(b)
	\includegraphics[width=0.45\textwidth]{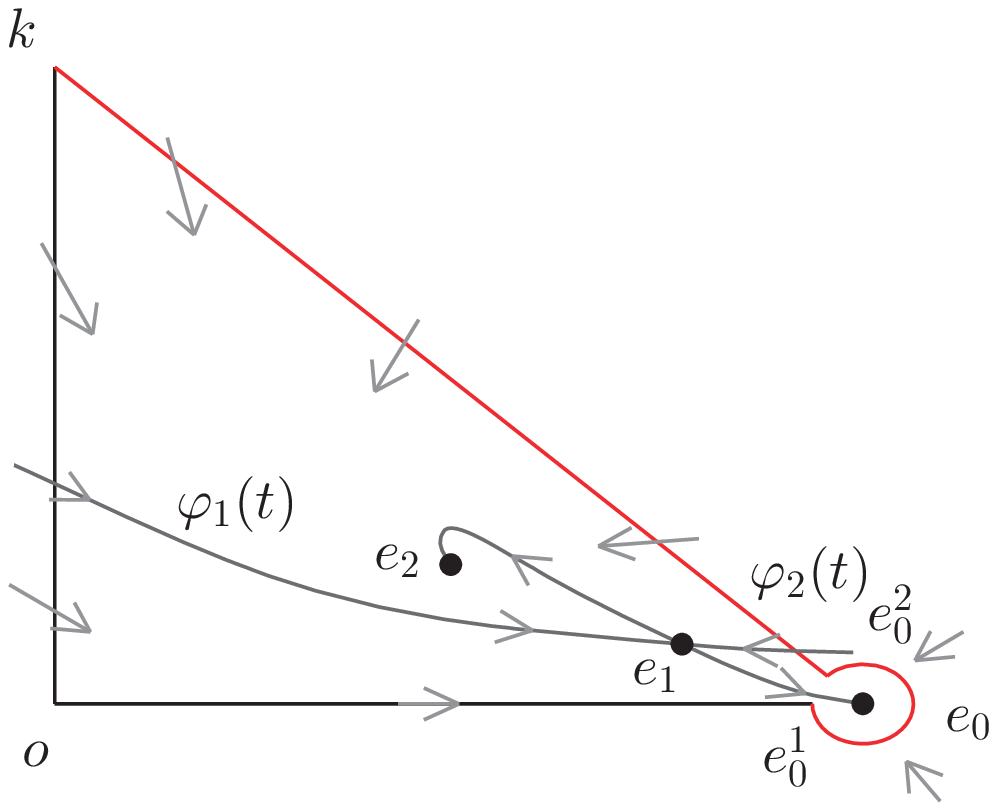}
	\caption{The bistable structure diagram for the model (\ref{eq-3-10}).
		(a)This is the phase portrait of (\ref{eq-3-10}). There are three equilibria,
		$e_0$, $e_1$, and $e_2$, in which $e_0$ and $e_2$ are stable and $e_1$ is a saddle.
		Here $\Lambda=16$, $\beta=0.005$, $\kappa=0.01$, $d=0.1$, $\gamma=0.01$, $\varepsilon=0.02$,
		$\alpha=6$, and $\omega=7$.
		(b)The region $U$ is bounded by two line segments, one orbit section and an arc.
		And the separatrix is composed of two orbits, $\varphi_1$ and $\varphi_2$, and saddle $e_1$.    }
	\label{figure-8}
\end{figure}

\subsection{Applications to a competitive eco-system}
\label{sec-3-1}

Here we consider a competitive model with two species competing for the same food supply
\cite{Murray2002,Strogatz2018,Hirsch2012}, which can exhibit bistability. Instead of analyzing
specific equations, we adopt a different approach; that is, we only consider the qualitative
features to obtain the behavior of the model. This method can give more general conclusions than
analyzing a specific model. At the same time, the difficulty of solving specific equations can be
avoided, which can occur when other detailed factors, such as predators, seasonal effects,
and other sources of food, are taken into account.

First, let $x(t)$ and $y(t)$ denote the populations of the two species at time $t$. For instance,
rabbits and sheep, who are all fed on grass. Thus the model can be written as
\begin{equation}\label{eq-3-1}
	\left\{
	\begin{aligned}
		\frac{dx}{dt} &= M(x,y)x,\\
		\frac{dy}{dt} &= N(x,y)y. \\
	\end{aligned}
	\right.
\end{equation}
Here $M(x,y)$ and $N(x,y)$ are the growth rates of the two species, and both are functions of variable
$x$ and $y$. We consider the following assumptions:

\begin{enumerate}
	\item In the absence of either species, the other would grow to its carrying capacity, $a$ for
	species $x$ and $b$ for $y$. Therefore,
	\begin{equation}
		\begin{array}{c}
			M(x,0)>0 \text{\quad for \quad} x<a \text{\quad and \quad} M(x,0)<0 \text{\quad for \quad} x>a,\\
			N(0,y)>0 \text{\quad for \quad} y<b \text{\quad and \quad} N(0,y)<0 \text{\quad for \quad} y>b.
		\end{array}
	\end{equation}
	\item Because of the interspecies competition, if the population of any species increases, then
	the population of another species will decrease. Thus
	\begin{equation}
		\frac{\partial M}{\partial y} < 0 \text{\quad and \quad} \frac{\partial N}{\partial x} < 0.
	\end{equation}
	\item In addition, if either population is very large, then both the population will decrease. So
	there exists $K>0$ such that
	\begin{equation}
		M(x,y) < 0 \text{\quad and \quad} N(x,y) <0 \text{\quad if \quad} x\geq K \text{\,or\,}y\geq K.
	\end{equation}
\end{enumerate}

\begin{figure}[tbhp]
	\centering
	\includegraphics[width=0.5\textwidth]{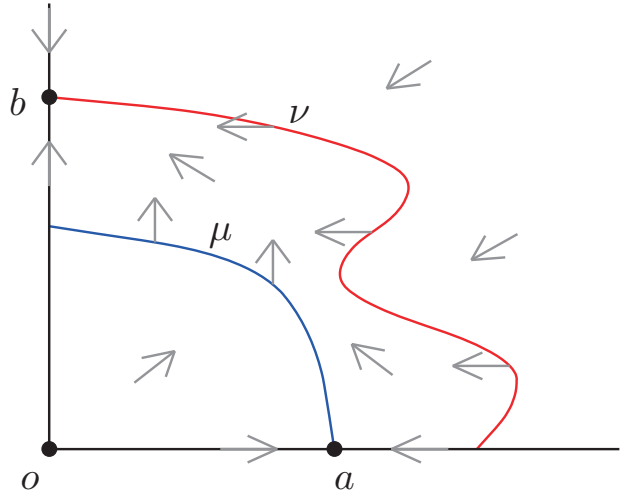}
	\caption{Equilibria and nullclines $\mu$ and $\nu$ for a competitive model.
		Curve $\mu$ corresponds to equation $M(x,y)=0$, and Curve $\nu$ corresponds
		to equation $N(x,y)=0$.}
	\label{figure-3}
\end{figure}

We can see from the model (\ref{eq-3-1}) that no matter what $M(x,y), N(x,y)$ are there always exist
three equilibria: $(0,0)$, $(a,0)$, and $(0,b)$, as shown in Fig.~\ref{figure-3}.
In order to obtain their stability, we compute
the Jacobian:
\begin{equation}
	J \triangleq \left(
	\begin{array}{cc}
		\frac{\partial }{\partial x}(\frac{dx}{dt}) & \frac{\partial }{\partial y}(\frac{dx}{dt}) \\
		\frac{\partial }{\partial x}(\frac{dy}{dt}) & \frac{\partial }{\partial y}(\frac{dy}{dt})
	\end{array}
	\right)
	= \left(
	\begin{array}{cc}
		xM_x + M & xM_y \\
		yN_x & yN_y+N
	\end{array}
	\right),
\end{equation}
where $M_x=\frac{\partial M}{\partial x}$, $M_y=\frac{\partial M}{\partial y}$, and so on.
Then we analyze these equilibria in turn.

$(0,0)$: It is an unstable node. This is because the eigenvalues of matrix $J$ valued at this point are
$M$ and $N$, and both are positive according to assumption 1 above.

$(a,0)$: When it is to the left of $\nu$, it is a saddle. This is because the eigenvalues of matrix $J$
valued at this point are a negative number, $aM_x(a,0)<0$, and a positive number $N(a,0)>0$. However,
when $(a,0)$ is to the right of $\nu$, eigenvalue $N(a,0)$ will be negative. So it is a stable node
in the latter case. In either case, the positive half-axis of $x$ is located on the
stable manifold of $(a,0)$.

$(0,b)$: Similar to $(a,0)$, when $(0,b)$ is below $\mu$, it is a saddle. Conversely, when $(0,b)$
is above $\mu$, it is a stable node. And the positive half-axis of $y$ is located on the stable manifold
of $(0,b)$ correspondingly.

Besides, there can be other equilibrium, say $e$, in the first quadrant, that is,
the intersection point of $\mu$ and $\nu$ if they intersect. One can find that if both
have the negative slope at equilibrium $e$ but $\mu$ is steeper, then it is asymptotically
stable. By implicit differentiation, this condition can also be formulated as
\begin{equation}\label{eq-3-6}
	\text{slope\, of\, } \mu = -\frac{M_x}{M_y} < \text{\,slope\, of\,}\nu = - \frac{N_x}{N_y} <0.
\end{equation}
This can be proved by checking the trace is negative and the determinant is positive for matrix $J$
evaluated at $e$, which means matrix $J_e$ has two negative
eigenvalues. This condition is necessary, as well. Namely, if the intersection point is
asymptotically stable, the equation (\ref{eq-3-6}) must be held at that point.

Next, we begin to study the competitive model (\ref{eq-3-1}) using the results above and the conclusion
in the previous section. Firstly, let the right-hand side of (\ref{eq-3-1}) equal to $0$. Then we get
the $x$-nullclines, $x=0$ and curve $M(x,y)=0$ labeled by $\mu$, and the $y$-nullclines, $y=0$ and
curve $N(x,y)=0$ labeled by $\nu$. We assume that the nullclines can be sketched as (a) in
Fig.~\ref{figure-4}. Based on the above analysis, we obtain that the equilibrium $(0,0)$ labeled $o$
is an unstable node, and equilibrium $(a,0)$ labeled $a$ is a stable node since it is located to the
right of $\nu$. Similarly, the equilibrium $(0,b)$ labeled $b$ is also a stable node. For equilibrium
$c$, the slope of $\nu$ is positive, as can be seen in (b) in  Fig.~\ref{figure-4}, while the slope of
$\mu$ is negative. Thus equilibrium $c$ is unstable.

So far, we have got precisely two stable equilibria, $a$ and $b$. In order to apply the relevant
conclusions in the previous section, we need a region $U$ in which the stable equilibria is exactly
$a$ and $b$. So let us get started dealing with its construction first. Because $a$ is a stable node,
we can find a closed curve surrounding $a$ in its basin of attraction,
such that all the points on this curve will immediately run
into its interior and approach equilibrium $a$ eventually.
Moreover, the two intersections with the
$x-$axis are labeled with $a_1$ and $a_2$, as shown in Fig.~\ref{figure-4}(c). The same
method is applied to equilibrium $b$, and we can get another closed curve and intersections,
$b_1$ and $b_2$. However, a closed curve surrounding $o$ can be found, such that the
internal points, not $o$, will run outside through this curve. The intersections
with the $x-$axis and $y-$axis are labeled with $o_1$ and $o_2$.
By Assumption 3, we know that the direction vector point to the bottom left when the
component $x$ or $y$ is greater than $K$. Then we can find line segments, $\overline{k_1k}$
and $\overline{k_2k}$, to the right of $x = K$ and to the top of $y=K$, respectively, such
that the points on those segments flow to the lower left. The relevant labels
can be seen in  Fig.~\ref{figure-4}(c). Finally, an region bounded by line segments,
$\overline{k_2k}$ and $\overline{k_1k}$, flow sections, $\widetilde{o_1a_1}$,
$\widetilde{k_1a_2}$, $\widetilde{o_2b_1}$, and $\widetilde{k_2b_2}$, and arcs, $\widehat{b_1b_2}$,
$\widehat{0_10_2}$, and $\widehat{a_1a_2}$, are constructed. As shown in Fig.~\ref{figure-4}(d),
this region meets the requirements in \textbf{H1$^\prime$}.

\begin{figure}[tbhp]
	\centering
	(a)
	\includegraphics[width=0.45\textwidth]{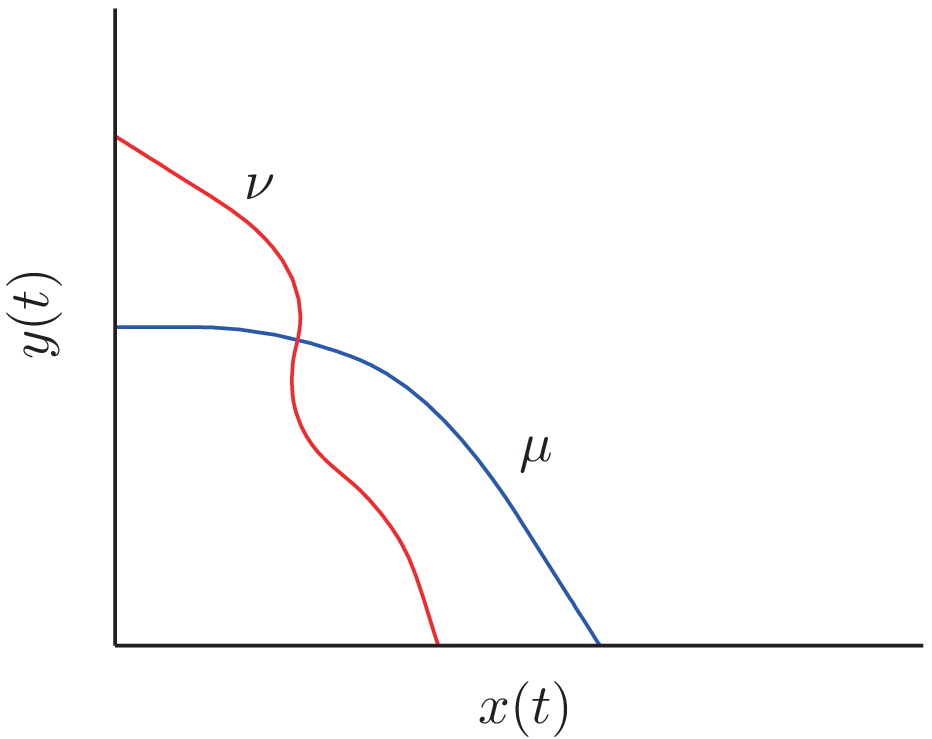}
	(b)
	\includegraphics[width=0.45\textwidth]{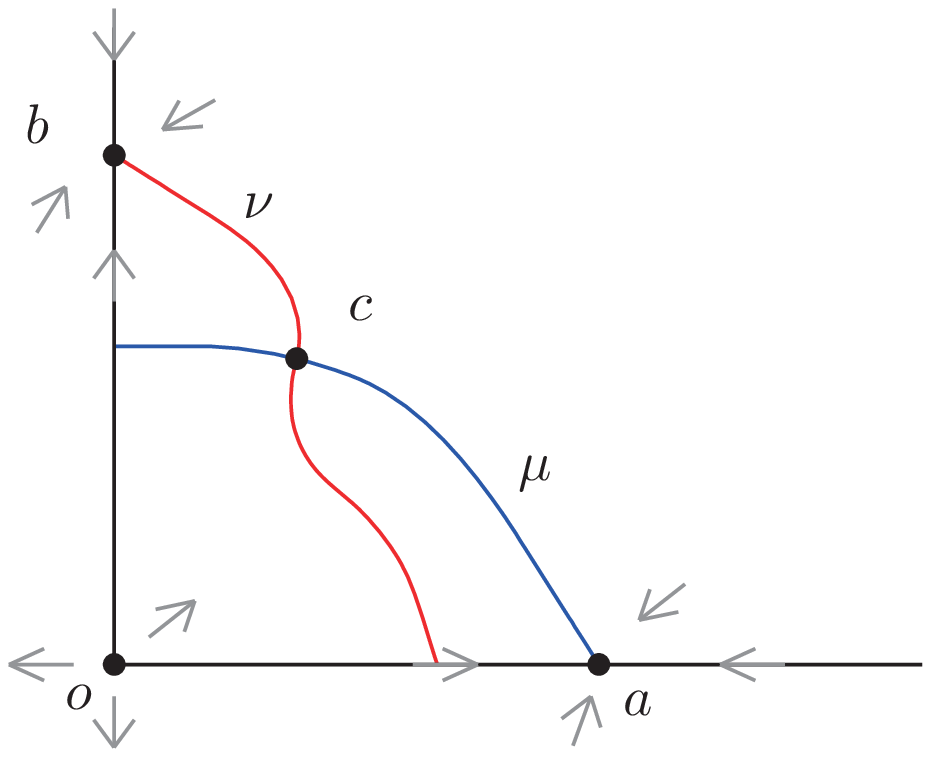}\\
	(c)
	\includegraphics[width=0.45\textwidth]{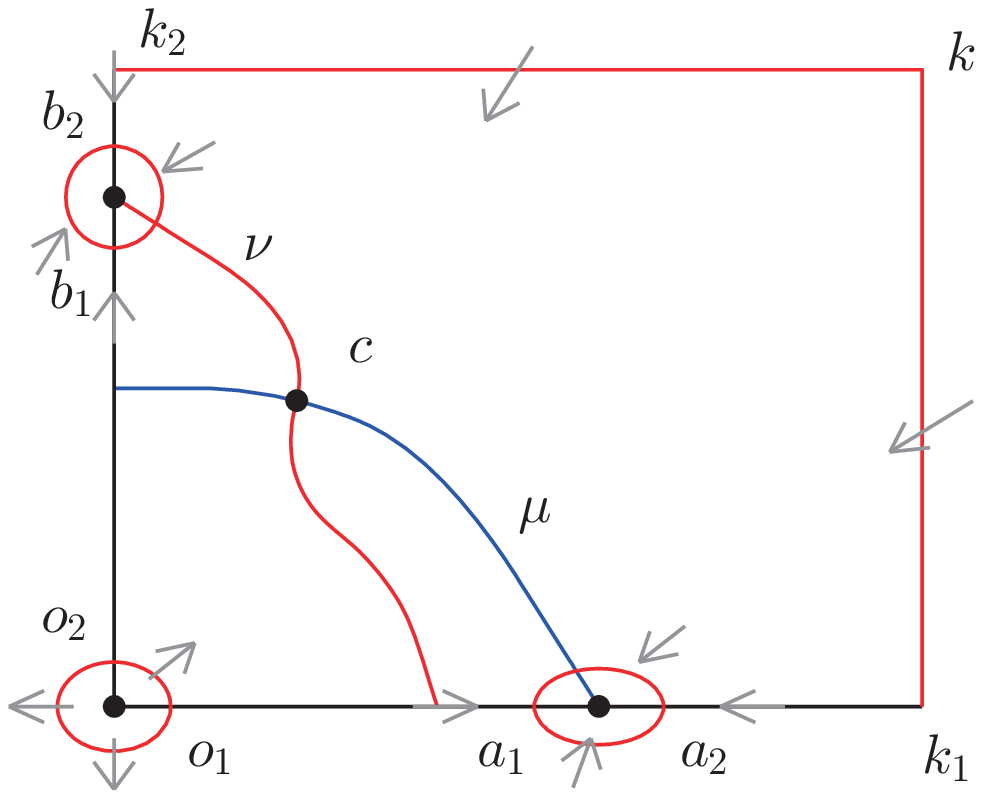}
	(d)
	\includegraphics[width=0.45\textwidth]{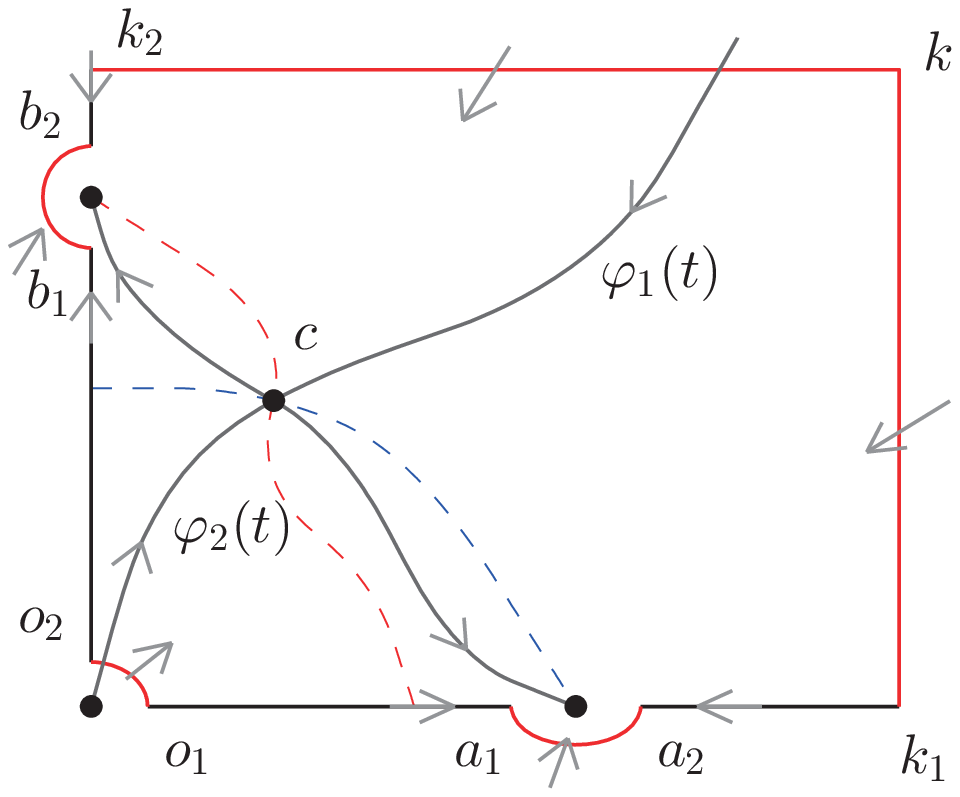}
	\caption{The bistable structure diagram for the competitive model \ref{eq-3-1}.
		(a)The curves $\mu$ and $\nu$ are $x$-nullcline and $y$-nullcline, respectively,
		and they correspond to the equations $M=0$ and $N=0$, respectively.
		(b)There are four equilibria, $a$, $b$, $c$, and $o$, in which $a$ and $b$ are stable
		nodes, and $o$ is an unstable node, and $c$ is unstable.
		(c)The region $U$ required in \textbf{H1$^\prime$} is being constructed.
		(d)The orbits, $\varphi_1$ and $\varphi_2$, together with equilibrium $c$, form the
		separatrix. These two orbits are precisely located on the stable manifold of $c$.}
	\label{figure-4}
\end{figure}

In this case, the model is assumed to be structurally stable, since typically the
eco-system is insensitive to the small disturbance of the external environment, and it is also
impossible to have periodic orbits \cite{Strogatz2018}.  So \textbf{H2-3} is satisfied.
Then by Theorem~\ref{th-2-2}, we obtain that the invariant set $S$, namely the equilibrium point $c$, is a
saddle point. Also, by the comment below Theorem~\ref{th-2-3}, there must be a separatrix, as shown
in Fig.~\ref{figure-4}(d),  which is consists of saddle $c$ and two orbits, $\varphi_1(t)$ and
$\varphi_2(t)$. The orbits to the lower right of the separatrix will flow to equilibrium $a$, while the
orbits on the other side will flow to equilibrium $b$.

Now we consider the competitive model (\ref{eq-3-1}) again, where the nullclines
$\mu$ and $\nu$ are slightly more complicated than the previous one, as shown in
Fig.~\ref{figure-5}(a). The same as before, the origin $o$ is an unstable node,
and equilibrium $b$ is a stable node. However, the differences are that equilibrium $a$
becomes a saddle, and equilibrium $c$ becomes a stable node, which can be obtained
by checking condition (\ref{eq-3-6}). Besides, there are three other equilibria,
$d$, $e$, and $f$, as shown in Fig.~\ref{figure-5}(b).

The construction for region $U$ is also a little different since equilibrium $a$ is
not stable anymore. Because $a$ is a saddle and its stable manifold is on the
positive $x$-axis, there must exist an orbit $\varphi^1(t)$ above the $x$-axis
which comes from the origin and flows towards equilibrium $a$, and then flows upward
along the unstable manifold of $a$, as shown in  Fig.~\ref{figure-5}(c).
Similarly, to the right side of equilibrium $a$, there must exist orbit $\varphi^2(t)$
which flows towards $a$ to the right of $a$ and then flow upward along the unstable
manifold. Near equilibrium $a$, we can find a line segment $\overline{a_1a_2}$, such that
it intersects transversely with $\varphi^1(t)$ at point $a_1$ and with $\varphi^2(t)$
at $a_2$ and with other orbits between them. The point on the $\overline{a_1a_2}$ will leave
the line segment immediately and flow upward. For the other part of the boundary
$\partial U$, the construction method is the same as above. We need to note that,
in the present case, $o_1$ becomes the intersection point of orbit $\varphi^1(t)$ with a closed
curve surrounding $o$, and $k_1$ is the intersection point of orbit $\varphi^2(t)$ with vertical
line segment $\overline{k_1k}$. Now the relevant curves and orbit segments can form a closed
region $U$ required in \textbf{H1$^\prime$}, as shown in  Fig.~\ref{figure-5}(c).

Since there are precisely three unstable equilibria, $d$, $e$, and $f$, it can be
obtained from Theorem~\ref{th-2-2} that these three equilibria must be two
saddles and one repeller. Besides, by analyzing the direction of the vector
field around these three equilibria, we can obtain that there must exist two heteroclinic orbits,
such that both of them come from $e$ and flow towards to $d$ and $f$, respectively.
That is to say, the invariant set $S$ consisting of three equilibria and two heteroclinic orbits is
connected. Thus we can conclude that $e$ is an unstable node, and $d$ and $f$ are saddles.
What is more, by the comment below Theorem~\ref{th-2-3},
there exists a separatrix, which is similar to the above conclusion and is composed
of two orbits, $\varphi_1(t)$ and $\varphi_2(t)$, three equilibria, $d$, $e$, and $f$,
and corresponding connecting orbits. Just like its name, the separatrix divides the
region into two parts, such that the orbits on the boundary will flow into its corresponding
attractor, $b$ or $c$, as shown in  Fig.~\ref{figure-5}(d).

\begin{figure}[tbhp]
	\centering
	(a)
	\includegraphics[width=0.45\textwidth]{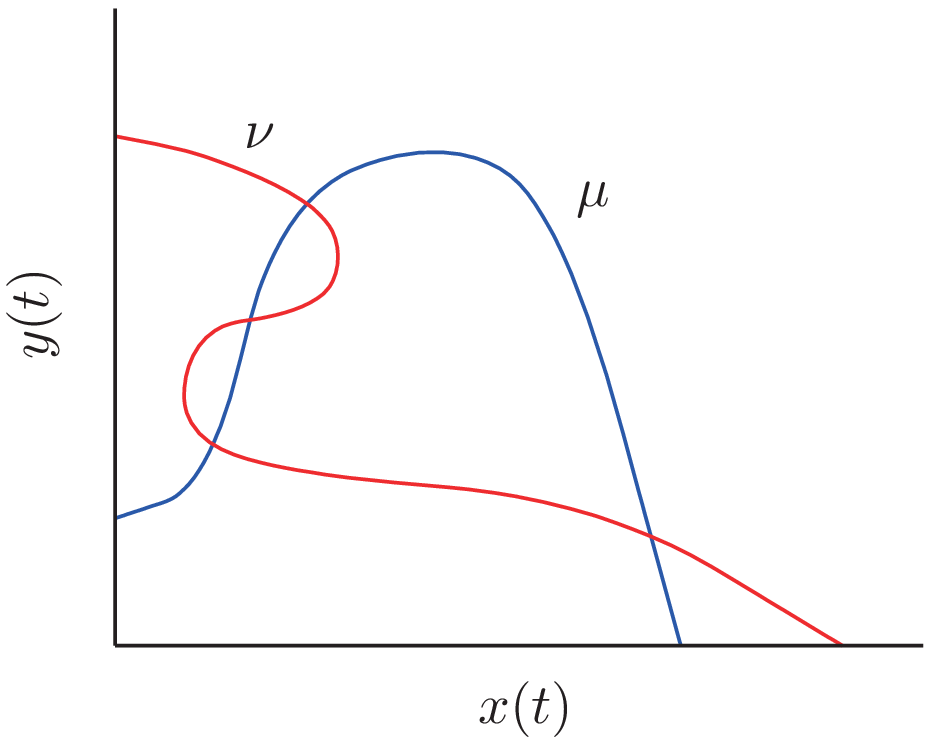}
	(b)
	\includegraphics[width=0.45\textwidth]{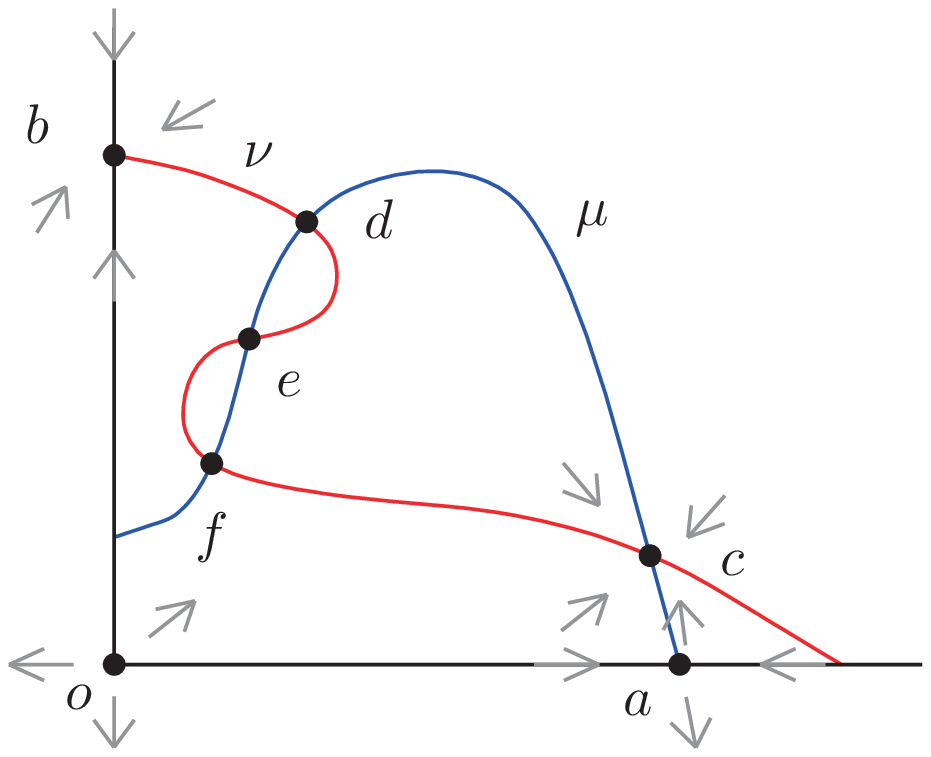}\\
	(c)
	\includegraphics[width=0.45\textwidth]{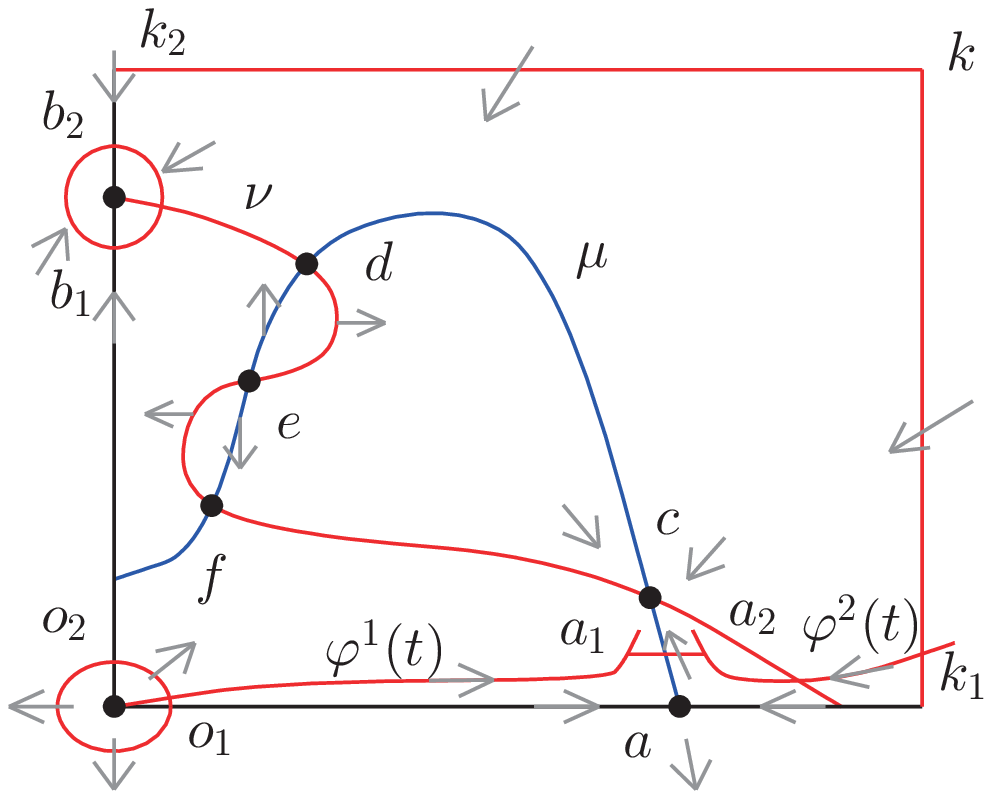}
	(d)
	\includegraphics[width=0.45\textwidth]{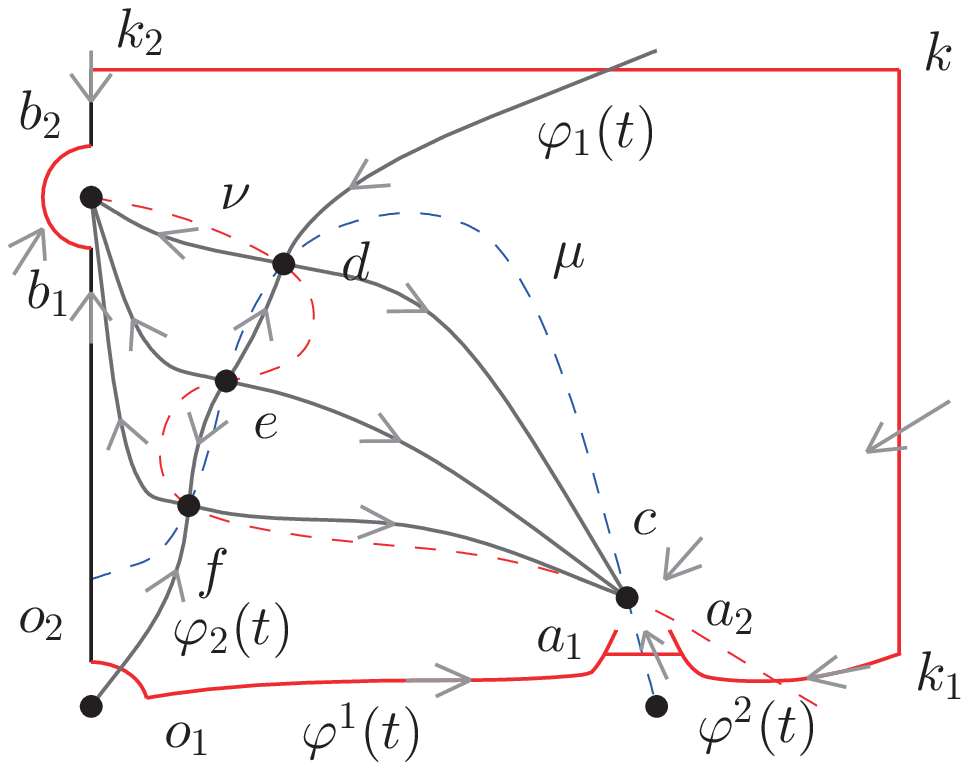}
	\caption{The bistable structure diagram for another competitive model \ref{eq-3-1}.
		(a)The curves $\mu$ and $\nu$ are $x$-nullcline and $y$-nullcline, respectively,
		and they correspond to the equations $M=0$ and $N=0$, respectively.
		(b)There are seven equilibria, in which $o$ is an unstable node, $a$ is a saddle,
		and $b$ and $c$ are stable nodes.
		(c)The region $U$ in \textbf{H1$^\prime$} is being constructed.
		(d)Two orbits, $\varphi_1$ and $\varphi_2$, together with isolated invariant set $S$, form the
		separatrix.}
	\label{figure-5}
\end{figure}

\section{Conclusion and discussion}
\label{sec-4}

Bistability is a widespread phenomenon, whether in daily life or life-related fields. Based on
this observation,  we study two kinds of bistable structures in dynamical systems,
which are very common in the study of biological models. The research method we use is mainly
the Conley index theory, which is a topological tool in dynamical systems. Its
idea is to find an isolated neighborhood first, and then deduce the dynamics of the isolated
invariant set in it by checking the direction of the orbits on the neighborhood boundary.
Besides, if we want to get a more detailed structure of isolated invariant sets, some more
information would be needed. This idea is precisely the strategy of this paper.

In this paper, two kinds of bistable structures with exactly two attractors in an attracting region
are studied. The first is two one-point attractors, and another is the coexistence of a cycle attractor
and a one-point attractor. We prove that in both cases, there are other isolated invariant sets except
two attractors within this attracting region, and there are also connecting orbits from the invariant
set to two attractors, respectively. Besides, if the dynamical system we considered is structurally
stable, we can obtain the requirement for the number of equilibria contained in the invariant set.
More importantly, we find that no matter which case, there is always a separatrix or cycle separatrix
between two attractors. It divides the whole attracting region into two sub-regions so that almost
all the orbits within each sub-region flow to the corresponding attractor. Then, we use the
competitive eco-system to show how to apply the related conclusions to analyze the global
behavior of the model with two attractors. Finally, we briefly analyze four different biological
models to indicate that many models have the bistable structure we considered in this paper,
which are the spruce budworm population model, genetic control model, a predator-prey system
with group defense, and a kind of SIR compartment model with limited medical resources
and supply efficiency.

Finally, there are still challenges related to the bistable structure. The first is the study of
bistable structure in high-dimensional systems because many biological models are built
to be high-dimensional. Thus, the dynamic behavior of orbits here can be very complicated,
which makes it hard to analyze. Besides, researching the bistable structure with a strange
attractor is also tricky, and the strange attractor itself is a tricky object. Also, it is not easy
to study a multistable structure, which is likely to appear in biological models due to the
complexity and diversity of the corresponding biological system.

\section*{Acknowledgements}
\label{sec-5}

This work was jointly supported by the NSFC grants under Grant Nos. 11572181 and 11331009.
We would like to thank Prof. Ping Ao and Dr. Shuang Chen for their helpful advice and discussions.
This work was partly done during Junbo Jia's visit to LAMPS at York University, and he would also
like to thank all members of LAMPS for their support, guidance, and companionship.



\begin{appendix}
	\section{Appendix: Some related known results on the Conley index}\label{sec-appendix}
	
	For the convenience of readers, here we briefly list the definition of the Conley index and related notions.
	For more details about the Conley index, please refer to~\cite{Conley1978,Smoller2012,Mischaikow2002}.

	Let $\varphi$ is a flow on $X$ a locally compact metric space.
	\begin{definition}
		A compact set $N\subset X$ is an \emph{isolating neighborhood} if
		\begin{equation*}
			\mathrm{Inv}(N, \varphi) := \{x\in N | \varphi(\mathbb{R},x) \subset N \} \subset \mathrm{int} N,
		\end{equation*}
		where $\mathrm{int} N$ denotes the interior of $N$. $S$ is an \emph{isolated invariant set} if $S=\mathrm{Inv}(N)$
		for some isolating neighborhood $N$.
	\end{definition}
	
	\begin{definition}\label{def-1}
		Let $S$ be an isolated invariant set. A pair of compact sets $(N, L)$ where $L\subset N$ is called
		an \emph{index pair} for $S$ if:
		\begin{enumerate}
			\item $S=\mathrm{Inv}(\mathrm{cl}(N\setminus L))$ and $N \setminus L$ is a neighborhood of $S$.
			\item $L$ is \emph{positively invariant} in $N$; that is given $x\in L$ and $\varphi([0,t],x)\subset N$, then $\varphi([0,t],x)\subset L$.
			\item $L$ is an \emph{exit set} for $N$; that is given $x\in N$ and $t_1 >0$ such that $\varphi(t_1,x)\notin N$, then there exists $t_0\in [0,t_1]$ for which $\varphi([0,t_0], x)\subset N$ and $\varphi(t_0,x)\in L$.
		\end{enumerate}
	\end{definition}
	
	\begin{definition}
		The \emph{homotopy Conley index} of $S$ is
		\begin{equation*}
			h(S) = h(S, \varphi) \sim (N/L, [L]).
		\end{equation*}
		
	\end{definition}
	
	Where ``$\sim$" denotes homotopy equivalence relation, and $(N/L, [L])$ is pointed space, namely, the points in $L$ are identical.
	We also need to point out that the Conley index in this paper refers to the homotopy Conley index.
	
	In addition, we list some useful properties of the Conley index below as well,
	including three theorems on its applications, which can also show that our results obtained in
	this paper are important applications of the Conley index.
	
	\begin{property}[Wa\.zewski Property]\label{th-1}
		If $N$ is an isolating neighborhood for the isolated invariant set $S$, and if $H(S)\neq \overline{0}$,
		then $S\neq \emptyset$; i.e., $N$ contains a complete orbit.
	\end{property}

	\begin{property}[Wedge Sum]\label{th-2}
		If $S_1$ and $S_2$ are disjoint isolated invariant sets, then the disjoint union $S=S_1 \sqcup S_2$ is
		an isolated invariant set, and
		\begin{equation}\label{eq-1}
			H(S)=H(S_1 \sqcup S_2)=H(S_1)\vee H(S_2).
		\end{equation}
	\end{property}
	
	\begin{property}[Continuation]\label{th-3}
		If $S_\lambda$ and $S_\mu$ are related by continuation, then they have the same Conley index.
	\end{property}
	
	\begin{theorem}\cite{Smoller1972,Smoller2012,Kappos1995}
		Let $\mathbf{x}'=f(\mathbf{x})$ be gradient-like in an isolated neighborhood $N$, and
		let $N$ contain precisely two rest points $x_1$, $x_2$, of $f$, not both of which are
		degenerate. Let $S(N)$ be the maximal invariant set in $N$. If $H(S(N))=\overline{0}$,
		then there is an orbit of $f$ connecting the two rest points.
	\end{theorem}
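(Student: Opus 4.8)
The plan is to argue by contradiction using the Wedge Sum Property~\ref{th-2}. Suppose there is no orbit of $f$ connecting $x_1$ and $x_2$. Because the flow is gradient-like it carries a Lyapunov function strictly decreasing along every nonconstant orbit; in particular there are no periodic orbits and no homoclinic orbits, and every point of the compact invariant set $S(N)$ has both its $\alpha$- and $\omega$-limit sets contained in the rest-point set $\{x_1,x_2\}$. Hence any $y\in S(N)$ that is not a rest point lies on an orbit whose forward and backward limits are among $x_1,x_2$; a homoclinic loop being excluded, such an orbit would connect $x_1$ to $x_2$. Under the standing assumption no such orbit exists, so as sets $S(N)=\{x_1\}\sqcup\{x_2\}$.

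Next I would promote this set-theoretic splitting to a splitting of isolated invariant sets. Since $x_1$ and $x_2$ are distinct rest points and no orbit runs between them, they are dynamically separated, so each singleton $\{x_i\}$ is itself an isolated invariant set and $S(N)$ is their disjoint union. Property~\ref{th-2} then gives
\begin{equation*}
	\overline{0}=H(S(N))=H(\{x_1\})\vee H(\{x_2\}).
\end{equation*}

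The crux is to exhibit a nontrivial summand on the right. By hypothesis at least one rest point, say $x_1$, is nondegenerate, hence hyperbolic, and its Conley index is the pointed $k$-sphere $\Sigma^{k}$ with $k=\dim W^{u}(x_1)$ its Morse index; in particular $H(\{x_1\})\neq\overline{0}$. A wedge sum of (well-pointed) spaces is contractible only when both factors are, since each factor is a retract of the wedge; equivalently, passing to the cohomological Conley index turns $\vee$ into a direct sum, so the total cohomology cannot vanish while $H(\{x_1\})=\Sigma^{k}$ contributes a nonzero group in degree $k$. This contradicts $H(S(N))=\overline{0}$, and the contradiction forces a connecting orbit to exist.

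The main obstacle is the structural step, not the algebra: one must be certain that the gradient-like hypothesis genuinely collapses $S(N)$ to the two rest points plus connections, with no hidden recurrence and no orbit limiting to anything other than a rest point — this is exactly where the Lyapunov function and the compactness of $S(N)$ are used. The clause ``not both degenerate'' is equally essential and enters here: a degenerate rest point may carry the trivial index $\overline{0}$ (as a shunt-type equilibrium does), so the nonvanishing summand that drives the contradiction must be supplied by the nondegenerate point. Finally, although the statement asserts only existence, the same Lyapunov function orients the connection, running from the rest point of higher index down to the one of lower index.
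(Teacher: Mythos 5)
The paper states this theorem without proof, citing Smoller--Conley and Kappos, and your argument is precisely the classical proof found in those sources (no connecting orbit $\Rightarrow$ $S(N)=\{x_1\}\sqcup\{x_2\}$ via the Lyapunov function, then the Wedge Sum Property~\ref{th-2} together with the nontrivial index $\Sigma^{k}$ of the nondegenerate rest point contradicts $H(S(N))=\overline{0}$); it is correct. The only clause worth tightening is ``nondegenerate, hence hyperbolic'': under the convention that nondegeneracy only means $df(x_1)$ is nonsingular this implication can fail, but the argument needs nothing more than $H(\{x_1\})=\Sigma^{k}\neq\overline{0}$, which is exactly what the cited references establish for a nondegenerate rest point of a gradient-like flow.
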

	
	\begin{theorem}\cite{Conley1971,Kappos1995}
		Let $N$ be an isolating neighborhood of the flow $\phi$ containing precisely two equilibria $p_1$ and $p_2$.
		If $H(S(N)) \neq H(p_1) \vee H(p_2)$, then there exists an orbit $\gamma$ of $\phi$ in $N$, different
		from $p_1$ and $p_2$. If, moreover, $\phi$ is also gradient-like in $N$, then $\gamma$
		connects $p_1$ and $p_2$.
	\end{theorem}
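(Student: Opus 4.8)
The plan is to prove the two assertions separately: the existence of an extra orbit follows by contraposition from the Wedge Sum Property~\ref{th-2}, and the gradient-like refinement follows from a standard limit-set analysis using a Lyapunov function.

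For the first assertion I would argue by contradiction. Suppose the only complete orbits contained in $N$ are the two equilibria themselves, so that the maximal invariant set satisfies $S(N) = \{p_1\} \sqcup \{p_2\}$. Since $p_1 \neq p_2$, the singletons $\{p_1\}$ and $\{p_2\}$ are disjoint isolated invariant sets, each isolated by a small neighborhood inside $N$. Then the Wedge Sum Property~\ref{th-2} gives $H(S(N)) = H(\{p_1\} \sqcup \{p_2\}) = H(p_1) \vee H(p_2)$, which directly contradicts the hypothesis $H(S(N)) \neq H(p_1) \vee H(p_2)$. Hence $S(N)$ must contain a complete orbit $\gamma$ distinct from $p_1$ and $p_2$; being a full orbit lying in $N$, it belongs to $S(N) = \mathrm{Inv}(N)$.

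For the second assertion, assuming $\phi$ is gradient-like in $N$, I would upgrade this $\gamma$ to a connecting orbit via the Lyapunov function. Let $V$ be a continuous function on $N$ that strictly decreases along every non-constant orbit segment. Since $\gamma \subset S(N) \subset N$ lies in a compact invariant region, its $\alpha$- and $\omega$-limit sets are nonempty, compact, connected, invariant, and contained in $S(N)$. On any such limit set $V$ is constant, so strict monotonicity forbids any non-constant orbit there; hence each limit set consists entirely of rest points. As $N$ contains precisely the two (isolated) equilibria $p_1, p_2$, and a nonempty connected set of isolated rest points is a single point, each of $\alpha(\gamma)$ and $\omega(\gamma)$ equals $p_1$ or $p_2$. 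Finally, strict decrease of $V$ along $\gamma$ forces $V(\alpha(\gamma)) > V(\omega(\gamma))$, so the two endpoints cannot coincide (which also excludes a homoclinic loop); therefore $\{\alpha(\gamma), \omega(\gamma)\} = \{p_1, p_2\}$ and $\gamma$ connects the two equilibria.

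The main obstacle is the second part: rigorously collapsing the limit sets to single equilibria. This rests on the standard but nontrivial facts that limit sets of orbits in a compact invariant region are nonempty, compact, connected and invariant, and that on a gradient-like flow a limit set carries a constant value of $V$ and hence contains only rest points. Given that there are exactly two equilibria in $N$, connectedness then reduces each limit set to a single point, and the strict-decrease property simultaneously distinguishes the two endpoints and rules out a homoclinic orbit, which closes the argument.
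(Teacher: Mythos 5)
Your proof is correct, but note that the paper itself offers no proof of this statement: it appears in Appendix~\ref{sec-appendix} as a quoted classical result, cited to Conley--Smoller and Kappos, so there is no internal argument to compare against. What you have written is essentially the standard proof from that literature, and both halves are sound. In the first half, the contradiction via the Wedge Sum Property~\ref{th-2} works because under the negation $S(N)=\{p_1\}\sqcup\{p_2\}$ each singleton really is an isolated invariant set: taking small disjoint compact neighborhoods $N_i\subset \mathrm{int}\,N$ of $p_i$ gives $\mathrm{Inv}(N_i)\subset \mathrm{Inv}(N)=\{p_1,p_2\}$, hence $\mathrm{Inv}(N_i)=\{p_i\}$, so $H(p_i)$ is defined and the wedge-sum identity applies, contradicting the hypothesis; any point of $S(N)\setminus\{p_1,p_2\}$ then lies on a complete orbit in $N$ distinct from the equilibria. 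In the second half, the only step you flag as an obstacle is in fact routine and worth spelling out: since $\gamma\subset S(N)$ and $S(N)$ is compact, $V(\gamma(t))$ is monotone and bounded, so it converges as $t\to\pm\infty$; continuity of $V$ then forces $V$ to be constant on each limit set, invariance of the limit set plus strict decrease along non-constant orbits forces it to consist of rest points, and connectedness collapses it to a single $p_i$. The strict inequality $\lim_{t\to-\infty}V(\gamma(t))\geq V(\gamma(-1))>V(\gamma(1))\geq\lim_{t\to+\infty}V(\gamma(t))$ then distinguishes $\alpha(\gamma)$ from $\omega(\gamma)$, excluding a homoclinic loop exactly as you say. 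No gap remains.
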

	
	\begin{theorem}\cite{Salamon1985,Kappos1995}
		If $(A_\mu, A_\mu^*)$ is an attractor-repeller pair for the i.i.s. $S_\mu$,
		which continues for $\mu\in [\mu_1,\mu_2]\subset R$ and
		\[S_mu_1=A_{\mu_1} \bigcup A_{\mu_2}^*,S_{\mu_2}=A_{\mu_2} \bigcup A_{\mu_2}^*,\]
		but $CSS(S_{\mu_1})$ is not the same as $CSS(S_{\mu_1})$, then, for
		some $\mu \in [\mu_1,\mu_2]$, there exists a connecting orbit
		from $A_\mu^*$ to $A_\mu$.
	\end{theorem}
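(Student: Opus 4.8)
The plan is to argue by contradiction, turning the hypothesis on the connected simple systems into a statement that is incompatible with continuation once connecting orbits are forbidden. So first I would assume that for every $\mu \in [\mu_1,\mu_2]$ there is \emph{no} connecting orbit from $A_\mu^*$ to $A_\mu$. Since $(A_\mu, A_\mu^*)$ is an attractor--repeller pair of $S_\mu$, the absence of connecting orbits forces $S_\mu = A_\mu \sqcup A_\mu^*$ for \emph{all} $\mu$, not merely at the two endpoints. The aim is then to show that this global splitting makes $CSS(S_\mu)$ constant in $\mu$, contradicting the assumption that $CSS(S_{\mu_1})$ and $CSS(S_{\mu_2})$ differ.

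Next I would pass to the parametrized (product) flow on $X \times [\mu_1,\mu_2]$ in which $\mu$ is held constant along orbits. The continuation hypothesis guarantees that the set $\mathcal{S} = \bigcup_{\mu} S_\mu \times \{\mu\}$ is an isolated invariant set for this flow, with $\mathcal{A} = \bigcup_\mu A_\mu \times\{\mu\}$ an attractor and $\mathcal{A}^* = \bigcup_\mu A_\mu^* \times\{\mu\}$ its dual repeller. Because every orbit of the product flow stays inside a single slice, the set of connecting orbits of $(\mathcal{A}, \mathcal{A}^*)$ is exactly the union over $\mu$ of the slice-wise connecting sets, which is empty under the contradiction hypothesis. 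Hence $\mathcal{S} = \mathcal{A} \sqcup \mathcal{A}^*$, and since $\mathcal{A}$ and $\mathcal{A}^*$ are compact and disjoint they admit disjoint isolating neighborhoods. By the Wedge Sum Property~\ref{th-2}, and more finely at the level of connected simple systems, the index of $\mathcal{S}$ splits as $CSS(\mathcal{S}) = CSS(\mathcal{A}) \vee CSS(\mathcal{A}^*)$.

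Finally I would restrict to the two endpoint slices. Within $\mathcal{S}$ the slices $S_{\mu_1}$ and $S_{\mu_2}$ are related by continuation (with $\mu$ playing the role of the continuation parameter), and the same holds for the attractor slices $A_{\mu_1}, A_{\mu_2}$ and the repeller slices $A_{\mu_1}^*, A_{\mu_2}^*$. Since the connected simple system is invariant under continuation (Property~\ref{th-3}), evaluating the split formula at each endpoint gives $CSS(S_{\mu_1}) = CSS(A_{\mu_1}) \vee CSS(A_{\mu_1}^*) = CSS(A_{\mu_2}) \vee CSS(A_{\mu_2}^*) = CSS(S_{\mu_2})$, contradicting the assumption that these differ. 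Therefore a connecting orbit from $A_\mu^*$ to $A_\mu$ must exist for at least one $\mu \in [\mu_1,\mu_2]$.

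The hard part will be the connected-simple-system bookkeeping rather than the dynamics: one must justify that the wedge decomposition holds not just for the homotopy Conley index but at the finer level of connected simple systems (so that the gluing data is genuinely recorded), and that $CSS$ is functorial with respect to slicing the product flow, so that the endpoint evaluations and the continuation identifications are mutually compatible. A secondary technical point is verifying that $\mathcal{S}$ is genuinely isolated, that is, that the isolating neighborhoods can be chosen uniformly over the compact interval $[\mu_1,\mu_2]$, which is exactly where the continuation hypothesis does its work; the otherwise awkward possibility that connecting orbits appear and then disappear along the interval is neatly avoided by the global contradiction hypothesis.
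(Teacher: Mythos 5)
The paper does not prove this statement: it is quoted verbatim (typos included) from the cited sources \cite{Salamon1985,Kappos1995} as background material in the appendix, so there is no in-paper proof to compare against. Your contradiction argument is, however, essentially the standard proof from those references: assume no connecting orbit for any $\mu$, so that $S_\mu = A_\mu \sqcup A_\mu^*$ throughout; conclude that the connected simple system splits as a sum and is therefore constant along the continuation, contradicting $CSS(S_{\mu_1}) \neq CSS(S_{\mu_2})$. You also correctly isolate the two points that carry the real weight: (a) that the splitting must be established at the level of connected simple systems, not merely of the homotopy index (this is exactly what Salamon's framework provides -- the $CSS$ records the connection data that the wedge of homotopy types forgets), and (b) that under the global no-connection hypothesis the attractor and repeller each admit disjoint isolating neighborhoods varying over the compact interval, so that $A_\mu$ and $A_\mu^*$ individually continue and their $CSS$'s are continuation invariants. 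Granting the cited machinery for those two points, the outline is sound and is the intended argument.
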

	
\end{appendix}

\end{document}